\documentclass{amsart}

\usepackage{amsfonts}
\usepackage{amssymb}
\usepackage{epsfig, color}
\newcommand{\N}{{\mathbb N}}
\newcommand{\Z}{{\mathbb Z}}

\newcommand{\ga}{$\Gamma (G, \mathcal{X} \cup \mathcal H)$}

\newcommand{\Hl}{\{ H_\lambda \}_{\lambda \in \Lambda}}
\newcommand{\cH}{\mathcal{H}}
\newcommand{\cX}{\mathcal{X}}
\newcommand{\cW}{\mathcal{W}}

\newcommand{\R}{{\mathbb R}}
\newcommand{\cc}{\rm ($2$CC)}
\newcommand{\ncc}{\rm ({\it n}CC)}
\newcommand{\lab}{{\rm Lab}}
\renewcommand{\L}{{\rm L}}

\theoremstyle{plain}
\newtheorem{lemma}{Lemma}[section]
\newtheorem{cor}[lemma]{Corollary}
\newtheorem{thm}[lemma]{Theorem}

\theoremstyle{definition}
\newtheorem*{df}{Definition}
\newtheorem{example}{Example}
\newtheorem{quest}{Question}

\theoremstyle{remark}
\newtheorem{remark}[lemma]{Remark}

\makeatletter
\@addtoreset{equation}{section} 
\makeatother

\begin{document}
\title[Groups with finitely many conjugacy classes]{Groups with finitely many conjugacy classes and their automorphisms}

\author{Ashot Minasyan}
\address{
School of Mathematics,
University of Southampton, Highfield, Southampton, SO17 1BJ, United
Kingdom.}
\email{aminasyan@gmail.com}

\begin{abstract} We combine classical methods of combinatorial group theory with the theory of small
cancellation over relatively hyperbolic groups to construct finitely generated torsion-free groups that
have only finitely many classes of conjugate elements. Moreover, we present several results concerning
embeddings into such groups.

As another application of these techniques, we prove that every countable group $C$ can be realized as a group of outer automorphisms
of a group $N$, where $N$ is a finitely generated group having Kazhdan's property (T)
and containing exactly two conjugacy classes.
\end{abstract}
\thanks{This work was supported by the Swiss
National Science Foundation Grant $\sharp$~PP002-68627.}
\keywords{Conjugacy Classes, Relatively Hyperbolic Groups, Outer Automorphism Groups}
\subjclass[2000]{20F65, 20E45, 20F28}

\maketitle

\section{Introduction}
We shall start with
\begin{df} Suppose that $n \ge 2$ is an integer. We will say that a group $M$ has the property {\ncc} if
there are exactly $n$ conjugacy classes of elements in $M$.
\end{df}

Note that a group $M$ has {\cc} if and only if any two non-trivial elements are conjugate in $M$.
For two elements $x,y $ of some group $G$, we shall write $x \stackrel{G}{\sim} y$ if $x$ and $y$ are conjugate in $G$,
and $x \stackrel{G}{\nsim} y$ if they are not.

For a group $G$, denote by $\pi(G)$ the set of all finite orders of elements of $G$.
A classical theorem of G. Higman, B. Neumann and H. Neumann (\cite{HNN}) states that every countable group $G$ can be
embedded into a countable (but infinitely generated) group $M$, where any two elements of the same order are conjugate and
$\pi(M)=\pi(G)$.

For any integer $n\ge 2$, take $G=\Z/2^{n-2}\Z$ and embed $G$ into a countable group $M$ according to the theorem above.
Then $card(\pi(M))=card(\pi(G))=n-1$. Since, in addition, $M$ will always contain an element of infinite order, the theorem of
Higman-Neumann-Neumann implies that $G$ has {\ncc}.

Another way to construct infinite groups with finitely many conjugacy classes was suggested by S. Ivanov \cite[Thm. 41.2]{Olsh0}, who showed
for every sufficiently large prime $p$ there is an infinite $2$-generated group $M_p$ of exponent $p$ possessing exactly $p$ conjugacy classes.
The group $M_p$ is constructed as a direct limit of word hyperbolic groups, and, as noted in \cite{Osin-SCT}, it is impossible to
obtain an infinite group with {\cc} in the same manner.

In the recent paper \cite{Osin-SCT} D. Osin developed a theory of small cancellation over relatively hyperbolic groups and
used it to obtain the following remarkable result:

\begin{thm}[\cite{Osin-SCT}, Thm. 1.1]\label{thm:osin's-main} Any countable group $G$ can be embedded into a
$2$-generated group $M$ such that any two elements of the same
order are conjugate in $M$ and $\pi(M)=\pi(G)$.
\end{thm}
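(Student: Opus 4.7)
The plan is to realize $M$ as the direct limit of a chain of $2$-generated groups $G_0 \to G_1 \to G_2 \to \cdots$, each containing $G$, and each obtained from the previous by identifying one additional pair of equal-order conjugacy classes. First I would embed $G$ into a $2$-generated group $G_0$ with $\pi(G_0) = \pi(G)$. This is classical: via an iterated Higman--Neumann--Neumann style construction one embeds $G$ into a finitely generated group preserving the set of finite orders, and then reduces to two generators by a further HNN extension, again preserving $\pi$. One may regard $G_0$ as hyperbolic relative to the peripheral subgroup $G$ itself, providing the starting relatively hyperbolic structure for the induction.

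Fix a diagonal enumeration $\{(u_n,v_n)\}_{n\in\N}$ of all pairs of words in the two fixed generators. Inductively assume that $G_n$ is nontrivial, $2$-generated, contains $G$, satisfies $\pi(G_n)=\pi(G)$, and is hyperbolic relative to some collection $\cH_n$ of subgroups containing the image of $G$. Let $u,v$ denote the images of $u_n,v_n$ in $G_n$. If they have distinct orders or are already conjugate, set $G_{n+1}=G_n$. Otherwise, I would form $G_{n+1}$ as the quotient of $G_n$ by a carefully chosen family $\cW_n$ of relators whose combined effect is to impose $tut^{-1}=v$ for a suitable element $t\in G_n$. The key input is Osin's small cancellation theorem over relatively hyperbolic groups: for sufficiently long and generic $\cW_n$ (chosen to avoid short common pieces with existing relators and with the peripheral subgroups), the quotient $G_{n+1}$ is still nontrivial, $2$-generated, hyperbolic relative to the natural images of $\cH_n$, contains $G$ faithfully, and crucially satisfies $\pi(G_{n+1})=\pi(G_n)$, because every finite-order element of such a small cancellation quotient is conjugate to the image of a finite-order element of $G_n$.

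Setting $M := \varinjlim_n G_n$ then produces a $2$-generated group containing $G$ with $\pi(M)=\pi(G)$; the diagonal enumeration ensures that any two equal-order elements of $M$ lie in some common $G_n$ and are conjugated at a later stage, so they are conjugate in $M$. The main obstacle is the inductive small cancellation step, where one must simultaneously arrange (i) Osin's small cancellation hypotheses on $\cW_n$, (ii) injectivity of $G_n \to G_{n+1}$ on the peripheral subgroups, and in particular on the copy of $G$, (iii) preservation of $\pi$, and (iv) preservation of a workable relatively hyperbolic structure to fuel the next iteration. Each of these is a direct consequence of the structure theorems provided by Osin's small cancellation theory over relatively hyperbolic groups, granted that the defining words in $\cW_n$ are chosen long and generic enough; all the real analytic work is absorbed into verifying the small cancellation conditions at every inductive stage.
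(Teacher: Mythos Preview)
This theorem is not proved in the present paper; it is quoted from Osin \cite{Osin-SCT} as background, so there is no proof here to compare against directly. Nonetheless, your sketch has a genuine structural gap at the very first step. You propose to embed $G$ into a $2$-generated group $G_0$ via the classical Higman--Neumann--Neumann construction and then assert that ``one may regard $G_0$ as hyperbolic relative to the peripheral subgroup $G$.'' This is unjustified and in general false: the classical HNN embedding into a $2$-generated group uses HNN extensions with infinitely generated associated subgroups, which are not covered by the known preservation results for relative hyperbolicity (compare Lemma~\ref{lem:HNN-rel_hyp}, which requires the associated subgroup $K$ to be finitely generated). Without a relatively hyperbolic structure on $G_0$, none of the small cancellation machinery you invoke afterwards can get off the ground.

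Osin's actual strategy (visible in this paper through the proofs of Lemma~\ref{lem:fg-emb} and Theorem~\ref{thm:ext-main}) reverses the order of operations. One begins with $G(0)=G*F_2$, which \emph{is} hyperbolic relative to $\{G\}$ by Remark~\ref{rem:free_prod_rel_hyp}, and in which $F_2$ is a suitable subgroup. Two-generation is not arranged at the outset; it emerges in the direct limit because each application of Theorem~\ref{thm:main_SCT} pushes one more generator into the image of $F_2$. Furthermore, the step ``impose $tut^{-1}=v$ for a suitable $t\in G_n$ by choosing long generic relators $\cW_n$'' conflates two distinct moves. One does not write down a single relator $tut^{-1}v^{-1}$ with $t$ already in $G_n$ and hope it satisfies small cancellation; instead one first enlarges the peripheral structure so that $\langle u\rangle$ (or its elementary closure) becomes a peripheral subgroup (Lemma~\ref{lem:Eg}), forms an HNN extension introducing a \emph{new} stable letter $t$ with $tut^{-1}=v$, checks that this extension remains relatively hyperbolic (Lemma~\ref{lem:HNN-rel_hyp}), and only then applies Theorem~\ref{thm:main_SCT} to absorb $t$ into the suitable subgroup. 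It is precisely this HNN-then-quotient mechanism that makes the whole induction run, and your outline omits it.
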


Applying this theorem to the group $G=\Z/2^{n-2}\Z$ one can show that for each integer $n\ge 2$ there exists a $2$-generated group
with ${\ncc}$. And when $n=2$ we get a $2$-generated torsion-free group that has exactly two conjugacy classes.

The presence of elements of finite orders in the above constructions was important, because if two elements have different orders,
they can never be conjugate. So, naturally, one can ask the following

\begin{quest} \label{q:1} Do there exist
torsion-free (finitely generated) groups with {\ncc}, for any integer $n \ge 3$?
\end{quest}

Note that if $G$ is the finitely generated group with {\cc} constructed by Osin, then the $m$-th direct power $G^m$ of $G$
is also a finitely generated torsion-free group which satisfies ($2^m$CC). But what if we want to achieve a torsion-free
group with ($3$CC)? With this purpose one could come up with

\begin{quest} \label{q:2} Suppose that $G$ is a countable torsion-free group and $x,y \in  G$ are non-conjugate. Is it possible to
embed $G$ into a group $M$, which has ($3$CC), so that $x$ and $y$ stay non-conjugate in $M$?
\end{quest}

Unfortunately, the answer to Question \ref{q:2} is negative as the following example shows.

\begin{example}\label{ex:Klein_bottle_gp}
Consider the group \begin{equation} \label{eq:Kl-bot} G_1=\langle a,t~\|~tat^{-1}=a^{-1}\rangle \end{equation}
which is isomorphic to the non-trivial semidirect product
$\Z \rtimes \Z$. Note that $G_1$ is torsion-free, and $t$ is not conjugated to $t^{-1}$ in $G_1$ because $t \nsim t^{-1}$ in the infinite
cyclic group $\langle t \rangle$ which is canonically isomorphic to the quotient of $G_1$ by the normal closure
of $a$. However, if $G_1$ is embedded into a (3CC)-group $M$, it is easy to see that every element of $M$ will be conjugated to its inverse
(indeed, if $y \in M \setminus \{1\}$ and $y \stackrel{M}{\nsim} y^{-1}$ then $y^\epsilon \stackrel{M}{\sim} a \stackrel{M}{\sim} a^{-1}$,
 for some $\epsilon \in \{1,-1\}$, hence $y^\epsilon \stackrel{M}{\sim} y^{-\epsilon}$ -- a contradiction).
In particular, $t \stackrel{M}{\sim} t^{-1}$.
\end{example}

An analog of the above example can be given for each $n \ge 3$ -- see Section \ref{sec:groups_with_fCC}. This example shows that, in order
to get a positive result, one would have to strengthen the assumptions of Question \ref{q:2}.

Let $G$ be a group. Two elements $x,y \in G$  are said to be {\it commensurable} if there exist
$k,l \in \Z\setminus \{0\}$ such that $x^k$ is conjugate to $y^l$. We will use the notation $x \stackrel{G}{\approx} y$ if $x$
and $y$ are commensurable in $G$. In the case when $x$ is not commensurable with $y$ we will write
$x \stackrel{G}{\not\approx} y$.
Observe that commensurability, as well as conjugacy, defines an equivalence relation on the set of elements of $G$.
It is somewhat surprising that if one replaces the words "non-conjugate" with the words  "non-commensurable" in Question \ref{q:2},
the answer becomes positive:

\begin{cor} \label{cor:non-comm_sep_emb} Assume that $G$ is a countable torsion-free group, $n \in \N$, $n \ge 2$, and
$x_1,\dots,x_{n-1} \in G\setminus \{1\}$ are pairwise non-commensurable. Then there exists a group $M$ and an
injective homomorphism $\varphi:G \to M$ such that
\begin{itemize}
\item[1.] $M$ is torsion-free and generated by two elements;
\item[2.] $M$ has {\ncc};
\item[3.] $M$ is $2$-boundedly simple;
\item[4.] the elements $\varphi(x_1),\dots,\varphi(x_{n-1})$ are pairwise non-commensurable in $M$.
\end{itemize}
\end{cor}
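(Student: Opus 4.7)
The proof plan is to derive the corollary from a general embedding theorem (presumably established later in the paper): any countable torsion-free group $G$ with pairwise non-commensurable elements $x_1,\dots,x_{n-1}$ embeds in a $2$-generated torsion-free $2$-boundedly simple group $M$ with \ncc{} such that the images of the $x_i$'s remain pairwise non-commensurable. Once such a theorem is in place, the corollary follows immediately: the $n-1$ non-commensurable images, together with $1$, provide at least $n$ distinct conjugacy classes, and since $M$ satisfies \ncc, every other non-trivial element must lie in one of these.

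To prove the general theorem, I would build $M$ as a direct limit of a chain $G=G_0\hookrightarrow G_1\hookrightarrow G_2\hookrightarrow\cdots$, where at each stage $G_k$ is torsion-free and hyperbolic relative to a peripheral subgroup containing the images of $\langle x_1\rangle,\dots,\langle x_{n-1}\rangle$ in a controlled, malnormal way. Fixing a single enumeration $\{g_k\}_{k\in\N}$ of all non-trivial elements together with all tasks needed to enforce $2$-generation and $2$-bounded simplicity, at stage $k+1$ I would adjoin a small cancellation relator, in the sense of \cite{Osin-SCT}, that either conjugates $g_k$ to one of the $\varphi_k(x_i)$'s (whichever is commensurable with $g_k$ in $G_k$; if none is, an $x_i$ is chosen so as to keep the eventual class count equal to $n$) or realizes one of the remaining auxiliary tasks. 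The relators are chosen long and ``rotating'' enough relative to the peripheral structure that Osin's technical theorems guarantee: $G_k\hookrightarrow G_{k+1}$ is injective, $G_{k+1}$ is torsion-free and again relatively hyperbolic, the prescribed new conjugacy holds, and the relative hyperbolic structure survives so that the induction can continue.

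The hard part is preserving clause~(4) across all infinitely many steps. Commensurability is a much finer invariant than the conjugacy-of-same-order condition in Theorem~\ref{thm:osin's-main}: an unwise relator intended only to make $g_k$ conjugate to $\varphi_k(x_i)$ can, through its HNN-style letters, inadvertently create a relation $x_i^p\sim x_j^q$ via detours through the peripheral subgroup. The remedy is to keep $\langle\varphi_k(x_1)\rangle,\dots,\langle\varphi_k(x_{n-1})\rangle$ jointly inside a single peripheral subgroup $H_k$ with strong malnormality properties, and to verify, using the paper's refined small cancellation lemmas, that the added relators never conjugate a power of one $\varphi_k(x_i)$ to a power of another. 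Given this control at every stage, all four conclusions (torsion-freeness, $2$-generation, \ncc, $2$-bounded simplicity, and preservation of non-commensurability) pass to the direct limit $M$.
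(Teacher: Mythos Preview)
Your opening paragraph is exactly how the paper proceeds: the corollary is an immediate specialisation (take $X_i=\{x_i\}$) of Theorem~\ref{thm:cc-emb}. Where you diverge is in the proof of that theorem. The paper does \emph{not} run a single small-cancellation direct limit. It works in two decoupled stages. First it builds an infinitely generated torsion-free {\ncc}-group $S\supseteq G$ in which the $X_i$ remain independent, using only classical HNN-extensions with cyclic associated subgroups; the technical core is the elementary Britton's-Lemma computation of Lemma~\ref{lem:HNN-conj}, which says that if $x_1$ is non-commensurable with $x_2,x_3,x_4$ in $G$ then $x_1$ stays non-commensurable with $x_2$ in $\langle G,t\mid tx_3t^{-1}=x_4\rangle$. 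Two-bounded simplicity is arranged by a free-product trick (Corollary~\ref{cor:2-b-s}): pass to $G*F(a_1,\dots,b_{n-1})$ and throw the elements $a_i^{\pm1}$ and $[a_j,b_i]$ into the $X_i$'s before running the HNN tower. Only in the second stage does relative hyperbolicity enter, and then merely as a black box: Osin's embedding (Lemma~\ref{lem:fg-emb}) places $S$ as a \emph{malnormal} subgroup of a $2$-generated $M$ in which every element is conjugate into $S$; malnormality alone transfers non-commensurability from $S$ to $M$.

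Your unified scheme has a gap you do not address. If you keep $\langle x_1\rangle,\dots,\langle x_{n-1}\rangle$ inside a peripheral subgroup $H_k$ in order to protect non-commensurability via malnormality, then the entire image of $G$ must sit inside $H_k$ as well (an arbitrary countable torsion-free $G$ is not itself relatively hyperbolic with respect to these cyclic subgroups, so the only way to start is with $G$ peripheral in $G*F$). But then every non-trivial element of $G$ is parabolic, and you still need to conjugate each such element to some $x_i$. The tools you invoke---Osin's quotient theorem (Theorem~\ref{thm:main_SCT}) and the HNN step of Lemma~\ref{lem:HNN-rel_hyp}---conjugate a \emph{hyperbolic} element into a peripheral one; they give no mechanism for identifying two parabolic elements lying in the same peripheral subgroup while keeping that subgroup intact. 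The paper's two-stage design sidesteps this entirely: all the within-$G$ conjugation is handled first by elementary means, and the resulting {\ncc}-group is only afterwards installed as the malnormal peripheral subgroup. (The paper's Remark after Theorem~\ref{thm:cc-emb} does mention a more direct route via Theorem~\ref{thm:ext-main}, but there too the peripheral $H$ already carries the required conjugacy structure before the limit begins.)
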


Recall that a group $G$ is said to be $k$-{\it boundedly simple} if for any $x,y \in G\setminus \{1\}$ there exist
$l \le k$ and $g_1,\dots,g_l \in G$ such that $x = g_1yg_1^{-1}\cdots g_lyg_l^{-1}$ in $G$. A group is called
{\it boundedly simple} if it is $k$-boundedly simple for some $k \in \N$. Evidently every boundedly simple group is simple; the converse
is not true in general. For example, the infinite alternating group $A_\infty$ is simple but not boundedly simple because conjugation
preserves the type of the decomposition of a permutation into a product of cycles.
First examples of torsion-free finitely generated boundedly simple groups were constructed by A. Muranov (see \cite[Thm. 2]{Mur1},
\cite[Thm. 1]{Mur3}).

Corollary \ref{cor:non-comm_sep_emb} is an immediate consequence of a more general Theorem \ref{thm:cc-emb} that will be proved in Section
\ref{sec:groups_with_fCC}.

Applying Corollary \ref{cor:non-comm_sep_emb} to the group $G=F(x_1,\dots,x_{n-1})$, which is free on the set $\{x_1,\dots,x_{n-1}\}$,
and its non-commensurable elements $x_1,\dots,x_{n-1}$, we obtain
a positive answer to Question \ref{q:1}:

\begin{cor} \label{cor:existence} For every integer $n \ge 3$ there exists a torsion-free
$2$-boundedly simple group satisfying {\ncc} and generated by two elements.
\end{cor}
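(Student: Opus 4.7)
The plan is to deduce this directly from Corollary \ref{cor:non-comm_sep_emb}, which has already done all the hard work. Given $n \ge 3$, I would take $G = F(x_1, \dots, x_{n-1})$, the free group on $n-1$ generators. This group is countable and torsion-free, so the hypotheses of Corollary \ref{cor:non-comm_sep_emb} concerning $G$ are immediate.

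The only point that genuinely needs verification is that the free generators $x_1, \dots, x_{n-1}$ are pairwise non-commensurable in $G$. I would argue this via the standard fact that in a free group, any two commuting elements lie in a common cyclic subgroup, so commensurability of two nontrivial elements is equivalent to them being powers of a common element. For free generators $x_i, x_j$ with $i \neq j$, any relation of the form $g x_i^k g^{-1} = x_j^l$ with $k,l \neq 0$ is impossible: the abelianization map $G \to \Z^{n-1}$ sends this to $k e_i = l e_j$, which forces $k = l = 0$, a contradiction. Hence the generators are pairwise non-commensurable.

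Applying Corollary \ref{cor:non-comm_sep_emb} to $G$ and the tuple $(x_1, \dots, x_{n-1})$ then yields a group $M$ satisfying conditions (1)--(4). Properties (1)--(3) give exactly what is claimed: $M$ is torsion-free, $2$-generated, $2$-boundedly simple, and possesses $n$CC. (Property (4), non-commensurability of the images $\varphi(x_i)$ in $M$, is not needed here but is what rules out the possibility that the embedding trivially collapses the generators.)

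There is essentially no obstacle: the entire content of the corollary has been packaged into Corollary \ref{cor:non-comm_sep_emb}, and the present statement is extracted by feeding it the most economical input, namely a free group with its basis. The only mild subtlety is picking a torsion-free source group with $n-1$ pairwise non-commensurable elements, and free groups are the natural choice since their free generators are manifestly non-commensurable via the abelianization argument above.
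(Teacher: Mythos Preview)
Your proposal is correct and follows exactly the paper's own approach: apply Corollary~\ref{cor:non-comm_sep_emb} to the free group $G=F(x_1,\dots,x_{n-1})$ with its free generators, which are pairwise non-commensurable. Your abelianization argument for non-commensurability is a bit more detailed than what the paper offers (which simply asserts it), but the strategy is identical.
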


(In the case when $n=2$ the above statement was obtained by Osin in \cite[Cor. 1.3]{Osin-SCT}.)
In fact, for any (finitely generated) torsion-free group $H$ we can set
$G=H*F(x_1,\dots,x_{n-1})$, and then use Corollary \ref{cor:non-comm_sep_emb} to embed
$G$ into a group $M$ enjoying the properties $1-4$ from its claim. Since there is a continuum of pairwise
non-isomorphic $2$-generated torsion-free
groups (\cite{Camm}), and a finitely generated group can contain at most countably
many of different $2$-generated subgroups, this shows that there must be continually many pairwise non-isomorphic
groups satisfying properties
$1-3$ from Corollary \ref{cor:non-comm_sep_emb}.

Recall that the rank $rank(G)$ of a group $G$ is the minimal number of elements required to generate $G$.
In Section \ref{sec:norm_sbgps} we show how classical theory of HNN-extensions allows to construct different
embeddings into (infinitely generated) groups that have finitely many classes of
conjugate elements, and in Section \ref{sec:add_fg} we use Osin's results (from \cite{Osin-SCT}) regarding quotients of
relatively hyperbolic groups to prove

\begin{thm} \label{thm:emb-ext-non-simple} Let $H$ be a torsion-free countable group and let $M \lhd H$ be a non-trivial normal subgroup.
Then $H$ can be isomorphically embedded into a torsion-free group $Q$, possessing a normal subgroup $N \lhd Q$, such that
\begin{itemize}
\item $Q=H\cdot N$ and $H \cap N=M$ (hence $Q/N \cong H/M$);
\item $N$ has {\cc};
\item $\forall~x,y \in Q\setminus \{1\}$, $x \stackrel{Q}{\sim} y$ if and only if $\varphi(x) \stackrel{Q/N}{\sim} \varphi(y)$, where
$\varphi: Q \to Q/N$ is the natural homomorphism; 
\item $rank(N)=2$ and $rank(Q) \le rank(H/M)+2$.
\end{itemize}
\end{thm}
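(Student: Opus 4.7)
The plan is to build $Q$ as a direct limit of small-cancellation quotients of a relatively hyperbolic group built from $H$, following the scheme pioneered in \cite{Osin-SCT}. First I would set $G_0 := H * F$ where $F = F(a,b)$ is free of rank $2$. Then $G_0$ is torsion-free and hyperbolic relative to the peripheral subgroup $\{H\}$. Let $N_0 \lhd G_0$ be the normal closure of $M \cup \{a,b\}$; under the natural map $G_0 \to H/M$ sending $a,b \mapsto 1$, the kernel is exactly $N_0$, so $G_0/N_0 \cong H/M$, $H \cap N_0 = M$, and $G_0 = H\cdot N_0$. These four structural identities will be the invariants I preserve throughout the construction.

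Next, I would enumerate in a single diagonal sequence all ``tasks'' to be handled in the limit: (i) for every non-trivial element $u$ appearing in (the image of) $N_0$ at any stage, force $u$ to become conjugate to $\bar a$ via a conjugator lying in $\langle \bar a,\bar b\rangle$; (ii) for every pair $x,y \in G_n\setminus\{1\}$ whose images in $H/M$ are conjugate, force $x\sim y$. Inductively I would construct torsion-free quotients $G_0 \twoheadrightarrow G_1 \twoheadrightarrow \cdots$, each hyperbolic relative to $\{H\}$, with the invariants above preserved at every stage $n$. At step $n$, Osin's small-cancellation machinery from \cite{Osin-SCT} would be used to add a single relator $w_n$ --- of the form $u_n\cdot v_n\bar a^{-1}v_n^{-1}$ for tasks of type (i), or an analogous commutator-type relator $[h_n,y_n]\cdot z_n^{-1}$ for tasks of type (ii) --- in which the conjugator part $v_n$ (resp.\ the auxiliary element $h_n$) is chosen as a sufficiently long and ``generic'' word in $\langle \bar a,\bar b\rangle$ so that the metric small-cancellation condition is met in the current relatively hyperbolic structure on $G_n$. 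Since $w_n\in N_n$ and is not a proper power, all invariants (torsion-freeness, relative hyperbolicity with peripheral subgroup $\{H\}$, injectivity of $H\hookrightarrow G_n$, and the identities $H\cap N_n=M$ and $G_n/N_n\cong H/M$) are preserved in $G_{n+1}$.

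Passing to $Q := \varinjlim G_n$ and $N := \varinjlim N_n$, all invariants persist in the limit; by construction $N$ satisfies {\cc}, and because each non-trivial element of $N$ was forced to be conjugate to $\bar a$ via a word in $\bar a,\bar b$, one has $N=\langle\bar a,\bar b\rangle$ and hence $rank(N)=2$ (the inequality $rank(N)\ge 2$ is automatic, since no cyclic group satisfies {\cc}). The bound $rank(Q)\le rank(H/M)+2$ follows by adjoining $\bar a,\bar b$ to any lift in $H$ of a minimal generating set of $H/M$. The conjugacy equivalence in the third bullet is built into the type~(ii) tasks in one direction and is automatic in the other. The main obstacle --- where the technical heart of the argument lies --- will be the uniform verification that Osin's small-cancellation theorem applies at every inductive step with all invariants simultaneously preserved: in particular, $w_n$ must lie in $N_n$ (to keep $G_n/N_n\cong H/M$), have its conjugator part inside $\langle \bar a,\bar b\rangle$ (for the rank bound), and be long and generic enough in the Cayley geometry of $G_n$ relative to $\{H\}$ to guarantee that the embedding of $H$, the intersection $H\cap N_n=M$, and the absence of new torsion all survive into the quotient $G_{n+1}$.
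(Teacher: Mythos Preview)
Your plan diverges from the paper's proof in a way that creates a genuine gap. The paper proceeds in two stages: first it applies Lemma~\ref{lem:emb-non-simple}, which uses only \emph{classical} HNN-extensions (no relative hyperbolicity) to enlarge $H$ to a countable torsion-free group $G$ with $K\lhd G$ so that $G=HK$, $H\cap K=M$, and the full conjugacy condition already holds in $G$ (in particular $K$ has {\cc}); only then does it apply the small-cancellation machinery (Theorem~\ref{thm:ext-main}) to $(G,K)$ to obtain $(Q,N)$ with $N$ two-generated and every element of $Q$ conjugate to some element of $G$. The point is that Theorem~\ref{thm:ext-main} never has to make two \emph{parabolic} elements conjugate: for a hyperbolic $\hat q$ one adjoins $E_{G(i)}(\hat q)$ as an extra peripheral subgroup (Lemma~\ref{lem:Eg}), performs the HNN-extension $t(hx)t^{-1}=hy$ (Lemma~\ref{lem:HNN-rel_hyp}), and then absorbs $t$ into the suitable subgroup via Theorem~\ref{thm:main_SCT}; if $\hat q$ is already conjugate into $H$, nothing is done. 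All remaining conjugacies --- including those between distinct elements of $M$, and between $h$ and $hm$ for $h\in H\setminus M$, $m\in M$ --- are inherited from Lemma~\ref{lem:emb-non-simple}.

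Your one-stage plan must manufacture exactly these parabolic-to-parabolic conjugacies inside the relatively hyperbolic tower, and neither of the tools you invoke does that. Theorem~\ref{thm:main_SCT} forces elements of $T$ to lie in $\eta(S)$; it does not produce a relation of the form $gxg^{-1}=y$ for prescribed $x,y$. The HNN-extension route requires the conjugated element to be hyperbolic so that its maximal elementary subgroup can be added as a peripheral (Lemma~\ref{lem:Eg}), which fails once $x\in H$. Concretely, after your very first type~(i) task makes some $u_1\in M$ conjugate to $\bar a$, the element $\bar a$ becomes parabolic and you can no longer iterate the mechanism; yet to get {\cc} in $N$ you still need all of $M$ to fall into one $N$-conjugacy class. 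There is also a secondary problem with your proposed relator $u_n\cdot v_n\bar a^{-1}v_n^{-1}$: a cyclic shift of it and its inverse share the prefix $v_n$, which has length roughly half the relator, so the small-cancellation hypothesis will not be met regardless of how ``generic'' $v_n$ is. The fix is precisely the paper's: first perform the HNN-tower of Lemma~\ref{lem:emb-non-simple} on $(H,M)$ to get $(G,K)$ with the conjugacy condition built in, and only then run your direct-limit construction starting from $G*F$ with peripheral subgroup $G$.
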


This theorem implies that if $Q/N \cong H/M$ has exactly $(n-1)$ conjugacy classes (e.g., if it is finite), then the group $Q$ will have {\ncc}
and will not be simple (if $n \ge 3$). Thus it may be used to build {\ncc}-groups in a recursive manner.
It also allows to obtain embeddings of countable torsion-free
groups into {\ncc}-groups, which we could not get by using Corollary \ref{cor:non-comm_sep_emb}. For instance,
as we saw in Example \ref{ex:Klein_bottle_gp}, the fundamental group of the Klein bottle
$G_1$, given by \eqref{eq:Kl-bot}, can not be embedded into a ($3$CC)-group $M$ so that
$t \stackrel{M}{\nsim} t^{-1}$. However, with $4$ conjugacy classes this is already possible: see Corollary \ref{cor:klein-bottle-emb}
in Section \ref{sec:add_fg}. The idea is as follows: the group $G_1$ can be mapped onto $\Z/3\Z$ in such a way
that the images of the elements $t$ and $t^{-1}$ are distinct. Let $M$ be the kernel of this homomorphism.
One can apply Theorem \ref{thm:emb-ext-non-simple} to the pair $(G_1,M)$ to obtain the
required embedding of $G_1$ into a group $Q$. And since $\Z/3\Z$ has exactly $3$ conjugacy classes, the group $Q$ will have
($4$CC).

An application of Theorem \ref{thm:emb-ext-non-simple} to the case when $H=\Z$ and $M=2\Z \lhd H$ also provides an affirmative answer to a
question of A. Izosov from \cite[Q. 11.42]{Kourovka}, asking whether there exists a torsion-free ($3$CC)-group
$Q$ that contains a normal subgroup $N$ of index $2$.

The goal of the second part of this article is to show that every countable group can be realized as a group of
outer automorphisms of some finitely generated {\cc}-group. This problem has some historical background: in \cite{Matumoto}
T. Matumoto proved that every group is a group of outer automorphisms of some group (in contrast, there are groups, e.g., $\Z$,
that are not full automorphism groups of any group); M. Droste, M. Giraudet, R. G\"{o}bel (\cite{DGG}) showed that
for every group $C$ there exists a simple group $S$ such that $Out(S) \cong C$; I. Bumagina and D. Wise in \cite{Bum-Wise}
proved that each countable group $C$ is isomorphic to $Out(N)$ where $N$ is a $2$-generated subgroup of a countable $C'(1/6)$-group,
and if, in addition, $C$ is finitely presented then one can choose $N$ to be residually finite.

In Section \ref{sec:comb_paths} we establish a few useful statements regarding paths in the Cayley
graph of a relatively hyperbolic group $G$, and apply them in Section \ref{sec:smal_canc} to obtain
small cancellation quotients of $G$ satisfying certain conditions. Finally, in Section \ref{sec:every_gp=out}
we prove the following

\begin{thm}\label{thm:gp=out} Let $C$ be an arbitrary countable group. Then for every non-elemen\-tary torsion-free
word hyperbolic group $F_1$ there exists a torsion-free group $N$ satisfying the following properties:
\begin{itemize}
\item $N$ is a $2$-generated quotient of $F_1$;
\item $N$ has {\cc};
\item $Out(N) \cong C$.
\end{itemize}
\end{thm}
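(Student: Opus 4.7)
The plan is to realize $N$ as a small-cancellation quotient of the relatively hyperbolic free product $\tilde G = F_1 * C$. Since $F_1$ is word hyperbolic, $\tilde G$ is hyperbolic relative to the single peripheral subgroup $C$, and both $F_1$ and $C$ embed naturally with trivial intersection. I will then pass to a quotient $\tilde N = \tilde G / \langle\langle \mathcal R \rangle\rangle$ in which the image of $F_1$ becomes normal (and is declared to be $N$), while the image of $C$ stays injective and meets $N$ trivially, giving $\tilde N = N \rtimes C$.

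The relator set $\mathcal R$ is built up inductively, using the path-combinatorics and small-cancellation machinery for relatively hyperbolic groups developed in Sections \ref{sec:comb_paths} and \ref{sec:smal_canc}. Choose two ``generic'' elements $a, b \in F_1$ that freely generate a malnormal quasiconvex free subgroup of $F_1$. Then $\mathcal R$ consists of four families of relators: (i) for each generator $f$ of $F_1$ and each $c$ in a generating set of $C$, a relator $cfc^{-1}\, u_{c,f}(a,b)^{-1}$ with $u_{c,f}$ a long generic word in $a,b$, which simultaneously forces the image of $F_1$ to be $C$-invariant and expresses its $C$-conjugates as words in $a,b$; (ii) for each generator $f$ of $F_1$, a relator $f\, v_f(a,b)^{-1}$, making $N$ generated by two elements; (iii) Osin-style ``conjugator'' relators that progressively merge conjugacy classes inside the image of $F_1$, eventually producing the \cc\ property; and (iv) ``separating'' relators added at each stage to prevent any non-trivial $c \in C$ from acting on $N$ as an inner automorphism, and to destroy candidate automorphisms of $N$ that do not come from the action of $C$. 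By taking all these relators long and generic enough, one stays within the small-cancellation regime, and the preservation theorems of Osin and of Section \ref{sec:smal_canc} ensure that torsion-freeness, injectivity of the embedding $C \hookrightarrow \tilde N$, and the equality $N \cap C = \{1\}$ are maintained in the limit.

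The conjugation action of $C$ on $N$ inside $\tilde N$ yields a homomorphism $\phi \colon C \to Out(N)$ which is injective by the relators of family~(iv); the main obstacle is the surjectivity of $\phi$. Since $N$ is generated by the pair $(a,b)$, any automorphism $\alpha$ of $N$ is determined by $\bigl(\alpha(a),\alpha(b)\bigr)$, and one must show that in this rigid small-cancellation quotient every such pair is, up to inner automorphism of $N$, a $C$-translate $(cac^{-1}, cbc^{-1})$. This rigidity is established by a combinatorial analysis in the spirit of Bumagina--Wise \cite{Bum-Wise} and Ol'shanskii, exploiting the fact that the defining relators of $N$ are generic and impose no symmetries on $N$ beyond those explicitly encoded by $C$. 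The delicate part is balancing the addition of separating relators in family~(iv) against the merging relators of family~(iii), ensuring that neither family destroys the small-cancellation hypotheses nor the structural properties that guarantee $\tilde N = N \rtimes C$ with the image of $F_1$ equal to $N$.
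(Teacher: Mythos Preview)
Your outline has the right shape --- build a limit of small-cancellation quotients in which the image of $F_1$ becomes a normal $2$-generated {\cc}-subgroup $N$ with $\tilde N/N \cong C$ --- but there are two genuine gaps.

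\textbf{Torsion in $C$.} You start from $\tilde G = F_1 * C$, hyperbolic relative to $C$. But $C$ is an \emph{arbitrary} countable group and may well contain torsion; then $\tilde G$ is not torsion-free, and every small-cancellation tool you intend to invoke (Theorem~\ref{thm:main_SCT}, Lemma~\ref{lem:add_rel}, the malnormality statement in Lemma~\ref{lem:malnorm}) is stated in this paper only for torsion-free $G$. The paper avoids this by never putting $C$ itself inside the ambient group: instead one first passes (via Lemma~\ref{lem:ncc-normal} or Lemma~\ref{lem:emb-non-simple}) to a \emph{torsion-free} group $H$ with a normal subgroup $M$ such that $H/M \cong C$, and then works with $G(0)=H*F$, hyperbolic relative to $H$. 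This ``desingularization'' of $C$ is not an optional simplification; without it your inductive construction does not get off the ground.

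\textbf{Surjectivity of $C \to Out(N)$.} This is the heart of the theorem, and your family~(iv) of ``separating relators'' together with an appeal to Bumagina--Wise-style rigidity does not constitute a mechanism. The difficulty is that one must rule out \emph{every} automorphism of $N$ not coming from conjugation in $\tilde N$, and $N$ is an infinitely presented limit group, so there is no finite relator set whose lack of symmetry settles the question. The paper's argument is quite specific: it enumerates all elements $q_i$ of the ambient group, and at stage $i$ it asks whether the element $a^k \bar q_i a^k \bar q_i^{-1}$ is commensurable with $a^k s_i a^k s_i^{-1}$ for all large $k$. If not, Lemma~\ref{lem:add_rel} manufactures a relator that kills $R_i(a,\bar q_i a \bar q_i^{-1})$ while keeping $R_i(a,s_i a s_i^{-1})\neq 1$, so $\bar q_i$ can never induce an automorphism in the limit. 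If yes, the structural Lemma~\ref{lem:comm-spec} forces $\bar q_i = \gamma s_i^{\xi}\beta$ with $\beta,\gamma \in H$ normalizing $\langle a\rangle$, and then the pre-installed asymmetry word $R(x,y)$ of Lemma~\ref{lem:hyp-spec-gen} (satisfying $R(a,b)\neq 1$ but $R(a^{-1},b^{-1})=R(b,a)=R(b^{-1},a^{-1})=1$) eliminates the cases $\xi=-1$ and $\epsilon=-1$, leaving only conjugation by an element of $H$. Your proposal contains no analogue of Lemmas~\ref{lem:comm-spec}, \ref{lem:add_rel}, or \ref{lem:hyp-spec-gen}, and without them there is no reason the limit group should have $Out(N)$ exactly $C$ rather than some larger group.
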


The principal difference between this theorem and the result of \cite{Bum-Wise} is that our group $N$ is
torsion-free and simple. Moreover, if one applies Theorem \ref{thm:gp=out} to the case when $F_1$ is a torsion-free
hyperbolic group with Kazhdan's property (T) (and recalls that every quotient of a group with property (T) also has (T)), one will
get

\begin{cor} \label{cor:prop_T} For any countable group $C$ there is a $2$-generated group $N$ such that $N$ has
{\cc} and Kazhdan's property (T), and $Out(N) \cong C$.
\end{cor}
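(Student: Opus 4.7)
The plan is to derive this statement directly from Theorem \ref{thm:gp=out} by supplying an appropriate hyperbolic group $F_1$ and then transferring property (T) along the quotient map. First I would exhibit a non-elementary torsion-free word hyperbolic group $F_1$ possessing Kazhdan's property (T). Such groups are standard in the literature: any cocompact lattice in the rank-one simple Lie group $Sp(n,1)$ with $n\ge 2$ is word hyperbolic (since it acts cocompactly and properly discontinuously on the negatively curved quaternionic hyperbolic space), and it has property (T) by Kostant's theorem. By Selberg's lemma one can pass to a torsion-free subgroup of finite index, which inherits hyperbolicity and property (T) and remains non-elementary (being infinite and non-virtually-cyclic, as it still has exponential growth).

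With this $F_1$ in hand, the plan is to apply Theorem \ref{thm:gp=out} to the pair $(F_1, C)$. This yields a torsion-free $2$-generated group $N$ which is a quotient of $F_1$, has \cc, and satisfies $Out(N) \cong C$. It remains only to verify that $N$ has Kazhdan's property (T). This follows from the classical fact that property (T) is preserved under surjective homomorphisms: any affine isometric action of $N$ on a Hilbert space lifts, via composition with the quotient map $F_1 \twoheadrightarrow N$, to an affine isometric action of $F_1$, and a global fixed point for the latter is automatically a global fixed point for the former.

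Thus no substantial obstacle arises; the only non-routine ingredient is the existence of the group $F_1$, which is already known. Once $F_1$ is fixed, everything else is a direct invocation of Theorem \ref{thm:gp=out} combined with the hereditary property of (T) under quotients.
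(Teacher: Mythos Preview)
Your proposal is correct and matches the paper's own argument essentially verbatim: the paper derives the corollary by applying Theorem~\ref{thm:gp=out} to a torsion-free word hyperbolic group with property~(T) and invoking the fact that property~(T) passes to quotients. You supply slightly more detail on the existence of such an $F_1$ (via lattices in $Sp(n,1)$ and Selberg's lemma), but the strategy is identical.
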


The reason why Kazhdan's property (T) is interesting in this context is the question from \cite[p. 134]{Harpe-Valette} which asked
whether there exist groups that satisfy property (T) and have infinite outer automorphism groups (it can be motivated by
a theorem of F. Paulin \cite{Paulin} which claims that the outer automorphism group is finite for any word hyperbolic group with property (T)).
Positive answers to this question
were obtained (using different methods) by Y. Ollivier and D. Wise \cite{Ol-Wise}, Y. de Cornulier \cite{de_Cornul}, and
I. Belegradek and D. Osin \cite{Bel-Osin}.
Corollary \ref{cor:prop_T} not only shows that the group of outer automorphisms of a group $N$ with property (T)  can be infinite, but also
demonstrates that there are no restrictions whatsoever on $Out(N)$.

{\bf Acknowledgements.} The author would like to thank D. Osin for fruitful discussions and encouragement.
%
%


\section{Relatively hyperbolic groups}
Assume that $G$ is a group, $\Hl$ is a
fixed collection of subgroups of $G$ (called {\it peripheral
subgroups}), and $\cX$ is a subset of $G$. The subset  $\cX$ is called a {\it relative
generating set of $G$} with respect to $\Hl $ if $G$ is generated by
$\cX \cup \bigcup_{\lambda \in \Lambda} H_\lambda $. In this case $G$  a quotient of the free product
$$F=\left( \ast _{\lambda\in \Lambda } H_\lambda  \right) \ast F(\cX),$$
where $F(\cX)$ is the free group with basis $\cX$.
Let $\mathcal R $ be a subset of $F$ such that the kernel
of the natural  epimorphism $F\to G$ is the normal closure of $\mathcal R $ in the group $F$;
then we will say that $G$ has {\it relative presentation}
\begin{equation}\label{eq:pres_of_G}
\langle \cX,\; \{H_\lambda\}_{\lambda\in \Lambda}~ \|~ R=1,\, R\in\mathcal R\rangle .
\end{equation}
If the sets $\cX$ and $\mathcal R$ are finite, the relative
presentation (\ref{eq:pres_of_G}) is said to be {\it finite}.

Set $\mathcal H=\bigsqcup_{\lambda\in \Lambda} (H_\lambda\setminus \{ 1\} )$.
A finite relative presentation \eqref{eq:pres_of_G} is said to satisfy a {\it linear
relative isoperimetric inequality} if there exists $C>0$ such that, for
every word $w$ in the alphabet $\cX\cup \mathcal{H}$ (for convenience,  we will further assume that
$\cX^{-1}=\cX$) representing the identity in the group
$G$, one has
$$w\stackrel{F}{=}\prod\limits_{i=1}^k f_i^{-1}R_i^{\pm 1}f_i,$$
with equality in the group $F$, where $R_i\in \mathcal{R}$,
$f_i\in F $, for $i=1, \ldots , k$, and  $k\le C\| w\| $,
where $\| w\|$ is the length of the word $w$.

The next definition is due to Osin (see \cite{Osin-RHG}):

\begin{df} the group $G$ is called {\it  hyperbolic relative to} (the
collection of peripheral subgroups) $\Hl $, if $G$ admits a finite relative
presentation (\ref{eq:pres_of_G})  satisfying a  linear
relative isoperimetric inequality.
\end{df}

This definition is
independent of the choice of the finite generating set $\cX$ and
the finite set $\mathcal R$ in (\ref{eq:pres_of_G}) (see \cite{Osin-RHG}).
We would  also like to note that, in general, it does not require the group $G$ to be finitely generated, which
will be important in this paper.
The definition immediately implies the following basic facts:

\begin{remark}[\cite{Osin-RHG}] \label{rem:free_prod_rel_hyp}
(a) Let $\{H_\lambda\}_{\lambda \in \Lambda}$ be an arbitrary family of groups. Then the free product
$G=\ast_{\lambda\in \Lambda} H_\lambda$ will be hyperbolic relative to $\Hl$.

(b) Any word hyperbolic group (in the sense of Gromov) is hyperbolic relative to the family $\{\{1\}\}$, where
$\{1\}$ denotes the trivial subgroup.
\end{remark}

Recall that a group $H$ is called {\it elementary} if it has a cyclic subgroup of finite index.
Further in this section we will assume that $G$ is a non-elementary group hyperbolic relative to a family of proper subgroups
$\{H_\lambda\}_{\lambda \in \Lambda}$.

An element $g \in G$ is said to be {\it parabolic} if it is conjugated to an element of $H_\lambda$ for some $\lambda \in \Lambda$.
Otherwise $g$ is said to be {\it hyperbolic}. Given a subgroup $S\le G$, we denote by $S^0$ the set of all
hyperbolic elements of $S$ of infinite order.

%
%
%

\begin{lemma}[\cite{Osin-ESBG}, Thm. 4.3, Cor. 1.7]\label{lem:Eg}
For every $g \in G^0$ the following conditions hold.
\begin{itemize}
\item[1)] The element $g$ is contained in a unique maximal elementary
subgroup $E_G(g)$ of $G$, where
\begin{equation} \label{eq:elem}
E_G(g)=\{ f\in G\; :\;
fg^nf^{-1}=g^{\pm n}\; {\rm for \; some\; } n\in \mathbb N\}.
\end{equation}
\item[2)] The group $G$ is hyperbolic relative to the collection
$\Hl\cup \{ E_G(g)\} $.
\end{itemize}
\end{lemma}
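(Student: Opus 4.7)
The plan is to handle the two parts of the lemma separately, exploiting the loxodromic action of $g$ on the relative Cayley graph $\Gamma(G,\cX\cup\cH)$.

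For part 1), I would first verify that the set $E_G(g)$ is in fact a subgroup. Closure under inversion is immediate from the defining equation $fg^nf^{-1}=g^{\pm n}$, and closure under products follows by passing to a common multiple of the exponents: if $fg^nf^{-1}=g^{\varepsilon n}$ and $f'g^{n'}f'^{-1}=g^{\varepsilon'n'}$, then $(ff')g^{nn'}(ff')^{-1}=g^{\varepsilon\varepsilon'nn'}$. Next, to prove $E_G(g)$ is elementary, I would use that the hyperbolic element $g\in G^0$ of infinite order acts loxodromically on $\Gamma(G,\cX\cup\cH)$, with exactly two fixed points on the boundary. Any $f\in E_G(g)$ permutes these two points, yielding a homomorphism $E_G(g)\to\Z/2\Z$; an element in its kernel fixes both limit points and must therefore commute with a power of $g$, which, by the standard structure theory of centralizers of hyperbolic elements in relatively hyperbolic groups, forces it into a finite extension of $\langle g\rangle$. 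Thus $E_G(g)$ is virtually cyclic. For maximality and uniqueness: if $E\ni g$ is any elementary subgroup, then $\langle g\rangle$ has finite index in $E$, so for each $f\in E$ some power $fg^nf^{-1}$ lies in $\langle g\rangle$ and, being conjugate to $g^n$, must equal $g^{\pm n}$. Hence $E\subseteq E_G(g)$, and $E_G(g)$ is the unique maximal elementary subgroup containing $g$.

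For part 2), I would invoke the general mechanism for adding a suitable elementary subgroup to the peripheral structure. Starting from a finite relative presentation $\langle\cX,\{H_\lambda\}\,\|\,\mathcal{R}\rangle$ of $G$ satisfying a linear relative isoperimetric inequality, I would produce a finite relative presentation with respect to $\{H_\lambda\}\cup\{E_G(g)\}$ by adjoining finitely many relations that express a chosen finite generating set of $E_G(g)$ (modulo $\langle g\rangle$) as words in $\cX\cup\cH$. The core task is to show that the resulting relative Dehn function remains linear: every word in the enlarged alphabet representing $1$ in $G$ should be rewritable, with only linearly many insertions of new relators, into a word over $\cX\cup\cH$ whose original relative area is linear. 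The decisive geometric input is that $E_G(g)$ is almost malnormal in $G$, which follows directly from its uniqueness/maximality established in part 1) together with the fact that two distinct maximal virtually cyclic subgroups of a relatively hyperbolic group cannot share an infinite order hyperbolic element.

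The main obstacle I expect is precisely this last step of part 2): while part 1) is essentially a soft boundary-dynamics argument, verifying the linear relative isoperimetric inequality with respect to the enlarged peripheral family demands genuine work with van Kampen diagrams. One must systematically locate subdiagrams whose boundary words lie in $E_G(g)$, collapse them into single ``large'' letters of the new peripheral factor, and control the accumulated area. This relies on the Bounded Coset Penetration property and on careful tracking of how geodesic paths in the two relative Cayley graphs interact near cosets of $E_G(g)$, so that the almost malnormality of $E_G(g)$ translates into the absence of long cancellations between distinct $E_G(g)$-syllables.
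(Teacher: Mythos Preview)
The paper does not prove this lemma at all: it is quoted verbatim as a known result, with the attribution ``[\cite{Osin-ESBG}, Thm.~4.3, Cor.~1.7]'' and no accompanying argument. So there is no ``paper's own proof'' to compare against; your proposal is being measured against the cited source rather than against anything in this paper.

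That said, your outline is broadly in the spirit of how these facts are established in Osin's work, with one caveat. In part~1) your argument that $E_G(g)$ is virtually cyclic leans on ``the standard structure theory of centralizers of hyperbolic elements in relatively hyperbolic groups,'' which is essentially what this lemma \emph{is}; invoking it here is circular. A cleaner route, and the one actually taken in \cite{Osin-ESBG}, avoids boundary dynamics and instead uses the relative isoperimetric inequality directly (via Lemma~\ref{lem:omega}-type estimates) to show that if $f$ normalises $\langle g^n\rangle$ then the translation lengths force $f$ to lie in a bounded neighbourhood of the $\langle g\rangle$-axis, from which virtual cyclicity follows. Your maximality/uniqueness argument is fine and matches the standard one.

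For part~2) you have correctly identified both the statement to be proved and the genuine work involved: Osin's proof in \cite{Osin-ESBG} does proceed by building a relative presentation over $\{H_\lambda\}\cup\{E_G(g)\}$ and verifying the linear relative isoperimetric inequality, using almost malnormality of $E_G(g)$ exactly as you describe. Your sketch is honest about where the difficulty lies and does not misrepresent the argument.
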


%
%
%

Recall that a non-trivial subgroup $H \le G$ is called {\it malnormal} if for every $g \in G\setminus H$,
$H \cap gHg^{-1}=\{1\}$.
The next lemma is a special case of Theorem 1.4 from  \cite{Osin-RHG}:

\begin{lemma}\label{lem:malnorm}
For any $\lambda \in \Lambda $ and any $g\notin
H_\lambda $, the intersection $H_\lambda \cap g H_\lambda g^{-1}$ is
finite. If $h \in G$, $\mu \in \Lambda$ and $\mu \neq \lambda$, then the intersection  $H_\lambda \cap hH_\mu h^{-1}$ is finite.
In particular, if $G$ is torsion-free then $H_\lambda$ is malnormal (provided that $H_\lambda \neq \{1\}$).
\end{lemma}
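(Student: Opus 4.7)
The plan is to prove the two finiteness statements by exploiting the linear relative isoperimetric inequality available from the definition of relative hyperbolicity, and then to derive the torsion-free assertion as an immediate consequence. The key idea is that in relative presentations, each non-trivial element of a peripheral subgroup $H_\lambda$ is a single letter of the alphabet $\cX \cup \cH$, so any equation of the form ``an element of $H_\lambda$ conjugates to an element of $H_\mu$ via $g$'' can be encoded as a word of bounded relative length representing the identity in $G$, to which the isoperimetric inequality applies.

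For the first assertion, fix $g \notin H_\lambda$ and suppose for contradiction that $H_\lambda \cap gH_\lambda g^{-1}$ contains infinitely many distinct elements $h_n$. Writing $h_n = g h_n' g^{-1}$ with $h_n' \in H_\lambda$, the word $w_n = h_n \cdot g \cdot (h_n')^{-1} \cdot g^{-1}$, in which $h_n$ and $h_n'$ appear as single letters of $\cH$, represents $1$ in $G$ and has length bounded by $2 + 2\|g\|$, independently of $n$. The linear relative isoperimetric inequality then bounds the number of relators needed to fill $w_n$ by a constant $K$ depending only on $g$. Interpreting $w_n$ as a geodesic quadrilateral in the relative Cayley graph $\Gamma(G, \cX \cup \cH)$, the two $\cH$-sides labelled by $h_n$ and $h_n'$ are $H_\lambda$-components whose endpoints are joined by a path of bounded $\cX$-length; a standard consequence of the isoperimetric inequality (developed in \cite{Osin-RHG}) is that such components can represent only finitely many distinct elements of $H_\lambda$, contradicting the assumption that the $h_n$ are all distinct.

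For the second assertion the argument is essentially identical, except that one now has an $H_\lambda$-component and an $H_\mu$-component with $\lambda \neq \mu$. The same kind of component analysis shows that if the intersection $H_\lambda \cap h H_\mu h^{-1}$ were infinite, some long non-trivial element would simultaneously be encoded by a single $H_\lambda$-letter and a single $H_\mu$-letter in a bounded-length quadrilateral, and the relative isoperimetric inequality again forces the set of possibilities to be finite. The torsion-free consequence is then automatic: since $G$ is torsion-free, every finite subgroup is trivial, so the finiteness of $H_\lambda \cap gH_\lambda g^{-1}$ for $g \notin H_\lambda$ (and $H_\lambda \neq \{1\}$) yields $H_\lambda \cap gH_\lambda g^{-1} = \{1\}$, which is precisely malnormality.

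The principal obstacle is the component-analysis step that converts the isoperimetric filling bound into an \emph{a priori} bound on the size of the intersection; making this rigorous requires the technical framework of phase vertices, backtracking, and connected components of polygons in $\Gamma(G, \cX \cup \cH)$. This is exactly the machinery carried out in \cite{Osin-RHG}, which is why the author prefers to quote Theorem 1.4 of that paper directly rather than reproduce the argument here.
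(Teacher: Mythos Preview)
Your proposal is essentially aligned with the paper: the paper gives no proof at all, simply stating that the lemma is a special case of Theorem~1.4 of \cite{Osin-RHG}, and you correctly identify this and defer to the same reference. Your informal sketch of the underlying mechanism (bounded-length cycles in $\Gamma(G,\cX\cup\cH)$ forcing components to represent only finitely many elements) is a reasonable outline of how the argument in \cite{Osin-RHG} actually proceeds, and you are right that turning it into a rigorous proof requires exactly the component/backtracking machinery developed there.
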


\begin{lemma}[\cite{Osin-RHG}, Thm. 2.40]\label{lem:exhyp}
Suppose that a group $G$ is hyperbolic relative to a collection of
subgroups $\Hl \cup \{ S_1, \ldots , S_m\} $, where $S_1, \ldots, S_m $
are word hyperbolic (in the ordinary non-relative sense). Then $G$ is hyperbolic relative to $\Hl $.
\end{lemma}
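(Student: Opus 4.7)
The plan is to build a new finite relative presentation of $G$ with respect to $\Hl$ by ``unfolding'' each word hyperbolic peripheral subgroup $S_i$ via its own finite presentation, and then to verify the linear relative isoperimetric inequality for the new presentation.

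First, I would use Gromov's theorem to fix, for each $i \in \{1, \ldots, m\}$, a finite presentation $\langle Y_i \mid \mathcal{R}_i \rangle$ of $S_i$ whose Dehn function is linear with constant $C_i$. Starting from a finite relative presentation $\langle \cX, \Hl \cup \{S_i\} \mid \mathcal{R} \rangle$ of $G$ with linear relative isoperimetric constant $C$, I observe that only finitely many $S_i$-letters occur in the finite set $\mathcal{R}$; fixing a $Y_i$-word representative of each such letter produces a finite set $\mathcal{R}^*$ of relators over the alphabet $\cX \cup \bigcup_i Y_i \cup \cH$. I then propose the finite relative presentation $\mathcal{P}' = \langle \cX \cup \bigcup_i Y_i, \Hl \mid \mathcal{R}^* \cup \bigcup_i \mathcal{R}_i \rangle$. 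That $\mathcal{P}'$ presents $G$ relative to $\Hl$ follows from the universal property of free products: the kernel of the natural surjection from $F' := (\ast_\lambda H_\lambda) \ast F(\cX \cup \bigcup_i Y_i)$ onto $G$ coincides with the normal closure of $\mathcal{R}^* \cup \bigcup_i \mathcal{R}_i$.

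For the isoperimetric inequality, suppose $w$ is a word over the new alphabet of length $n$ representing $1$ in $G$. I would first collapse every maximal $Y_i$-subword of $w$ to a single letter of $S_i \setminus \{1\}$ (deleting it at the cost of one $\mathcal{R}_i$-relator if it represents the identity), obtaining a word $\tilde w$ over the old alphabet with $\|\tilde w\| \le n$. The original inequality then yields $\tilde w = \prod_{j=1}^k f_j^{-1} R_j f_j$ in the old free product $F^* := (\ast_\lambda H_\lambda) \ast (\ast_i S_i) \ast F(\cX)$, with $k \le Cn$, $R_j \in \mathcal{R}$, and $f_j \in F^*$. I would next lift this identity to $F'$ by replacing each $S_i$-letter appearing in an $R_j$ by its fixed $Y_i$-rewriting (turning $R_j$ into $R_j^* \in \mathcal{R}^*$) and each $f_j$ by a $Y_i$-lift $\hat f_j$ using geodesic $Y_i$-words for the $S_i$-letters in $f_j$.

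The main obstacle is the bookkeeping for the $\mathcal{R}_i$-contributions. The lifted product $\prod \hat f_j^{-1} R_j^* \hat f_j$ equals $w$ in $G$ but differs from $w$ in $F'$ by an element of $\ker(F' \to F^*)$, which is the normal closure of $\bigcup_i \mathcal{R}_i$. To bound the number of $\mathcal{R}_i$-relators needed, I would reorganize the product so that each $S_i$-cancellation in $F^*$ corresponds to a null-homotopic $Y_i$-word whose length is controlled by the $S_i$-geodesic length of the cancelled segment, and then invoke the linear Dehn function of $S_i$ to fill it with linearly many $\mathcal{R}_i$-cells. The delicate point is that the conjugators $f_j$ may carry unbounded $S_i$-content, so the argument must amortize the cost of lifting them; the saving grace is that every $S_i$-letter which survives in the final product must actually appear either in $w$ or in one of the $\le Cn$ relators, and both sources are controlled by $O(n)$. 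A conceptually cleaner, though heavier, alternative is to invoke Bowditch's characterization of relative hyperbolicity via fine hyperbolic graphs and show that replacing the cones over $gS_i$-cosets in the coned-off Cayley graph by $\delta_i$-hyperbolic Cayley graphs of $S_i$ preserves both hyperbolicity and fineness at the remaining $gH_\lambda$-cosets, thereby avoiding the explicit isoperimetric bookkeeping altogether.
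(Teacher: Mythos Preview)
The paper does not give its own proof of this lemma; it is simply quoted from Osin's memoir \cite{Osin-RHG} as Theorem~2.40 and used as a black box. So there is no ``paper's proof'' to compare against.

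Your outline is essentially the argument Osin gives in \cite{Osin-RHG}. The one place where your sketch is genuinely incomplete is exactly the point you flag yourself: controlling the $\mathcal{R}_i$-cost of lifting the identity $\tilde w = \prod f_j^{-1} R_j f_j$ from $F^*$ to $F'$. The amortization idea you describe (``every $S_i$-letter which survives must appear either in $w$ or in one of the $\le Cn$ relators'') is not quite right as stated, because during the free-product reduction in $F^*$ many intermediate $S_i$-syllables can appear and cancel, and each such cancellation is an $S_i$-relation that must be filled. The clean way to carry this out is to work with van Kampen diagrams rather than products of conjugates: a minimal relative diagram for $\tilde w$ has at most $Cn$ $\mathcal{R}$-cells together with some $S_i$-cells, and Osin proves a structural lemma (roughly, the total boundary length of all $S_i$-cells is bounded linearly by $n$ plus the total perimeter of the $\mathcal{R}$-cells) which gives exactly the linear bound you need on the $S_i$-area after applying the linear Dehn functions $C_i$. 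Your alternative via Bowditch's fine-graph characterization is also a valid route and is arguably cleaner, as you say.
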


%

\begin{lemma}[\cite{Osin-RDF}, Cor. 1.4]\label{lem:HNN-rel_hyp}
Let $G$ be a group which is hyperbolic relative to a collection of
subgroups $\Hl \cup \{K \}$. Suppose that $K$ is finitely generated and
there is a monomorphism $\alpha \colon K\to H_\nu$ for some $\nu \in \Lambda$.
Then the HNN-extension $\langle G, t~\|~ txt^{-1}=\alpha (x), \, x\in K\rangle$ is hyperbolic
with respect to $\Hl$.
\end{lemma}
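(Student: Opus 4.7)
The plan is to exhibit a finite relative presentation of the HNN-extension $\tilde G=\langle G,t\,\|\,tkt^{-1}=\alpha(k),\,k\in K\rangle$ with respect to $\Hl$, and to verify that it satisfies a linear relative isoperimetric inequality by combining Britton's Lemma for HNN-extensions with the linear relative isoperimetric inequality already available for $G$ with respect to $\Hl\cup\{K\}$.

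Fix a finite relative presentation $G=\langle\cX,\,\Hl\cup\{K\}\,\|\,\mathcal R\rangle$ with linear isoperimetric constant $C>0$, and choose a finite generating set $k_1,\dots,k_n$ of $K$. Let $\mathcal R''$ be the finite set obtained from $\mathcal R$ by substituting every $K$-letter $k$ appearing in some $R\in\mathcal R$ by the word $t^{-1}\alpha(k)t$; note that $\alpha(k)\in H_\nu$ is a single letter in the new alphabet $\cX\cup\{t\}\cup\cH$. I claim $\tilde G=\langle\cX\cup\{t\},\,\Hl\,\|\,\mathcal R''\rangle$. Indeed, over the ambient free product $\tilde F=(\ast_\lambda H_\lambda)*F(\cX\cup\{t\})$, the substitution $k\mapsto t^{-1}\alpha(k)t$ converts each HNN relator $tkt^{-1}\alpha(k)^{-1}$ into a word that reduces trivially (by cancelling $tt^{-1}$ pairs and multiplying in the peripheral $H_\nu$), and it converts $\mathcal R$ into $\mathcal R''$. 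Moreover, any relation $w(k_1,\dots,k_n)=1$ of $K$ becomes $t^{-1}\alpha(w)t=1$ after substitution, which holds freely in $\tilde F$ since $\alpha(w)=1$ in $H_\nu$; hence no additional relators are needed.

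I then verify the linear relative isoperimetric inequality by induction on the number $N_t(w)$ of $t^{\pm 1}$-occurrences in a word $w$ of length $\ell$ representing $1$ in $\tilde G$. If $N_t(w)=0$, then $w$ contains no $K$-letters and no $t$, and (as the base of an HNN-extension embeds) $w=1$ in $G$; the $G$-isoperimetric inequality yields $w=\prod_{j=1}^{k}f_j^{-1}R_j^{\pm 1}f_j$ in the ambient free product with $k\le C\ell$, and substituting $k\mapsto t^{-1}\alpha(k)t$ throughout delivers the required expression over $\mathcal R''$ with the same count. If $N_t(w)>0$, Britton's Lemma produces an innermost pinch $t^\epsilon u t^{-\epsilon}$ in $w$ with $u$ a $t$-free subword representing $\bar u$ in the appropriate edge subgroup. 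I apply the $G$-isoperimetric inequality to $u\bar u^{-1}$ (of length $|u|+1$ in the $G$-alphabet, in which $\bar u$ is a single $K$-letter), substitute $k\mapsto t^{-1}\alpha(k)t$, and pre- and post-multiply by $t^\epsilon,t^{-\epsilon}$ — a free move that introduces no new relators — to conclude that $t^\epsilon u t^{-\epsilon}\alpha(\bar u)^{\mp 1}$ is a product of at most $C(|u|+1)$ conjugates of $\mathcal R''$-relators. Replacing the pinch in $w$ by the single letter $\alpha(\bar u)^{\pm 1}$ yields a shorter word $w'$ with $N_t(w')<N_t(w)$ and $|w'|\le|w|-|u|-1$, to which the induction hypothesis applies; the total count is bounded by $C(|u|+1)+C'(|w|-|u|-1)\le C'\ell$ for any $C'\ge C$.

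The obstacle I had anticipated was that any direct accounting of the Britton step seems to require expressing $\bar u$ as a word in the generators $k_1,\dots,k_n$, and this $k_i$-length has no a priori linear bound in $|u|$. The resolution is the indirect route above: apply the $G$-isoperimetric inequality while $K$ is still peripheral so that $\bar u$ is a single letter of length one, and perform the substitution $k\mapsto t^{-1}\alpha(k)t$ only afterwards, producing a controlled three-symbol replacement for each of the finitely many $K$-letters in $\mathcal R$. The final isoperimetric constant for $\tilde G$ is then a function only of $C$ and the maximal length of relators in $\mathcal R$.
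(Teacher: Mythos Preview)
The paper does not give its own proof of this lemma; it is quoted as \cite[Cor.~1.4]{Osin-RDF} without argument. So there is nothing in the paper to compare your sketch against, and I assess it on its own.

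Your strategy --- apply the linear relative isoperimetric inequality of $G$ while $K$ is still peripheral (so that any element of $K$ counts as a single letter), and only afterwards substitute $k\mapsto t^{-1}\alpha(k)t$ --- is exactly the right idea for circumventing the unbounded-$K$-word-length obstacle you flagged. The presentation claim and the base case $N_t(w)=0$ are fine.

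There is, however, a genuine gap in the inductive step when the Britton pinch has sign $\epsilon=-1$. In that case the $t$-free subword $u$ represents an element $\bar u\in\alpha(K)\subset H_\nu$, so $\bar u$ is a single $H_\nu$-letter, not a $K$-letter; your substitution leaves $u\bar u^{-1}$ unchanged, and the element that the pinch $t^{-1}ut$ equals in $\tilde G$ is $\alpha^{-1}(\bar u)\in K$, whose only expression in the target alphabet $\cX\cup\{t\}\cup\cH$ is the length-$3$ word $t^{-1}\bar u\,t$. The replacement you describe is therefore not a ``single letter'': one gets $|w'|=|w|-|u|+1$ and $N_t(w')=N_t(w)$, and when $|u|=1$ with $u$ already equal to that $H_\nu$-letter there is no progress in either measure. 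Such words occur --- take $w=t^{-1}\bar v\,t\cdot g$ with $\bar v\in\alpha(K)$ a single $H_\nu$-letter and $g$ any word over $\cX\cup\cH$ representing $\alpha^{-1}(\bar v)^{-1}\in K\le G$ --- and on them your induction (whether on $N_t$ or on $|w|$) stalls. The argument as written is incomplete; to repair a Britton-style proof you need a finer complexity that also drops in the $\epsilon=-1$ case, or else pass to van Kampen diagrams with $t$-corridors, which treats the two signs symmetrically and is the route taken in \cite{Osin-RDF}.
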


In \cite{Osin-SCT} Osin introduced the following notion: a subgroup $S \le G$ is {\it suitable}
if there exist two elements $g_1,g_2 \in S^0$ such that
$g_1 \stackrel{G}{\not\approx} g_2$ and $E_G(g_1) \cap E_G(g_2)=\{1\}$.

For any $S \le G$ with $S^0 \neq \emptyset$,
one sets
\begin{equation} \label{eq:E_G}
E_G(S)=\bigcap_{g \in S^0} E_G(g)
\end{equation}
which is obviously a subgroup of $G$ normalized by $S$.
Note that $E_G(S)=\{1\}$ if the subgroup $S$
is suitable in $G$.
As shown in \cite[Lemma 3.3]{SQ}, if $S$ is non-elementary and $S^0 \neq \emptyset$ then $E_G(S)$ is the unique
maximal finite subgroup of $G$ normalized by $S$.

\begin{lemma} \label{lem:hyp_suit_sbgp_free_prod} Let $\{H\}_{\lambda \in \Lambda}$ be a family of groups and
let $F$ be a torsion-free non-elementary word hyperbolic group.
Then the free product $G=(*_{\lambda \in \Lambda} H_\lambda ) *F$ is hyperbolic relative to $\Hl$ and $F$ is a suitable subgroup of $G$.
\end{lemma}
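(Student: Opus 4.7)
The plan is to split the statement into two parts and handle each in turn. For the relative-hyperbolicity claim, I would first apply Remark~\ref{rem:free_prod_rel_hyp}(a), viewing $F$ as an additional free factor, to conclude that $G$ is hyperbolic relative to $\Hl\cup\{F\}$; since $F$ is classically word hyperbolic by hypothesis, Lemma~\ref{lem:exhyp} then allows $F$ to be dropped from this peripheral collection, leaving relative hyperbolicity of $G$ with respect to $\Hl$ alone.

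For suitability of $F$, the plan is to produce two witnesses $g_1,g_2\in F^0$ with $g_1\stackrel{G}{\not\approx}g_2$ and $E_G(g_1)\cap E_G(g_2)=\{1\}$. Because $F$ is non-elementary torsion-free word hyperbolic, it contains infinitely many conjugacy classes of maximal cyclic subgroups, so I can pick $g_1,g_2\in F$ of infinite order with $g_1\stackrel{F}{\not\approx}g_2$. Infinite order in $G$ is automatic from torsion-freeness of $F$, and both $g_i$ are hyperbolic in $G$ (not conjugate in $G$ to any element of any $H_\lambda$) because the classical normal form theorem for free products forbids conjugacies between non-trivial elements of different factors; hence $g_1,g_2\in F^0$. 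The same normal form theorem shows that conjugacy in $G$ of two non-trivial elements of $F$ forces conjugacy inside $F$; applying this to the powers $g_1^k$ and $g_2^l$ upgrades $g_1\stackrel{F}{\not\approx}g_2$ to $g_1\stackrel{G}{\not\approx}g_2$.

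The remaining and most delicate point is $E_G(g_1)\cap E_G(g_2)=\{1\}$. My plan is to prove the stronger containment $E_G(g_i)\subseteq E_F(g_i)$ by a syllable-length argument: given $f\in E_G(g_i)$, write $f=x_1\cdots x_k$ in reduced normal form for $G=F\ast(\ast_\lambda H_\lambda)$ and inspect the normal form of $fg_i^nf^{-1}=g_i^{\pm n}$. When $x_k\in F$, the middle piece $x_kg_i^nx_k^{-1}$ is a non-trivial element of $F$ flanked by syllables from some $H_\lambda$, giving a reduced form of length exactly $2k-1$; when $x_k$ lies in some $H_\lambda$, no cancellation occurs at the middle and the reduced form has length $2k+1$. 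Since $g_i^{\pm n}$ has normal form length one, either $k=0$ or $k=1$, and in both cases $f\in F$; the defining relation then places $f\in E_F(g_i)$. Because $F$ is torsion-free word hyperbolic, $E_F(g_i)$ is infinite cyclic, and non-commensurability of $g_1,g_2$ in $F$ immediately forces $E_F(g_1)\cap E_F(g_2)=\{1\}$, since a common non-trivial element would produce equal non-zero powers of $g_1$ and $g_2$, contradicting non-commensurability. The main obstacle is this syllable-length calculation; the rest is routine assembly of Remark~\ref{rem:free_prod_rel_hyp}, Lemma~\ref{lem:exhyp}, and the classical conjugacy theorem for free products.
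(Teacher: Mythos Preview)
Your proposal is correct and follows essentially the same route as the paper: both obtain relative hyperbolicity via Remark~\ref{rem:free_prod_rel_hyp} and Lemma~\ref{lem:exhyp}, pick two non-commensurable infinite-order elements of $F$, observe they are hyperbolic and non-commensurable in $G$, and deduce $E_G(g_i)\le F$ so that cyclicity of $E_F(g_i)$ forces trivial intersection. The paper simply asserts $E_G(x)=E_F(x)$ and the hyperbolicity/non-commensurability in $G$ as ``evident'' (and cites \cite[Lemma~3.2]{Olsh2} for the existence of $g_1,g_2$), whereas you spell these out via the normal-form/syllable-length argument for free products; this is added detail rather than a different strategy.
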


\begin{proof}
Indeed, $G$ is hyperbolic relative to $\Hl$ by Remark \ref{rem:free_prod_rel_hyp} and Lemma \ref{lem:exhyp}.
Since $F$ is non-elementary, there are elements of infinite order $x,y \in F$ such that $x \stackrel{F}{\not\approx} y$
(see, for example, \cite[Lemma 3.2]{Olsh2}). Evidently, $x$ and $y$ are hyperbolic elements of $G$ that are not commensurable
with each other, and the subgroups $E_G(x)=E_F(x) \le F$, $E_G(y)=E_F(y) \le F$ are cyclic (as elementary subgroups of a torsion-free group).
Hence $E_G(x) \cap E_G(y)=\{1\}$, and thus $F$ is suitable in $G$.
\end{proof}

\begin{lemma}[\cite{Osin-SCT}, Lemma 2.3] \label{lem:suit-inf_many_non-comm} Suppose that $G$ is a group
hyperbolic relative to a family of subgroups
$\{H_\lambda\}_{\lambda \in \Lambda}$ and $S\le G$ is a suitable subgroup. Then one can find infinitely many pairwise non-commensurable
(in $G$) elements $g_1,g_2, \dots \in S^0$ such that $E_G(g_i) \cap E_G(g_j)=\{1\}$ for all $i\neq j$.
\end{lemma}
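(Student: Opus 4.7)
The plan is to proceed by induction on the length of the sequence of elements constructed. The base case, a pair $g_1, g_2 \in S^0$ with $g_1 \stackrel{G}{\not\approx} g_2$ and $E_G(g_1) \cap E_G(g_2) = \{1\}$, is handed to us directly by the definition of ``suitable''.

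For the inductive step, suppose that $g_1, \dots, g_n \in S^0$ with the required properties have already been produced. I would look for $g_{n+1}$ among candidates of the form $v_k = g_1 g_2^k$ for $k \in \N$; each such $v_k$ lies in $S$. A standard argument in relatively hyperbolic groups using Lemma \ref{lem:Eg} together with the hypothesis $E_G(g_1) \cap E_G(g_2) = \{1\}$ shows that for all but finitely many $k$, $v_k$ is hyperbolic of infinite order, so $v_k \in S^0$. Moreover, geodesic representatives of powers of $v_k$ in the relative Cayley graph $\Gamma (G, \cX \cup \cH)$ spend most of their length traversing the ``axis'' of $g_2$, and the translation length of $v_k$ grows roughly linearly in $k$, so distinct large values of $k$ produce elements pointing in distinct asymptotic directions.

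The heart of the proof is verifying that, after discarding finitely many values of $k$, each of the constraints ``$v_k \stackrel{G}{\not\approx} g_i$'' and ``$E_G(v_k) \cap E_G(g_i) = \{1\}$'' holds for every $i \le n$. For the commensurability condition: $v_k \stackrel{G}{\approx} g_i$ would force a positive power of $v_k$ to be conjugate into the virtually cyclic group $E_G(g_i)$, and comparing translation lengths in the relative metric shows this can happen for at most one $k$ per $i$. For the $E_G$-intersection condition: by the description \eqref{eq:elem}, any non-trivial $f \in E_G(v_k) \cap E_G(g_i)$ would simultaneously normalize positive powers of both $v_k$ and $g_i$; the virtually cyclic structure of elementary subgroups together with the linear growth of translation length then restricts $f$ to a finite set of possibilities, each of which excludes only boundedly many $k$.

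The main obstacle will be rigorously pinning down these ``for $k$ sufficiently large'' arguments, which requires the geometric input developed in \cite{Osin-RHG, Osin-ESBG}: thinness of relative geodesic triangles, the Bounded Coset Penetration property, and the identification of $E_G(g)$ with the stabilizer in $G$ of the pair of limit points of $\langle g \rangle$ at the Bowditch boundary. Once the finite exceptional set of ``bad'' $k$'s is excluded, setting $g_{n+1} = v_k$ for the smallest admissible $k$ completes the inductive step, and iteration produces the desired infinite sequence.
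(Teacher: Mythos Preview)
The paper does not prove this lemma at all: it is quoted verbatim from \cite{Osin-SCT} and used as a black box, so there is no ``paper's own proof'' to compare against. What I can do is assess your sketch on its merits.

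Your inductive strategy is reasonable and the choice of candidates $v_k = g_1 g_2^k$ is standard, but the justification you give for the non-commensurability step is incorrect. You write that ``$v_k \stackrel{G}{\approx} g_i$ would force a positive power of $v_k$ to be conjugate into the virtually cyclic group $E_G(g_i)$, and comparing translation lengths in the relative metric shows this can happen for at most one $k$ per $i$.'' Translation length does not separate commensurability classes: if $v_k^m$ is conjugate to $g_i^l$ then $|m|\,\tau(v_k) = |l|\,\tau(g_i)$, and since $m,l$ range over all nonzero integers this imposes no constraint on $\tau(v_k)$ whatsoever. Every positive rational ratio is achievable, so growth of $\tau(v_k)$ with $k$ rules out nothing. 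The actual obstruction is the one you allude to with ``distinct asymptotic directions'': $v_k \stackrel{G}{\approx} g_i$ is equivalent to $E_G(v_k)$ being conjugate to $E_G(g_i)$, i.e.\ to the limit set of $\langle v_k\rangle$ lying in the $G$-orbit of the limit set of $\langle g_i\rangle$, and it is this that must be excluded for large $k$. The cleanest way to do this is to invoke Lemma~\ref{lem:Eg} to enlarge the peripheral structure by $E_G(g_1),\dots,E_G(g_n)$ and then argue that $v_k$ is hyperbolic for this new structure when $k$ is large; commensurability with any $g_i$ would then make $v_k$ parabolic, a contradiction.

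Your argument for $E_G(v_k)\cap E_G(g_i)=\{1\}$ is likewise too vague. Once non-commensurability is established, the intersection is finite, and the real point is that the maximal finite normal subgroup of $E_G(v_k)$ is contained in $E_G(S)$, which is trivial because $S$ is suitable (this is exactly the content of \cite[Lemma~3.3]{SQ} cited just above the statement). You should invoke that directly rather than the hand-wave about ``restricting $f$ to a finite set of possibilities''.
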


The following theorem was proved by Osin in \cite{Osin-SCT} using the theory of small cancellation over relatively hyperbolic
groups, and represents our main tool for obtaining new quotients of such groups having
a number of prescribed properties:

\begin{thm}[\cite{Osin-SCT}, Thm. 2.4]\label{thm:main_SCT}
Let $G$ be a torsion-free group hyperbolic relative to a collection of subgroups
$\Hl $, let $S$ be a suitable subgroup of $G$, and let $T, U$ be arbitrary finite subsets of
$G$. Then there exist a group $G_1$ and an epimorphism $\eta \colon G\to G_1$ such
that:
\begin{itemize}
\item[(i)] The restriction
of $\eta$ to  $\bigcup_{\lambda \in \Lambda} H_\lambda \cup U$ is injective, 
and the group $G_1$ is hyperbolic relative to the
collection $\{\eta (H_\lambda )\}_{\lambda \in \Lambda }$;

\item[(ii)] for every $t\in T$, we have $\eta (t)\in \eta (S)$;

\item[(iii)] $\eta(S)$ is a suitable subgroup of $G_1$;

\item[(iv)] $G_1$ is torsion-free;

\item[(v)] the kernel $\ker(\eta)$ of $\eta$ is generated (as a normal subgroup of $G$) by a finite collection
of elements belonging to $T\cdot S$.
\end{itemize}
\end{thm}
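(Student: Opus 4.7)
The plan is to realize $G_1$ as a small cancellation quotient of $G$ with carefully designed relators tailored to deliver every conclusion at once. Since $S$ is suitable, Lemma~\ref{lem:suit-inf_many_non-comm} supplies an infinite family of pairwise non-commensurable hyperbolic elements $g_1, g_2, \ldots \in S^0$ with $E_G(g_i)\cap E_G(g_j)=\{1\}$ for $i\neq j$; because $G$ is torsion-free, each $E_G(g_i)$ is infinite cyclic. These elements will be the building blocks of the relators.

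Enumerate $T=\{t_1,\ldots,t_n\}$. For each $i$, choose a word
\[
w_i = g_{k(i,1)}^{N_{i,1}}\, g_{k(i,2)}^{N_{i,2}} \cdots g_{k(i,m)}^{N_{i,m}} \in S,
\]
where the indices $k(i,\cdot)$ are drawn from disjoint blocks (so that no $g_j$ appears in two different $w_i$'s, and the $g_j$'s within a single $w_i$ are distinct), and reserve a further infinite subfamily $g_{j_1}, g_{j_2}, \ldots$ of unused indices. Set $r_i = t_i^{-1} w_i \in T \cdot S$. By taking each exponent $N_{i,j}$ sufficiently large compared with the relative lengths of the elements of $T \cup U$ and the relevant hyperbolicity constants of $G$, and by exploiting the triviality of $E_G(g_i) \cap E_G(g_j)$ together with the non-commensurability of distinct $g_j$'s, the symmetrized collection arising from $\{r_1,\ldots,r_n\}$ can be made to satisfy a small cancellation condition of arbitrarily small parameter in the sense of Osin's theory of small cancellation over relatively hyperbolic groups.

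Define $G_1 = G / \langle\!\langle r_1, \ldots, r_n \rangle\!\rangle$ and let $\eta \colon G \to G_1$ be the quotient map. Conclusion~(v) is immediate. Conclusion~(ii) holds because $r_i = 1$ in $G_1$ forces $\eta(t_i) = \eta(w_i) \in \eta(S)$. The remaining conclusions follow from the general small cancellation package. For~(i), the ``size'' of each relator is made larger than any element of $U$ and any nontrivial element of a peripheral subgroup that must be preserved, so $\bigcup_{\lambda} H_\lambda \cup U$ embeds under $\eta$, and the standard argument shows $G_1$ is hyperbolic relative to $\{\eta(H_\lambda)\}$. For~(iv), torsion-freeness of $G_1$ follows because $G$ is torsion-free and one can arrange that no $r_i$ is a proper power by varying the exponents. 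For~(iii), the reserved elements $\eta(g_{j_1})$ and $\eta(g_{j_2})$ remain non-commensurable hyperbolic elements of $\eta(S)$ with $E_{G_1}(\eta(g_{j_1})) \cap E_{G_1}(\eta(g_{j_2})) = \{1\}$, witnessing suitability of $\eta(S)$.

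The main obstacle is verifying the small cancellation condition: one must bound uniformly the length of any common ``piece'' shared between two distinct conjugates of the $r_i^{\pm 1}$ and show it is short relative to the relator length. This is where the malnormality-type properties of the $E_G(g_i)$ furnished by Lemma~\ref{lem:Eg} and the relative hyperbolicity of $G$ do the real work; once the small cancellation condition is established, all five conclusions are standard consequences.
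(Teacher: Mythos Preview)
Your sketch is in the right spirit, but you should note that the paper does \emph{not} actually prove this theorem. It is quoted from Osin's paper \cite{Osin-SCT} (Theorem~2.4 there), and the only argument the present paper supplies is a short paragraph justifying two minor additions to Osin's original formulation: the injectivity on the extra finite set $U$, and the explicit description (v) of the generators of $\ker(\eta)$. The paper dispatches these by pointing to the explicit form of the relators in Osin's proof (which are visibly products $t\cdot s$ with $t\in T$, $s\in S$, giving (v)) and to part~2 of Lemma~5.1 in \cite{Osin-SCT}, which gives injectivity on elements of bounded length $N=\max\{|h|_{\cX\cup\cH}:h\in U\}+1$, hence on $\bigcup_\lambda H_\lambda\cup U$.

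Your outline is broadly faithful to how Osin's original proof goes: one does build relators $r_i=t_i^{-1}w_i$ with each $w_i$ a long word in high powers of pairwise non-commensurable hyperbolic elements of $S$, and the conclusions (i)--(iv) are then consequences of the small cancellation machinery. Two points worth tightening if you were to write this out in full: first, the relators must satisfy the specific condition $C_1(\varepsilon,\mu,\lambda,c,\rho)$ of \cite{Osin-SCT}, and establishing this requires more than just ``long pieces are impossible'' --- one passes to a larger peripheral structure by adjoining the $E_G(g_j)$ (via Lemma~\ref{lem:Eg}) and then invokes the quasi-geodesity and piece estimates proved in \cite{Osin-SCT}. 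Second, for (iii) one needs not only that the reserved $\eta(g_{j_1}),\eta(g_{j_2})$ remain non-commensurable, but that they remain \emph{hyperbolic} in $G_1$ with respect to the original family $\{\eta(H_\lambda)\}$; this again comes from first working relative to the enlarged peripheral family and then removing the extra hyperbolic peripherals via Lemma~\ref{lem:exhyp}. These are exactly the technical steps carried out in \cite{Osin-SCT}, so your proposal is a reasonable summary of that argument rather than an alternative to anything in the present paper.
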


We have slightly changed the original formulation of the above theorem from \cite{Osin-SCT}, demanding the injectivity on
$V=\bigcup_{\lambda \in \Lambda} H_\lambda \cup U$ (instead of just $\bigcup_{\lambda \in \Lambda} H_\lambda$) and
adding the last point concerning the generators of the kernel. The latter follows from the explicit form of the relations, imposed
on $G$ (see the proof of Thm. 2.4 in \cite{Osin-SCT}), and the former -- from part 2 of Lemma 5.1 
in \cite{Osin-SCT} and the fact that any element from $V$ has length (in the alphabet $\cX \cup \cH$) at most $N$, where
$N= \max\{|h|_{\cX \cup \cH}~:~h \in U\}+1$.

\section{Groups with finitely many conjugacy classes}\label{sec:groups_with_fCC}

\begin{lemma}\label{lem:HNN-conj} Let $G$ be a group and let $x_1,x_2,x_3,x_4 \in G$ be elements of infinite order such that
$x_1 \stackrel{G}{\not\approx} x_i$, $i=2,3,4$. Let $H=\langle G,t~\|~tx_3t^{-1}=x_4 \rangle$ be the HNN-extension of $G$ with associated
cyclic subgroups generated by $x_3$ and $x_4$. Then $x_1 \stackrel{H}{\not\approx} x_2$.
\end{lemma}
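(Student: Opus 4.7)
The plan is a direct application of Britton's Lemma to the HNN extension $H$. Suppose for contradiction that $x_1 \stackrel{H}{\approx} x_2$, so there exist nonzero integers $k,l$ and an element $h \in H$ with $hx_1^k h^{-1} = x_2^l$. Write $h$ in reduced HNN normal form
\[
h = g_0 t^{\epsilon_1} g_1 t^{\epsilon_2} \cdots t^{\epsilon_n} g_n, \qquad g_i \in G,\ \epsilon_i \in \{\pm 1\},
\]
having no pinches (no subword $t g_i t^{-1}$ with $g_i \in \langle x_3\rangle$, nor $t^{-1} g_i t$ with $g_i \in \langle x_4\rangle$). If $n=0$, then $h \in G$ and the conjugation happens inside $G$, giving $x_1 \stackrel{G}{\approx} x_2$, contrary to assumption; so we may assume $n \ge 1$.

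Now substitute into $hx_1^kh^{-1}$ to obtain the word
\[
g_0\, t^{\epsilon_1}\, g_1\, \cdots\, t^{\epsilon_n}\,(g_n x_1^k g_n^{-1})\,t^{-\epsilon_n}\,\cdots\,t^{-\epsilon_1}\, g_0^{-1},
\]
whose sequence of $t$-exponents is $\epsilon_1,\ldots,\epsilon_n,-\epsilon_n,\ldots,-\epsilon_1$. A pinch can occur only where two consecutive exponents have opposite signs. On the left half these are exactly the consecutive pairs of $h$, and on the right half they are the consecutive pairs of $h^{-1}$; since $h$ is reduced, so is $h^{-1}$, and hence no pinches appear in either half. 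Consequently the only candidate pinch is the central pair $t^{\epsilon_n}(g_n x_1^k g_n^{-1})t^{-\epsilon_n}$.

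If the central pair is not a pinch, then the entire word is reduced with $t$-length $2n>0$, and by Britton's Lemma it cannot represent any element of $G$; but it equals $x_2^l \in G$, a contradiction. Thus the center must pinch, which means $g_n x_1^k g_n^{-1} \in \langle x_3\rangle$ (when $\epsilon_n=1$) or $g_n x_1^k g_n^{-1} \in \langle x_4\rangle$ (when $\epsilon_n=-1$). Because $x_1$ has infinite order, $g_n x_1^k g_n^{-1}$ is nontrivial, so it equals $x_3^m$ or $x_4^m$ with $m \ne 0$. This yields $x_1 \stackrel{G}{\approx} x_3$ or $x_1 \stackrel{G}{\approx} x_4$, contradicting the hypotheses and completing the proof.

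There is no genuine obstacle here: the only point demanding attention is the bookkeeping that certifies that all non-central $t$-pairs in $hx_1^kh^{-1}$ are non-pinches, which is immediate from the reducedness of $h$. The hypothesis that $x_1$ has infinite order is used precisely to ensure the pinched element $g_nx_1^kg_n^{-1}$ corresponds to a \emph{nonzero} power of $x_3$ or $x_4$, so that commensurability is actually witnessed.
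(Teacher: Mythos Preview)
Your proof is correct and follows essentially the same route as the paper: assume a commensurability relation in $H$, write the conjugator in reduced HNN normal form, and apply Britton's Lemma to force the central $G$-syllable $g_n x_1^k g_n^{-1}$ into one of the associated cyclic subgroups, contradicting the non-commensurability of $x_1$ with $x_3$ and $x_4$. The only cosmetic difference is that the paper groups consecutive $t$'s into integer powers $t^{\epsilon_j}$ with $\epsilon_j\in\Z\setminus\{0\}$, whereas you use single letters $\epsilon_i\in\{\pm1\}$; your version is slightly more explicit about why no non-central pinch can occur and about where the infinite-order hypothesis on $x_1$ is used.
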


\begin{proof} Arguing by contradiction, assume that $h x_1^l h^{-1} x_2^{m}=1$ for some $h \in H$, $l,m \in \Z\setminus \{0\}$.
The element $h$ has a
reduced presentation of the form $$h=g_0t^{\epsilon_1}g_1t^{\epsilon_2}\dots t^{\epsilon_k}g_k$$ where
$g_0,\dots,g_k \in G$, $\epsilon_1,\dots, \epsilon_k \in \Z \setminus\{0\}$, and
$$\left\{ \begin{array}{rcl} g_j \notin \langle x_3 \rangle & \mbox{ if } & 1\le j \le k-1 \mbox{ and } \epsilon_j>0, \epsilon_{j+1}<0 \\
g_j \notin \langle x_4 \rangle & \mbox{ if } & 1\le j \le k-1 \mbox{ and } \epsilon_j<0, \epsilon_{j+1}>0
\end{array}\right. .$$
By the assumptions, $x_1 \stackrel{G}{\not \approx} x_2$ hence $k\ge 1$, and in the group $H$ we have
\begin{equation} \label{eq:eq_to_1} h x_1^l h^{-1} x_2^{m}=g_0t^{\epsilon_1}g_1t^{\epsilon_2}\dots t^{\epsilon_k}g_k x_1^l
g_k^{-1}t^{-\epsilon_k}\dots t^{-\epsilon_2} g_1^{-1} t^{-\epsilon_1}\tilde g_0=1,\end{equation}
where $\tilde g_0=g_0^{-1}x_2^m \in G$. By Britton's Lemma (see \cite[IV.2]{L-S}), the left hand side in \eqref{eq:eq_to_1}
can not be reduced, and this can happen only if
$g_k x_1^l g_k^{-1}$  belongs to either  $\langle x_3 \rangle$ or $\langle x_4 \rangle$ in $G$, which would contradict the assumptions.
Thus the lemma is proved.
\end{proof}

\begin{df} Suppose that $G$ is a group and $X_i \subset G$, $i \in I$, is a family of subsets.
We shall say that $X_i$, $i \in I$, are {\it independent} if no element of $X_i$ is commensurable with an element of
$X_j$ whenever $i \neq j$, $i,j \in I$.
\end{df}

\begin{lemma} \label{lem:HNN-emb} Assume that $G$ is a countable torsion-free group, $n\in \N$, $n \ge 2$,
and non-empty subsets $X_i \subset G \setminus \{1\}$, $i=1,\dots, n-1$, are independent in $G$.
Then $G$ can be (isomorphically) embedded into a countable torsion-free group $M$ in such a way that $M$ has {\ncc} and
the subsets $X_i$, $i=1,\dots,n-1$, remain independent in $M$.
\end{lemma}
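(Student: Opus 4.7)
The plan is to realise $M$ as the union of an ascending chain $G = G_0 \subseteq G_1 \subseteq G_2 \subseteq \dots$ of countable torsion-free groups, where each $G_k$ is obtained from $G_{k-1}$ by a single HNN-extension that conjugates one chosen element of $G_{k-1}$ to one of $n-1$ fixed ``target'' elements. I fix once and for all representatives $x_l \in X_l$ for $l = 1, \dots, n-1$, together with a diagonal enumeration of $\bigcup_k G_k$ that eventually visits every element of every $G_k$. The invariant to be maintained at every stage is: $G_k$ is countable and torsion-free, and the subsets $X_1, \dots, X_{n-1}$ are pairwise independent in $G_k$ (which in particular forces $x_l \stackrel{G_k}{\not\approx} x_m$ for $l \ne m$).

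At stage $k$, I take the next element $g$ supplied by the enumeration. If $g = 1$ or $g$ is already conjugate in $G_{k-1}$ to some $x_l$, set $G_k = G_{k-1}$. Otherwise I form
\[
G_k = \langle G_{k-1},\, t \,\|\, tgt^{-1} = x_c \rangle
\]
for a carefully chosen target index $c = c(g) \in \{1, \dots, n-1\}$. Since $G_{k-1}$ is torsion-free and $g, x_c \ne 1$, both $\langle g \rangle$ and $\langle x_c \rangle$ are infinite cyclic, so this is a bona fide HNN-extension; it embeds $G_{k-1}$ and again produces a countable torsion-free group (torsion elements of an HNN-extension are conjugate into the base).

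The only delicate point, and the main obstacle, is the design of $c$. The invariant itself forces $g$ to be commensurable in $G_{k-1}$ with elements of at most one $X_l$: two such indices $l \ne l'$ would, by transitivity of $\stackrel{G_{k-1}}{\approx}$, make an element of $X_l$ commensurable with one of $X_{l'}$, violating independence. I therefore set $c$ to be that unique index when it exists, and $c = 1$ otherwise. Then for any $l \ne m$ in $\{1, \dots, n-1\}$ and any $y \in X_l$, $z \in X_m$, at least one of $l, m$ differs from $c$ — say $l \ne c$. The invariant at stage $k-1$ yields $y \stackrel{G_{k-1}}{\not\approx} z$ and $y \stackrel{G_{k-1}}{\not\approx} x_c$ (since $x_c \in X_c$ and $l \ne c$), while the choice of $c$ yields $y \stackrel{G_{k-1}}{\not\approx} g$ (else $l$ would have been an admissible target and we would have set $c = l$). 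Lemma \ref{lem:HNN-conj}, applied with $y, z, g, x_c$ in the roles of $x_1, x_2, x_3, x_4$, then gives $y \stackrel{G_k}{\not\approx} z$, restoring the invariant.

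Finally I set $M = \bigcup_k G_k$: this is countable, torsion-free, and contains $G$. Since any commensurability relation in $M$ is witnessed inside some $G_k$, the subsets $X_l$ remain pairwise independent in $M$. By the diagonal enumeration every non-identity element of $M$ is eventually processed and becomes conjugate in $M$ to some $x_l$, so $M$ has exactly $n$ conjugacy classes. Everything except the uniqueness-forcing choice of $c$ is routine HNN bookkeeping; that uniqueness is precisely what makes Lemma \ref{lem:HNN-conj} applicable simultaneously to all $\binom{n-1}{2}$ pairs at every stage.
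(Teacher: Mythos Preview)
Your proof is correct and follows essentially the same strategy as the paper: both arguments build an ascending union of HNN-extensions, choosing the target $x_c$ to be $x_j$ when $g$ is commensurable with some element of $X_j$ and $x_1$ otherwise, and both invoke Lemma~\ref{lem:HNN-conj} to verify that independence of the $X_i$ is preserved at each stage. The only difference is organisational: the paper uses a two-level tower (first $G \hookrightarrow G_1$ making every element of $G$ conjugate to some $x_j$, then $G_1 \hookrightarrow G_2$, etc.), whereas you collapse this into a single chain via a diagonal enumeration---a standard and equivalent bookkeeping device.
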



\begin{proof}
For each $i=1,\dots,n-1$, fix an element $x_i \in X_i$.
First we embed $G$ into a countable torsion-free group
$G_1$ such that for each non-trivial element $g \in G$ there exist $j\in \{1,\dots,n-1\}$ and $t \in G_1$ satisfying
$t g t^{-1}=x_j$ in $G_1$, and the subsets $X_i$, $i=1,\dots,n-1$, stay independent in $G_1$.

Let $g_1,g_2,\dots$ be an enumeration of all non-trivial elements of $G$. Set $G(0)=G$ and suppose that we have already constructed
the group $G(k)$, containing $G$, so that for each $l \in \{1,\dots,k\}$ there is $j \in \{1,\dots,n-1\}$
such that the element $g_l$ is conjugated in $G(k)$ to $x_j$, and $X_i,i=1,\dots,n-1$, are independent in $G(k)$.

Suppose, at first, that $g_{k+1}$ is commensurable in $G(k)$ with an element of $X_j$ for some $j$.
Then $g_{k+1} \stackrel{G(k)}{\not\approx} h$ for every $\displaystyle h \in \bigcup_{i=1, i \neq j}^{n-1} X_i$.
Define $G(k+1)$ to be the HNN-extension $\langle G(k),t_{k+1}~\|~ t_{k+1}g_{k+1}t_{k+1}^{-1}=x_j \rangle$. By Lemma \ref{lem:HNN-conj}
the subsets $X_i$, $i=1,\dots,n-1$, will remain independent in $G(k+1)$.

Thus we can assume that $g_{k+1}$ is not commensurable with any element from $\bigcup_{i=1}^{n-1} X_i$ in $G(k)$.
According to the induction hypotheses one can apply Lemma \ref{lem:HNN-conj} to the HNN-extension
$$G(k+1)=\langle G(k),t_{k+1}~\|~ t_{k+1}g_{k+1}t_{k+1}^{-1}=x_1 \rangle$$ to see
that the subsets $X_i \subset G \le G(k+1)$, $i=1,\dots,n-1$, are independent in $G(k+1)$.

Now, set $G_1= \bigcup_{k=0}^\infty G(k)$. Evidently $G_1$ has the required properties.
In the same manner, one can embed $G_1$ into a countable torsion-free group $G_2$ so that each non-trivial element of $G_1$
will be conjugated to $x_i$ in $G_2$, for some $i \in \{1,\dots,n-1\}$, and the subsets $X_i, i=1,\dots,n-1$,
continue to be independent in $G_2$.

Proceeding like that we obtain the desired group $M=\bigcup_{s=1}^\infty G_s$. By the construction, $M$ is a torsion-free countable group
which has exactly $n$ conjugacy classes: $[1], [x_1],\dots, [x_{n-1}]$. The subsets $X_i, i=1,\dots,n-1$, are independent in $M$ because
they are independent in $G_s$ for each $s \in \N$.
\end{proof}

\begin{cor} \label{cor:2-b-s} In Lemma \ref{lem:HNN-emb} one can add that the group $M$ is $2$-boundedly simple.
\end{cor}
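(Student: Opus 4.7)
The plan is to modify the construction of $M$ from the proof of Lemma~\ref{lem:HNN-emb} so that, in addition to the conjugacy relations imposed there, for every pair $(i,j)\in\{1,\dots,n-1\}^2$ we also force $x_j$ to equal a product of two conjugates of $x_i$ in the final group. This will suffice: given any nontrivial $x,y\in M$ with $y=cx_ic^{-1}$ and $x=gx_jg^{-1}$, a factorisation $x_j=\alpha\beta$ with $\alpha,\beta$ conjugate to $x_i$ immediately yields $x=(g\alpha g^{-1})(g\beta g^{-1})$ as a product of two conjugates of $y$.

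To arrange such a factorisation for a fixed pair $(i,j)$, starting from the current stage $G(k)$ I would perform three successive extensions: first, form the free product $G'(k)=G(k)*\langle a_{ij}\rangle_\infty$; second, form the HNN-extension $G''(k)=\langle G'(k),\,s_{ij}\mid s_{ij}a_{ij}s_{ij}^{-1}=x_i\rangle$; third, setting $b_{ij}:=a_{ij}^{-1}x_j\in G''(k)$, form $G'''(k)=\langle G''(k),\,t_{ij}\mid t_{ij}b_{ij}t_{ij}^{-1}=x_i\rangle$. All three groups are torsion-free and countable, both $a_{ij}$ and $b_{ij}$ are conjugate to $x_i$ in $G'''(k)$, and $a_{ij}b_{ij}=x_j$ by construction, giving the desired factorisation.

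The main obstacle is checking that the subsets $X_1,\dots,X_{n-1}$ remain independent through these extra steps. In $G'(k)$, a cyclic normal form argument in the free product shows that neither $a_{ij}$ nor $b_{ij}=a_{ij}^{-1}x_j$ is commensurable with any element of any $X_l$: the former lies in a free factor disjoint from the ones containing the $x_l$, while $b_{ij}$ is cyclically reduced of length two, whereas every element of $X_l\cup\{x_i\}$ has cyclically reduced length at most one. Two applications of Lemma~\ref{lem:HNN-conj} then propagate these non-commensurabilities to $G''(k)$, and a further application (with the roles of $x_1$ and $x_2$ in the lemma interchanged in the cases where $l=i$ or $m=i$, since the hypothesis $x_1\not\approx x_1$ is unusable) propagates $x_l\not\approx x_m$ to $G'''(k)$ for every $l\ne m$.

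Finally, I would dovetail the $(n-1)^2$ triples of extensions with the enumeration of group elements used in Lemma~\ref{lem:HNN-emb}, so that in the direct limit $M$ every nontrivial element is still conjugate to some $x_l$ (so $M$ has \ncc) and, for every pair $(i,j)$, the identity $x_j=a_{ij}b_{ij}$ realises $x_j$ as a product of two conjugates of $x_i$; the opening observation then delivers $2$-bounded simplicity. I expect the only genuinely delicate point to be the non-commensurability bookkeeping needed for the repeated use of Lemma~\ref{lem:HNN-conj}, particularly through the intermediate stage $G''(k)$, where the direct free-product normal-form argument is no longer available and one must rely on the lemma itself.
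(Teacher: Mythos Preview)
Your approach is correct, but it differs from the paper's proof. The paper does not modify the inductive HNN construction at all; instead it enlarges the data before invoking Lemma~\ref{lem:HNN-emb} once. Concretely, it replaces $G$ by $\bar G = G * F(a_1,\dots,a_{n-1},b_1,\dots,b_{n-1})$ and each $X_i$ by $\bar X_i = X_i \cup \{a_i,a_i^{-1}\} \cup \{[a_j,b_i]\,:\,j\ne i\}$. A single free-product normal-form check shows the $\bar X_i$ are independent in $\bar G$, and then Lemma~\ref{lem:HNN-emb} gives the {\ncc}-group $M$ directly. The $2$-bounded simplicity comes from the observation that in $M$ one has $a_j\sim a_j^{-1}$ (both lie in $\bar X_j$), so $[a_j,b_i]=a_j\cdot(b_i a_j^{-1} b_i^{-1})$ is already a product of two $M$-conjugates of any element in class $j$.

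Your route---interleaving free-product and HNN steps to force $x_j=a_{ij}b_{ij}$ with $a_{ij},b_{ij}$ conjugate to $x_i$---achieves the same effect but at the cost of repeated bookkeeping with Lemma~\ref{lem:HNN-conj} through several intermediate stages. Note that the swap trick you flag for $G'''(k)$ is already needed at the $G''(k)$ stage (the associated subgroup there also involves $x_i$), and you should also record that $b_{ij}\not\approx x_l$ survives into $G''(k)$, since this is a hypothesis for the next application of the lemma. These are routine, and your outline goes through; the paper's argument simply trades all of that for one up-front normal-form computation in a free product and a black-box call to Lemma~\ref{lem:HNN-emb}.
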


\begin{proof} Let a torsion-free countable group $G$ and its non-empty independent subsets $X_i$, $i=1,\dots,n-1$, be as in
Lemma \ref{lem:HNN-emb}. Let $F=F(a_1,\dots,a_{n-1},b_1,\dots,b_{n-1})$ be the free group with the free generating set
$\{a_1,\dots,a_{n-1},b_1,\dots,b_{n-1}\}$, and consider the group $\bar G=G * F$. For each $i=1,\dots,n-1$, define
$$\bar X_i = X_i \cup \{a_i,a_i^{-1}\} \cup \{[a_j,b_i]~|~j=1,\dots,n-1, j\neq i\} \subset \bar G,$$
where $[a_j,b_i]=a_jb_ia_j^{-1}b_i^{-1}$. Using the universal properties of free groups and free products one can easily see
that the subsets $\bar X_i$, $i=1,\dots,n-1$, are independent in $\bar G$.

Now we apply Lemma \ref{lem:HNN-emb} to find a countable torsion-free {\ncc}-group $M$, containing $\bar G$, such that $\bar X_i$,
$i=1,\dots,n-1$, are independent in $M$. Observe that this implies that for any given $i=1,\dots, n-1$,
any two elements of $\bar X_i$ are conjugate in $M$. For arbitrary $x,y \in M\setminus \{1\}$ there exist $i,j \in \{1,\dots,n-1\}$
such that $x \stackrel{M}{\sim} a_i$ and $y \stackrel{M}{\sim} a_j$. If $i=j$ then $x \stackrel{M}{\sim} y$. Otherwise,
$y \stackrel{M}{\sim} a_j \stackrel{M}{\sim} a_j^{-1}$ and $x\stackrel{M}{\sim} [a_j,b_i]$ which is a product of two conjugates of
$a_j$, and, hence, of $y$. Therefore the group $M$ is $2$-boundedly simple, and since $G \le \bar G \le M$, the corollary is proved.
\end{proof}

Below is a particular (torsion-free) case of a theorem proved by Osin in \cite[Thm. 2.6]{Osin-SCT}:

\begin{lemma}\label{lem:fg-emb} Any countable torsion-free group $S$ can be embedded into a $2$-generated group
$M$ so that $S$ is malnormal in $M$ and every element of $M$ is conjugated to an element of $S$ in $M$.
\end{lemma}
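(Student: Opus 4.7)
The plan is to construct $M$ as a direct limit of small-cancellation quotients of the free product $G_0 = S \ast F$, where $F = F(a,b)$ is free of rank~$2$. By Lemma~\ref{lem:hyp_suit_sbgp_free_prod}, $G_0$ is torsion-free, hyperbolic relative to the family $\{S\}$, with $F$ a suitable subgroup.

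Enumerate the countably many non-trivial elements of $G_0$ as $g_1,g_2,\dots$. I would inductively build a sequence of epimorphisms $\eta_k\colon G_{k-1}\twoheadrightarrow G_k$ via the small cancellation machinery of \cite[Thm.~2.6]{Osin-SCT}, which is the needed refinement of Theorem~\ref{thm:main_SCT}: its conclusion forces each element of the prescribed finite set $T$ to be $G_k$-conjugate to an element of the peripheral subgroup $\eta(S)$, rather than merely to lie in the suitable subgroup $\eta(F)$. At step~$k$, I take $T$ containing the image of $g_k$ and $U$ containing the images of $g_1,\dots,g_{k-1}$ together with the first $k$ members of a fixed enumeration of $S$, so that $G_k$ is torsion-free, hyperbolic relative to the image of $\{S\}$ with the image of $F$ still suitable, $\eta_k$ is injective on the image of $S\cup U$, the image of $g_k$ lies in the image of $F$, and the image of $g_k$ is $G_k$-conjugate to an element of the image of $S$.

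Setting $M=G_0\big/\bigcup_{k\ge 1}\ker(G_0\twoheadrightarrow G_k)$, the tower of injectivities embeds $S$ into $M$; every non-trivial element of $M$ is the image of some $g_k$ and hence, after step~$k$, is $M$-conjugate to an element of the image of $S$; and the containment $\eta(g_k)\in\eta(F)$ at every stage forces $M=\eta(F)=\langle\eta(a),\eta(b)\rangle$, giving $2$-generation. Malnormality of $\eta(S)$ in $M$ follows from Lemma~\ref{lem:malnorm}: in each torsion-free $G_k$, the image of $S$ is a proper peripheral subgroup and therefore malnormal, and a routine lift-and-project argument---using the injectivity of $\eta_k$ on the growing set $U$ to promote conjugations in $M$ back to conjugations in some $G_k$---transfers malnormality to the direct limit.

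The main obstacle is precisely the conjugacy clause: Theorem~\ref{thm:main_SCT} alone only yields the membership $\eta(t)\in\eta(F)$ and cannot, on its own, place $\eta(t)$ in a fixed $M$-conjugacy class with $\eta(S)$. The refinement in \cite[Thm.~2.6]{Osin-SCT} achieves the conjugacy clause by selecting the new relators in the form $t\cdot (wsw^{-1})^{-1}$, with $s\in S$ the intended conjugacy target and $w$ a sufficiently long pseudo-random word in the suitable subgroup $F$, so that the induced relation forces $\eta(t)$ to be a conjugate of $\eta(s)$ while the pseudo-randomness of $w$ supplies the $C(1/6)$-type small cancellation conditions needed to preserve relative hyperbolicity, torsion-freeness, and injectivity on the peripheral subgroup.
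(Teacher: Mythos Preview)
Your proposal is correct and follows essentially the same route as the paper, which simply invokes \cite[Thm.~2.6]{Osin-SCT} (the present lemma being its torsion-free case) and adds the observation that malnormality of $\xi_i(S)$ in each $G(i)$, granted by Lemma~\ref{lem:malnorm}, passes to the direct limit. One small remark: the relators $t\cdot(wsw^{-1})^{-1}$ you describe yield the conjugacy clause but do not by themselves put $\eta(g_k)$ into $\eta(F)$; two-generation requires a parallel use of Theorem~\ref{thm:main_SCT} to absorb an enumeration of $S$ into $F$, exactly as in Osin's original argument.
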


\begin{proof} Following Osin's proof of Theorem 2.6 from \cite{Osin-SCT}, we see that the required group $M$ can be
constructed as an inductive
limit of relatively hyperbolic groups $G(i)$, $i \in \N$. More precisely, one sets $G(0)=S*F_2$, where $F_2$ is a free group of rank $2$,
$\xi_0=id_{G(0)}:G(0) \to G(0)$, and
for each $i \in \N$ one constructs a group $G(i)$ and an epimorphism $\xi_i:G(0) \to G(i)$ so that $\xi_i$ is
injective on $S$, $G(i)$ is  torsion-free and hyperbolic relative to
$\{\xi_i(S)\}$, and $\xi_i$ factors through $\xi_{i-1}$.  The group $M$ is defined to be the  direct limit
of $(G(i),\xi_i)$ as $i \to \infty$, i.e., $Q=G(0)/N$ where $N =\bigcup_{i \in \N} \ker(\xi_i)$.
By Lemma \ref{lem:malnorm}, $\xi_i(S)$ is malnormal in $G(i)$, hence the image of $S$ will also be malnormal in $M$.
\end{proof}

\begin{thm} \label{thm:cc-emb} Let $G$ be a torsion-free countable group, $n\in \N$, $n \ge 2$, and non-empty subsets
$X_i \subset G\setminus \{1\}$,  $i =1,\dots,n-1$, be independent in $G$. Then $G$ can be embedded into a 2-generated
torsion-free group $M$ which has
{\ncc}, so that the subsets $X_i, i=1,\dots, n-1$, stay independent in $M$. Moreover, one can choose $M$ to be $2$-boundedly simple.
\end{thm}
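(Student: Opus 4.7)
The plan is to combine the two main tools already at our disposal: Corollary \ref{cor:2-b-s}, which produces a countable (possibly infinitely generated) $2$-boundedly simple {\ncc}-group preserving independence, and Lemma \ref{lem:fg-emb}, which compresses any countable torsion-free group into a $2$-generated one while keeping it malnormal and absorbing every conjugacy class.

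First, apply Corollary \ref{cor:2-b-s} to embed $G$ into a countable torsion-free group $S$ which is $2$-boundedly simple, has {\ncc}, and in which the subsets $X_1,\dots,X_{n-1}$ remain independent. Second, apply Lemma \ref{lem:fg-emb} to embed $S$ into a $2$-generated group $M$ in which $S$ is malnormal and every element of $M$ is conjugate to an element of $S$. Since every element of $M$ is conjugate to an element of the torsion-free group $S$, the group $M$ is torsion-free, and clearly $G \le S \le M$ with $M$ $2$-generated.

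The key use of malnormality is the following observation. If $s_1,s_2\in S\setminus\{1\}$ and $g\in M$ satisfy $gs_1g^{-1}=s_2$, then $s_1 \in g^{-1}Sg\cap S$; malnormality of $S$ forces $g\in S$. Applying this with arbitrary powers gives: two nontrivial elements of $S$ are conjugate (respectively, commensurable) in $M$ if and only if they are conjugate (respectively, commensurable) in $S$. This immediately yields two things. First, since $S$ has exactly $n$ conjugacy classes and every element of $M$ is conjugate in $M$ to some element of $S$, the group $M$ also has exactly $n$ conjugacy classes. Second, if some $x\in X_i$ and $y\in X_j$ with $i\ne j$ were commensurable in $M$, then for suitable $k,l\in\Z\setminus\{0\}$ the elements $x^k,y^l\in S\setminus\{1\}$ would be conjugate in $M$, hence conjugate in $S$, contradicting independence of $X_i$ and $X_j$ in $S$.

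It remains to verify that $M$ inherits $2$-bounded simplicity from $S$. Given $x,y\in M\setminus\{1\}$, pick $h_1,h_2\in M$ and $s_x,s_y\in S\setminus\{1\}$ with $x=h_1s_xh_1^{-1}$ and $y=h_2s_yh_2^{-1}$. By $2$-bounded simplicity of $S$ there exist $l\in\{1,2\}$ and $f_1,\dots,f_l\in S$ with $s_x=\prod_{i=1}^{l}f_is_yf_i^{-1}$. Conjugating and substituting $s_y=h_2^{-1}yh_2$ gives $x=\prod_{i=1}^{l}g_iyg_i^{-1}$ with $g_i=h_1f_ih_2^{-1}\in M$, so $M$ is $2$-boundedly simple. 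The main conceptual point of the argument — and essentially the only nonroutine step — is to recognize that malnormality is the exact property needed both to stop existing conjugacy classes from collapsing when passing from $S$ to $M$ and to prevent the independent families $X_i$ from merging; once that is in hand, each listed property of $M$ follows by a direct transfer from $S$.
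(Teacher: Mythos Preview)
Your proof is correct and follows essentially the same route as the paper's own proof: apply Corollary~\ref{cor:2-b-s} to get a countable torsion-free $2$-boundedly simple {\ncc}-group $S$ preserving independence, then apply Lemma~\ref{lem:fg-emb} to pass to a $2$-generated $M$ with $S$ malnormal and every element of $M$ conjugate into $S$, and use malnormality to transfer all the required properties from $S$ to $M$. Your write-up is in fact more detailed than the paper's, which leaves the verification of torsion-freeness, {\ncc}, and $2$-bounded simplicity of $M$ to the reader.
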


\begin{proof} First, according to Corollary \ref{cor:2-b-s},
we can embed the group $G$ into a countable torsion-free group $S$ such that $S$ has {\ncc} and is $2$-boundedly simple,
and $X_i$, $i=1,\dots,n-1$, are independent in $S$. Second, we apply Lemma \ref{lem:fg-emb} to find the $2$-generated group
$M$ from its claim.
Choose any $i,j \in \{1,\dots,n-1\}$, $i \neq j$, and $x \in X_i$, $y \in X_j$. If $x$ and $y$ were commensurable in $M$, the malnormality
of $S$ would imply that $x$ and $y$ must be commensurable in $S$, contradicting the construction. Hence $X_i$, $i=1,\dots,n-1$,
are independent in $M$. Since each element of $M$ is conjugated to an element of $S$, it is evident that $M$ has {\ncc}, is torsion-free
and $2$-boundedly simple.
\end{proof}

\begin{remark} A more direct proof of Theorem \ref{thm:cc-emb}, not using Lemma \ref{lem:fg-emb}, can be extracted from the proof of Theorem
\ref{thm:ext-main} (see Section \ref{sec:add_fg}), applied to the case when $H=M$.
\end{remark}


It is easy to see that Theorem \ref{thm:cc-emb} immediately implies Corollary \ref{cor:non-comm_sep_emb} that was formulated in the
Introduction. As promised, we now give a counterexample to Question \ref{q:2} (formulated in the Introduction) for any $n\ge 3$.

\begin{example} Let $G_2=\langle a,t~\|~tat^{-1}=a^2 \rangle$ be the Baumslag-Solitar $BS(1,2)$-group. Then $G_2$ is torsion-free, and
the elements $t^2,t^4,\dots, t^{2^{n-1}}$ are pairwise non-conjugate in $G_2$
(since this holds in the quotient of $G_2$ by the normal closure of $a$). Suppose that $G_2$ is embedded into a group
$M$ having {\ncc} so that $t^2,t^4,\dots,t^{2^{n-1}}$ are pairwise non-conjugate in $M$. Then $t^2,\dots, t^{2^{n-1}}$
is the list of representatives of
all non-trivial conjugacy classes of $M$. Therefore there exist $k,l \in \{1,\dots,n-1\}$ such that $t \stackrel{M}{\sim} t^{2^k}$ and
$a \stackrel{M}{\sim} t^{2^l}$. Consequently
$$t^2 \stackrel{M}{\sim} t^{2^{k+1}} ~\mbox{ and }~ t^{2^l} \stackrel{M}{\sim} a \stackrel{M}{\sim} a^2 \stackrel{M}{\sim} t^{2^{l+1}},$$
hence $k=l=n-1$ according to the assumptions. But this yields
$$t \stackrel{M}{\sim} t^{2^{n-1}} \stackrel{M}{\sim} a \stackrel{M}{\sim} a^2 \stackrel{M}{\sim} t^2,$$
implying that $t^2 \stackrel{M}{\sim} t^4$, which contradicts our assumptions.

Thus $G_2$ can not be embedded into a {\ncc}-group $M$ in such a way that $t^2,\dots, t^{2^{n-1}}$ remain pairwise non-conjugate in $M$.
\end{example}


\section{Normal subgroups with {\ncc}}\label{sec:norm_sbgps}
If $M$ is a normal subgroup of a group $H$, then $H$ naturally acts on $M$ by conjugation. We shall say that this action
preserves the conjugacy classes of $M$ if for any $h \in H$ and $a \in M$ there exists $b \in M$ such that $hah^{-1}=bab^{-1}$.

\begin{lemma} \label{lem:indep_part} Let $G$ be a torsion-free group,
$N \lhd G$ and $x_1,\dots,x_l \in N \setminus \{1\}$ be pairwise non-commensurable (in $G$)
elements. Then there exists a partition $N\setminus \{1\}=\bigsqcup_{k=1}^l X_k$ of $N\setminus \{1\}$
into a (disjoint) union of $G$-independent subsets $X_1,\dots,X_l$ such that $x_k \in X_k$ for every $k\in \{1,\dots,l\}$. Moreover,
each subset $X_k$ will be invariant under conjugation by elements of $G$.
\end{lemma}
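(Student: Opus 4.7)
The plan is to partition $N \setminus \{1\}$ along the equivalence classes of the commensurability relation $\stackrel{G}{\approx}$. First I would check that commensurability is an equivalence relation on the non-trivial elements of a torsion-free group: reflexivity and symmetry are obvious from the definition, while transitivity follows from a standard power-raising argument -- if $x^k \stackrel{G}{\sim} y^l$ and $y^m \stackrel{G}{\sim} z^n$, then conjugating the first equation by a suitable further power and combining with the second gives $x^{km} \stackrel{G}{\sim} z^{ln}$, with $km, ln \ne 0$ thanks to torsion-freeness. This transitivity check is the only place the torsion-free hypothesis is needed.

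Next, I would record two features of each commensurability class $C \subseteq N \setminus \{1\}$. First, $C$ is invariant under conjugation by $G$: any conjugate $gag^{-1}$ of an element $a \in C$ lies in $N$ by normality of $N$, and is commensurable with $a$ since it is conjugate to $a$. Second, any two distinct classes contain no commensurable pair of elements, which is just the statement that they are equivalence classes. Hence the decomposition $N \setminus \{1\} = \bigsqcup_{\alpha} C_\alpha$ automatically consists of $G$-invariant pieces that are pairwise independent in the sense defined above.

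Since $x_1, \ldots, x_l$ are pairwise non-commensurable, each lies in its own class; after re-indexing I may write $x_k \in C_k$ for $k = 1, \ldots, l$. To collapse the decomposition down to exactly $l$ pieces I would absorb the remaining classes into $X_l$: set $X_k := C_k$ for $k = 1, \ldots, l-1$ and $X_l := (N \setminus \{1\}) \setminus \bigsqcup_{k=1}^{l-1} C_k$. Then $X_l \supseteq C_l \ni x_l$, the $X_k$ partition $N \setminus \{1\}$, and each is $G$-invariant as a union of $G$-invariant classes. Independence of the family $\{X_k\}$ is immediate: if $a \in X_i$ and $b \in X_j$ with $i \ne j$ were commensurable they would lie in a common class $C_\alpha$, contradicting either the distinctness of $C_i$ and $C_j$ (when $i, j < l$) or the disjointness of $X_l$ from $C_i$ (when $j = l$). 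No substantive obstacle arises; the only subtle point is the transitivity verification, and that is routine once the torsion-free hypothesis is invoked.
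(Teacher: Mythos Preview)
Your argument is correct and essentially identical to the paper's: both partition $N\setminus\{1\}$ by the equivalence classes of $\stackrel{G}{\approx}$, keep the classes of $x_1,\dots,x_{l-1}$ as $X_1,\dots,X_{l-1}$, and dump all remaining classes into $X_l$. One small inaccuracy: transitivity of commensurability does not actually require torsion-freeness (the exponents $km$, $ln$ are nonzero simply because $k,l,m,n\in\Z\setminus\{0\}$), though this does not affect the proof.
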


\begin{proof} Since $\stackrel{G}{\approx}$ is an equivalence relation on $G\setminus \{1\}$, one can find the corresponding decomposition:
$G\setminus \{1\}=\bigsqcup_{j \in J} Y_j$, where $Y_j$ is an equivalence class for each $j \in J$. For each $k=1,\dots,l$, there
exists $j(k) \in J$ such that $x_k \in Y_{j(k)}$. Note that $j(k) \neq j(m)$ if $k \neq m$ since $x_k \stackrel{G}{\not\approx} x_m$.

Denote $J'=J\setminus \{j(1),\dots,j(l-1)\}$,
$$X_1=Y_{j(1)} \cap N, \dots, X_{l-1}=Y_{j(l-1)} \cap N,~\mbox{ and }~X_l=\bigcup_{j \in J'} Y_j \cap N.$$
Evidently $N\setminus \{1\}=\bigsqcup_{k=1}^l X_k$, $X_1,\dots, X_l$ are independent subsets of $G$ and $x_k \in X_k$ for each $k=1,\dots,l$.
The final property follows from the construction since for any $a\in G$ and $j \in J$ we have $aY_ja^{-1}=Y_j$ and $aNa^{-1}=N$.
\end{proof}

\begin{lemma} \label{lem:ncc-normal} For every countable group $C$ and each $n \in \N$, $n \ge 2$,
there exists a countable torsion-free group $H$ having a normal
subgroup $M \lhd H$ such that
\begin{itemize}
\item[(i)] $M$ satisfies {\ncc};
\item[(ii)] $M$ is $2$-boundedly simple;
\item[(iii)] the natural action of $H$ on $M$ preserves the conjugacy classes of $M$;
\item[(iv)] $H/M \cong C$.
\end{itemize}
\end{lemma}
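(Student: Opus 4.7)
The plan is to combine the iterative HNN-extension technique of Lemma~\ref{lem:HNN-emb} with the commutator trick from Corollary~\ref{cor:2-b-s}, carried out \emph{equivariantly} with respect to a fixed surjection onto $C$. The key idea is to arrange every stable letter introduced by an HNN-extension to land inside the growing normal subgroup, so that the quotient by that subgroup remains $C$ throughout the construction.

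First I would build the initial data. Choose a countable torsion-free group $\tilde C$ equipped with an epimorphism $\pi_{\tilde C}\colon \tilde C \twoheadrightarrow C$ (for instance, the free group on the underlying set of $C$). Let $F=F(a_1,\dots,a_{n-1},b_1,\dots,b_{n-1})$ be free of rank $2(n-1)$, set $G_0=\tilde C * F$, and let $\pi_0\colon G_0 \to C$ extend $\pi_{\tilde C}$ and kill $F$. Putting $M_0=\ker \pi_0$, the group $G_0$ is countable and torsion-free with $G_0/M_0\cong C$ and $F\subset M_0$. For each $i$ define
$$\bar X_i=\{a_i,a_i^{-1}\}\cup \{[a_k,b_i]:k\neq i\}\subset M_0;$$
a routine normal-form argument in the free product $\tilde C * F$, together with the description of conjugacy in a free group, shows that $\bar X_1,\dots,\bar X_{n-1}$ are pairwise independent in $G_0$.

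Next I would inductively construct a chain $G_0\le G_1\le G_2\le\cdots$ of countable torsion-free groups with compatible surjections $\pi_s\colon G_s \twoheadrightarrow C$ and $M_s=\ker \pi_s$, enumerating (by dovetailing) all non-trivial elements of $M=\bigcup_s M_s$. At stage $s+1$, given the next element $g\in M_s\setminus\{1\}$ in the enumeration, choose $j\in\{1,\dots,n-1\}$ so that $g$ is $G_s$-commensurable with some element of $\bar X_j$, taking $j=1$ when no such index exists, and form the HNN-extension
$$G_{s+1}=\langle G_s,\, t_{s+1}~\|~ t_{s+1} g t_{s+1}^{-1}=a_j \rangle.$$
Since $g,a_j\in M_s$ both map to $1\in C$, the map $\pi_s$ extends to $\pi_{s+1}$ by declaring $\pi_{s+1}(t_{s+1})=1$, which places the new stable letter inside $M_{s+1}$. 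Torsion-freeness persists because the associated subgroups are infinite cyclic. Applying Lemma~\ref{lem:HNN-conj} with $(x_1,x_2,x_3,x_4)=(y,y',g,a_j)$ for $y\in\bar X_i$, $y'\in\bar X_{i'}$, $i\neq i'$, shows that $\bar X_1,\dots,\bar X_{n-1}$ remain independent in $G_{s+1}$.

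Finally, setting $H=\bigcup_s G_s$ and $M=\bigcup_s M_s=\ker(\pi\colon H\twoheadrightarrow C)$ yields a countable torsion-free group with $M\lhd H$ and $H/M\cong C$, giving~(iv). Every non-trivial element of $M$ is $M$-conjugate to some $a_j$, while $a_1,\dots,a_{n-1}$ remain pairwise non-commensurable in $H$, proving~(i). For~(ii), if $x\sim_M a_i$ and $y\sim_M a_j$ with $i\neq j$, then $x$ is $M$-conjugate to $[a_j,b_i]=a_j\cdot(b_i a_j^{-1} b_i^{-1})$, a product of two $M$-conjugates of $y$ (using $a_j\sim_M a_j^{-1}$); the case $i=j$ is trivial. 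For~(iii), if $h\in H$ and $a\sim_M a_k$, then $hah^{-1}\in M$ lies in some $M$-conjugacy class $[a_\ell]$, and $a_k\sim_H h a_k h^{-1}\sim_H a_\ell$ forces $k=\ell$ by non-commensurability of the $a_i$ in $H$. I expect the main technical obstacle to be the simultaneous bookkeeping: keeping every $t_{s+1}$ inside $M_{s+1}$ while preserving both the independence of the $\bar X_i$ and torsion-freeness, which is handled by the initial free product setup together with Lemma~\ref{lem:HNN-conj}.
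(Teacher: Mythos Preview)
Your argument is correct, but it follows a genuinely different route from the paper's proof. The paper builds $H$ as a tower of \emph{amalgamated products}: starting from $H_0=H_0'*F(x_1,\dots,x_{n-1})$ with $H_0'/N_0'\cong C$, it uses Lemma~\ref{lem:indep_part} to partition $N_i\setminus\{1\}$ into $H_i$-independent conjugation-invariant pieces, applies Corollary~\ref{cor:2-b-s} as a black box to embed $N_i$ into a ready-made torsion-free $2$-boundedly simple {\ncc}-group $M_{i+1}$, and then sets $H_{i+1}=H_i*_{N_i}M_{i+1}$; the delicate step is checking, via the normal form in an amalgam, that $x_1,\dots,x_{n-1}$ remain non-commensurable in $H_{i+1}$. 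Your approach instead unwinds the proofs of Lemma~\ref{lem:HNN-emb} and Corollary~\ref{cor:2-b-s} \emph{inside} the construction, doing everything with HNN-extensions over a single tower $G_0\le G_1\le\cdots$ and forcing each stable letter into the kernel of the running surjection onto~$C$. This is more hands-on but arguably cleaner: it avoids Lemma~\ref{lem:indep_part} and the amalgamated-product bookkeeping entirely, and the verification of independence reduces directly to Lemma~\ref{lem:HNN-conj}. The paper's version, by contrast, is more modular and makes the role of the auxiliary {\ncc}-groups $M_i$ explicit.

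One small point worth tightening: when you invoke Lemma~\ref{lem:HNN-conj} with $(x_1,x_2,x_3,x_4)=(y,y',g,a_j)$, the hypothesis $x_1\stackrel{G_s}{\not\approx}x_4$ can fail if $y\in\bar X_j$ happens to lie in the same set as $a_j$. This is harmless---since $i\neq i'$ at least one of $y,y'$ lies outside $\bar X_j$, and you simply take that one as $x_1$---but the case split (exactly as in the proof of Lemma~\ref{lem:HNN-emb}) should be mentioned. Likewise, your claims $a_j^{-1}\sim_M a_j$ and $[a_k,b_i]\sim_M a_i$ are justified because these elements already belong to $\bar X_j$ and $\bar X_i$ respectively, so at the stage where they are processed the rule ``choose $j$ with $g$ commensurable to an element of $\bar X_j$'' picks the correct index; this is implicit in what you wrote but could be stated explicitly.
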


\begin{proof} Let $H_0'$ be the free group of infinite countable rank. Choose $N_0' \lhd H_0'$ so that $H_0'/N_0' \cong C$.
Let $F=F(x_1,\dots,x_{n-1})$ denote the free group freely generated by $x_1,\dots, x_{n-1}$. Define
$H_0=H_0' * F$ and let $N_0$ be the normal closure of $N_0' \cup F$ in $H_0$. Evidently,
$H_0/N_0 \cong H_0'/N_0' \cong C$ and
the elements $x_{1},\dots,x_{n-1} \in N_0 \setminus \{1\}$ are pairwise non-commensurable in $H_0$.


By Lemma \ref{lem:indep_part}, one can choose a partition of $N_0\setminus \{1\}$ into the union of $H_0$-independent subsets:
$$N_0 \setminus \{1\}=\bigsqcup_{k=1}^{n-1} X_{0k},$$ so that
$x_{k} \in X_{0k}$ for each $k=1,\dots,n-1$.

By Corollary \ref{cor:2-b-s} there exists a countable torsion-free $2$-boundedly simple group $M_1$ with the
property {\ncc} containing a copy of $N_0$, such that the subsets $X_{0k}$, $k=1,2,\dots,n-1$, are independent in $M_1$.
Denote by $H_1 = H_0*_{N_0} M_1$ the amalgamated product of $H_0$ and $M_1$
along $N_0$, and let $N_1$ be the normal closure of $M_1$ in $H_1$. Note that $H_1$ is torsion-free as an amalgamated product of two
torsion-free groups (\cite[IV.2.7]{L-S}).

We need to verify that the elements $x_{1},\dots,x_{n-1}$ are pairwise non-commen\-surable in $H_1$. Indeed, if $a \in X_{0k}$ and
$b \in X_{0l}$, $k \neq l$,
are conjugate in $H_1$ then there must exist $y_1,\dots,y_t \in M_1 \setminus N_0$ and $z_1,\dots,z_{t-1} \in H_0 \setminus N_0$,
$z_0,z_t \in H_0$ such that
$$z_0y_1\cdots z_{t-1}y_t z_t a z_t^{-1} y_t^{-1} z_{t-1}^{-1} \cdots y_1^{-1}z_0^{-1}\stackrel{H_1}{=}b.$$
Suppose that $t$ is minimal possible with this property.
As conjugation by elements of $H_0$ preserves $X_{0k}$ and $X_{0l}$, we can assume that $z_0,z_t =1$. Hence
$$y_1z_1\cdots z_{t-1}y_t a y_t^{-1} z_{t-1}^{-1} \cdots z_1^{-1}y_1^{-1}b^{-1}\stackrel{H_1}{=}1.$$
By the properties of amalgamated products (see \cite[Ch. IV]{L-S}), the left-hand side in this equality can not be reduced,
consequently $y_t a y_t^{-1} \in N_0 \setminus \{1\}=\bigsqcup_{k=1}^{n-1} X_{0k}$. But then
$y_t a y_t^{-1} \in X_{0k}$ by the properties of $M_1$, contradicting the minimality of $t$. Thus, we have shown that
$x_{k} \stackrel{H_1}{\not\approx} x_{l}$ whenever $k \neq l$.

Assume that the group $H_i=H_{i-1}*_{N_{i-1}} M_i$, $i \ge 1$, has already been constructed, so that
\begin{itemize}
\item[0)] $H_i$ is countable and torsion-free;
\item[1)] $N_{i-1} \lhd H_{i-1}$;
\item[2)] $H_{i-1}=H_0 \cdot N_{i-1}$ and $H_0 \cap N_{i-1}=N_0$;
\item[3)] $M_i$ satisfies {\ncc};
\item[4)] $x_1,\dots,x_{n-1}$ are pairwise non-commensurable in $H_i$.
\end{itemize}

Let $N_i$ be the normal closure of $M_i$ in $H_i$. Because of the condition 4) and Lemma \ref{lem:indep_part},
one can find a partition of $N_i \setminus \{1\}$ into a union of $H_i$-independent subsets:
$$N_i \setminus \{1\}=\bigsqcup_{k=1}^{n-1} X_{ik},$$ so that
$x_{k} \in X_{ik}$ for each $k=1,\dots,n-1$. By Lemma \ref{lem:HNN-emb}
there is a countable group a $M_{i+1}$, with {\ncc}, containing a copy of $N_i$, in which the subsets
$X_{ik}$, $i=1,\dots,n-1$, remain independent. Set $H_{i+1}=H_i *_{N_i} M_{i+1}$. Now, it is easy to verify that
the analogs of the conditions 0)-3) hold for $H_{i+1}$ and
\begin{equation}\label{eq:N_i} N_{i-1}\le M_i \le N_i \le M_{i+1}.
\end{equation}
The analog of the condition 4) is true in $H_{i+1}$ by the same considerations as before (in the case of $H_1$).

Define the group $H=\bigcup_{i=1}^\infty H_i$ and its subgroup $M=\bigcup_{i=1}^\infty N_i$. Observe that the condition $0)$ implies that
$H$ is torsion-free, condition 1)
implies that $M$ is normal in $H$, and 2) implies that $H=H_0 \cdot M$ and $H_0 \cap M=N_0$. Hence $H/M\cong H_0/(H_0 \cap M) \cong C$.
Applying \eqref{eq:N_i} we get $M=\bigcup_{i=1}^\infty M_i$, and thus, by the conditions 3), 4) it enjoys the property
{\ncc}: each element of $M$ will be a conjugate of $x_k$ for some $k \in \{1,\dots,n-1\}$. Since $x_1,\dots,x_{n-1} \in M_1\le M$ and
$M_1$ is 2-boundedly simple, then so will be $M$.
 Finally, 4) implies that
$x_k \stackrel{H}{\nsim} x_l$ whenever $k \neq l$, and, consequently, the natural action of $H$ on $M$ preserves its conjugacy classes. Q.e.d.
\end{proof}

\begin{lemma} \label{lem:HNN-univer} Suppose that $G$ is a group, $N \lhd G$, $A,B \le G$ and $\varphi:A \to B$ is an isomorphism
such that  $\varphi(a) \in aN$ (i.e., the canonical images of $a$ and $\varphi(a)$ in $G/N$ coincide) for each $a \in A$.
Let $L=\langle G, t ~\|~ tat^{-1}=\varphi(a),~\forall~a \in A \rangle $ be the
HNN-extension of $G$ with associated subgroups $A$ and $B$, and let $K$ be the normal closure of $\langle N,t \rangle$ in $L$ .
Then $G \cap K=N$.
\end{lemma}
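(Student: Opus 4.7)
The inclusion $N \subseteq G \cap K$ is immediate because $N$ is a subset of the generating set of $K$ (and $N \subseteq G$). The content of the lemma lies in the opposite inclusion.

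The plan is to construct an explicit homomorphism $\psi\colon L \to G/N$ whose kernel contains $K$ but whose restriction to $G$ has kernel exactly $N$. Consider the natural epimorphism $\pi\colon G \to G/N$. Using the universal property of the HNN-extension, I would define $\psi$ by sending $g \in G$ to $\pi(g)$ and $t$ to the trivial element of $G/N$. To see that this assignment extends to a well-defined homomorphism on $L$, one must check that the defining HNN-relations are respected, i.e.\ that $\psi(t a t^{-1}) = \psi(\varphi(a))$ for every $a \in A$. The left-hand side equals $\pi(a)$, while the right-hand side equals $\pi(\varphi(a))$; and by the hypothesis $\varphi(a) \in aN$ we have $\pi(\varphi(a)) = \pi(a)$, so the relation holds.

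Now $\ker(\psi)$ is a normal subgroup of $L$ which contains $N$ (since $\pi$ kills $N$) and contains $t$ (by construction). Hence $\ker(\psi)$ contains the normal closure of $\langle N, t \rangle$ in $L$, which is precisely $K$. Consequently
\[
G \cap K \;\subseteq\; G \cap \ker(\psi) \;=\; \ker\bigl(\psi|_{G}\bigr) \;=\; \ker(\pi) \;=\; N,
\]
which combined with the trivial inclusion yields $G \cap K = N$.

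The only potential subtlety is verifying that $\psi$ is well-defined on the HNN-extension, but this is exactly where the hypothesis $\varphi(a) \in aN$ is used, so no serious obstacle arises. In particular, one does not need Britton's Lemma or any normal-form argument, since the universal property of the HNN-extension handles the construction of $\psi$ automatically.
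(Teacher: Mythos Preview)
Your proof is correct and follows exactly the approach the paper indicates: the paper's own ``proof'' of this lemma consists solely of the sentence ``This statement easily follows from the universal property of HNN-extensions and is left as an exercise for the reader,'' and your argument is precisely that exercise carried out in full.
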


\begin{proof} This statement easily follows from the universal property of HNN-extensions and is left as an exercise for the reader.
\end{proof}

The next lemma will allow us to construct {\ncc}-groups that are not simple:

\begin{lemma} \label{lem:emb-non-simple} Assume that $H$ is a torsion-free countable group and $M \lhd H$ is a non-trivial normal subgroup.
Then $H$ can be isomorphically embedded into a countable torsion-free group $G$ possessing a normal subgroup $K \lhd G$ such that
\begin{itemize}
\item[1)] $G=HK$ and $H \cap K=M$;
\item[2)] $\forall~x,y \in G\setminus\{1\}$, $\varphi(x)=\varphi(y)$ if and only if $\exists~h \in K$ such that $x=hyh^{-1}$,
where $\varphi:G \to G/K$ is the natural homomorphism; in particular, $K$ will have {\cc};
\item[3)] $\forall~x,y \in G\setminus\{1\}$, $x \stackrel{G}{\sim} y$ if and only if $\varphi(x) \stackrel{G/K}{\sim}\varphi(y)$;
\end{itemize}

\begin{proof}
Choose a set of representatives $Z \subset H$ of cosets of $H$ modulo $M$, in such a way that each coset is represented by a unique element
from $Z$ and $1 \notin Z$.

Define $G(0)$=$H$ and $K(0)=M$. Enumerate the elements of $G(0)\setminus \{1\}$:
$g_1,g_2,\dots$. First we embed the group $G(0)$ into a countable torsion-free group $G_1$,
having a normal subgroup $K_1 \lhd G_1$, such that $G_1=HK_1$, $H \cap K_1=M$ and for every $i \ge 0$ there are $t_i \in K_1$ and
$z_i \in Z$ satisfying $t_{i}g_it_i^{-1} = z_i$.

Suppose that the (countable torsion-free) group $G(j)$, $j \ge 0$, and $K(j) \lhd G(j)$, have already been constructed so that $H \le G(j)$,
$G(j)=HK(j)$, $H \cap K(j)=M$ and, if $j \ge 1$, then $t_{j}g_jt_j^{-1} = z_j$ for some $t_j \in K(j)$ and $z_j \in Z$.
The group $G(j+1)$, containing $G(j)$,
is defined as the following HNN-extension:
$$G(j+1)=\langle G(j),t_{j+1}~\|~t_{j+1}g_{j+1}t_{j+1}^{-1}=z_{j+1}\rangle,$$ where
$z_{j+1} \in Z \subset H$ is the unique representative satisfying $g_{j+1} \in z_{j+1}K(j)$ in $G(j)$. Denote by $K(j+1)\lhd G(j+1)$  the normal
closure of $\langle K(j),t_{j+1} \rangle$ in $G(j+1)$. Evidently the group $G(j+1)$ is countable and torsion-free,
$H \le G(j) \le G(j+1)$, $G(j+1)=HK(j+1)$ and $H \cap K(j+1)=H\cap K(j)=M$ by Lemma \ref{lem:HNN-univer}.

Now, it is easy to verify that the group $G_1=\bigcup_{j=0}^\infty G(j)$ and its normal subgroup $K_1=\bigcup_{j=0}^\infty K(j)$ enjoy
the required properties.

In the same way we can embed $G_1$ into a countable torsion-free group $G_2$,
that has a normal subgroup $K_2 \lhd G_2$, so that $G_2=HK_2$, $H \cap K_2=M$ and
each element of $G_1\setminus \{1\}$ is conjugated in $G_2$ to a corresponding element of $Z$. Performing such a procedure infinitely many times
we achieve the group $G=\bigcup_{i=1}^\infty G_i$ and a normal subgroup $K=\bigcup_{i=1}^\infty K_i \lhd G$ that satisfy the
claims 1) and 2) of the lemma. It is easy to see that the claim 2) implies 3), thus the proof is finished.
\end{proof}

\end{lemma}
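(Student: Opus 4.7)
The plan is a Higman--Neumann--Neumann style iterated HNN construction, performed relative to the coset structure of $M$ in $H$, which forces every non-trivial element to become conjugate via an element of $K$ to a chosen coset representative. First, I fix a transversal $Z \subset H$ for the cosets of $M$ in $H$ with $1 \notin Z$; this is possible exactly because $M \neq \{1\}$, which allows choosing some non-trivial element of $M$ to represent the identity coset. Set $G(0) = H$, $K(0) = M$, and enumerate $G(0) \setminus \{1\}$ as $g_1, g_2, \ldots$. Inductively, given a countable torsion-free $G(j) \supseteq H$ with a normal subgroup $K(j) \lhd G(j)$ satisfying $G(j) = H \cdot K(j)$ and $H \cap K(j) = M$, I let $z_{j+1} \in Z$ be the unique representative with $g_{j+1} \in z_{j+1} K(j)$, form the HNN-extension
$$G(j+1) = \langle G(j),\, t_{j+1} ~\|~ t_{j+1} g_{j+1} t_{j+1}^{-1} = z_{j+1} \rangle,$$
and define $K(j+1)$ as the normal closure of $K(j) \cup \{t_{j+1}\}$ in $G(j+1)$.

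The heart of the inductive step is to check that $G(j+1) = H \cdot K(j+1)$ and $H \cap K(j+1) = M$. Torsion-freeness is inherited by HNN-extensions since $G(j)$ is torsion-free and both associated subgroups are infinite cyclic. The first identity is clear from $t_{j+1} \in K(j+1)$ and $G(j) = H \cdot K(j)$. For the second, I invoke Lemma~\ref{lem:HNN-univer} with $N = K(j)$, $A = \langle g_{j+1} \rangle$, $B = \langle z_{j+1} \rangle$ and the isomorphism $g_{j+1}^n \mapsto z_{j+1}^n$: the required hypothesis $\varphi(a) \in aN$ holds because $z_{j+1}$ and $g_{j+1}$ belong to the same $K(j)$-coset, and normality of $K(j)$ propagates this to all powers. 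The lemma yields $G(j) \cap K(j+1) = K(j)$, hence $H \cap K(j+1) = H \cap K(j) = M$.

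Passing to the direct limit $G_1 = \bigcup_j G(j)$, $K_1 = \bigcup_j K(j)$ gives a countable torsion-free extension of $H$, still satisfying $G_1 = H \cdot K_1$ and $H \cap K_1 = M$, in which every element of $G(0) \setminus \{1\}$ is $K_1$-conjugate to an element of $Z$. Iterating the same construction with $G_1$ in place of $G(0)$ to produce $(G_2, K_2)$, and so on, and then setting $G = \bigcup_i G_i$, $K = \bigcup_i K_i$, yields the required group and normal subgroup, now with the stronger property that every $x \in G \setminus \{1\}$ is $K$-conjugate to a unique $z \in Z$. Claim (1) is built in. Claim (2): if $\varphi(x) = \varphi(y)$ then $x$ and $y$ lie in the same $K$-coset, hence are $K$-conjugate to the same $z \in Z$, and therefore to one another; the converse is trivial, and the special case $\varphi(x) = \varphi(y) = 1$ gives (2CC) for $K$. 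Claim (3) follows from (2): if $\varphi(x) \stackrel{G/K}{\sim} \varphi(y)$, choose $g \in G$ with $\varphi(g x g^{-1}) = \varphi(y)$ and apply (2) to $g x g^{-1}$ and $y$.

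The main obstacle is preserving the identity $H \cap K(j) = M$ throughout the iteration. Lemma~\ref{lem:HNN-univer} is the exact tool for this, but its applicability forces me to pick $z_{j+1}$ in the correct coset of $K(j)$ in $G(j)$, not merely in the correct coset of $M$ in $H$. The reason these two choices coincide is the inductive identification $G(j)/K(j) \cong H/M$ (itself equivalent to the maintained identities), so the construction closes on itself at each step.
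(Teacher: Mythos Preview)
Your proof is correct and follows essentially the same approach as the paper's own proof: the same choice of transversal $Z$, the same inductive tower of HNN-extensions $G(j+1)=\langle G(j),t_{j+1}\mid t_{j+1}g_{j+1}t_{j+1}^{-1}=z_{j+1}\rangle$, the same appeal to Lemma~\ref{lem:HNN-univer} to maintain $H\cap K(j)=M$, and the same double limit to obtain $(G,K)$. Your write-up in fact spells out several details the paper leaves implicit, such as the verification of torsion-freeness, the explicit deduction of claims (2) and (3), and the observation that $G(j)/K(j)\cong H/M$ is what makes the choice of $z_{j+1}$ consistent.
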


\section{Adding finite generation}\label{sec:add_fg}

\begin{thm} \label{thm:ext-main} Assume that $H$ is a countable torsion-free group and $M$ is a non-trivial normal subgroup of $H$.
Let $F$ be an arbitrary non-elementary torsion-free word hyperbolic group. Then there exist a countable torsion-free group $Q$,
containing $H$, and a normal subgroup $N \lhd Q$ with the following properties:
\begin{itemize}
\item[1.] $H$ is malnormal in $Q$;
\item[2.] $Q=H \cdot N$ and $N \cap H =M$;
\item[3.] $N$ is a quotient of $F$;
\item[4.] the centralizer $C_Q(N)$ of $N$ in $Q$ is trivial;
\item[5.] for every $q \in Q$ there is $z \in H$ such that $q \stackrel{Q}{\sim} z$.
\end{itemize}
\end{thm}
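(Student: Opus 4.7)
The plan is to construct $Q$ as a direct limit of torsion-free relatively hyperbolic groups, starting from the free product $G_0 := H \ast F$ and iteratively applying Theorem~\ref{thm:main_SCT}. By Remark~\ref{rem:free_prod_rel_hyp}(a) together with Lemma~\ref{lem:exhyp} (absorbing the word hyperbolic factor $F$), $G_0$ is hyperbolic relative to the single peripheral subgroup $H$; it is torsion-free, $F$ is suitable in $G_0$ by Lemma~\ref{lem:hyp_suit_sbgp_free_prod}, and $H$ is malnormal in $G_0$ by Lemma~\ref{lem:malnorm}. Let $\pi \colon G_0 \to H/M$ be the canonical projection (the quotient map on $H$, trivial on $F$), and set $K_0 := \ker \pi$; so $K_0$ is the normal closure of $F \cup M$ in $G_0$.

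Inductively, I would construct epimorphisms $\eta_i \colon G_0 \twoheadrightarrow G_i$ by successive applications of Theorem~\ref{thm:main_SCT} with peripheral collection $\{\eta_{i-1}(H)\}$ and suitable subgroup $\eta_{i-1}(F)$. All input sets $T$ would be chosen inside $\eta_{i-1}(K_0)$, so that every relation imposed lies in $K_0$ and the projection $\pi$ descends to $Q$; and an increasing sequence of finite sets $U_1 \subseteq U_2 \subseteq \dots$ would be used to preserve non-triviality of a countable family of specified elements. A diagonal enumeration over countably many goals would address:
\begin{itemize}
\item[(a)] For each $w \in K_0$, taking $T = \{w\}$ and applying part (ii) forces $\eta(w) \in \eta(F)$. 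Cycling through all of $K_0$ guarantees $\eta(K_0) = \eta(F)$ in the limit, so $N := \eta(F)$ is automatically a normal subgroup of $Q$ equal to $\ker(\bar\pi \colon Q \to H/M)$.
\item[(b)] For each $\tilde q \in G_0$, choose $h \in H$ with $\pi(\tilde q) = \pi(h)$, so that $\tilde q h^{-1} \in K_0$; arrange, via a carefully designed choice of $T$, that $\eta(\tilde q)$ becomes $Q$-conjugate to $\eta(h)$.
\item[(c)] For each $q \in G_0$ with $\eta_{i-1}(q) \neq 1$, choose $f_0 \in F$ with $[\eta_{i-1}(q), \eta_{i-1}(f_0)] \neq 1$ (such $f_0$ exists since the non-elementary suitable subgroup has trivial centralizer), and add $[q, f_0]$ to $U_i$; part (i) then preserves $\eta_i([q, f_0]) \neq 1$ in $G_i$ and, by the cumulative choice of $U$, in every later $G_j$ and hence in $Q$.
\end{itemize}
At each stage, parts (i), (iii), and (iv) of Theorem~\ref{thm:main_SCT} ensure that $G_i$ is torsion-free, hyperbolic relative to $\{\eta_i(H)\}$, with $\eta_i$ injective on $H \cup U_i$, and $\eta_i(F)$ suitable, so the induction continues indefinitely.

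Setting $Q := \varinjlim G_i$, all five conditions follow. Claims (2) and (3) are immediate from (a) together with the injectivity of $\eta$ on $H$: one has $N \lhd Q$ with $Q/N \cong H/M$, so $Q = H \cdot N$, $N \cap H = M$, and $N = \eta(F)$ is a quotient of $F$. Claim (1) (malnormality of $H$ in $Q$) follows from Lemma~\ref{lem:malnorm} applied to the peripheral subgroup $\eta_i(H)$ in each $G_i$, combined with a standard lifting argument: any witness $q \in Q \setminus \eta(H)$ to a failure of malnormality in $Q$ would lift to a finite-stage witness in some $G_i$, contradicting malnormality there. Claim (4) is task (c), and claim (5) is task (b).

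The main obstacle is implementing task (b) (condition 5). Theorem~\ref{thm:main_SCT}(ii) alone produces only the weaker conclusion $\eta(t) \in \eta(F)$, not the $Q$-conjugacy of $\eta(\tilde q)$ with $\eta(h)$. To upgrade equality to conjugacy one must exploit part (v)---which identifies $\ker(\eta)$ as being normally generated by elements of $T \cdot \eta(F)$---and choose $T$-sets of the form $\{\tilde q f h^{-1} f^{-1}\}$ with $f$ in the suitable subgroup chosen so that the relation $\eta(\tilde q f h^{-1} f^{-1}) = 1$ actually appears among the normal generators, producing the desired conjugation $\eta(\tilde q) = \eta(f)\eta(h)\eta(f)^{-1}$ in the limit. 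Keeping tasks (a)--(c) simultaneously achievable---without collapsing $H$, destroying the relative hyperbolic structure, or causing $\eta(F)$ to grow to all of $Q$---is the heart of the argument, and requires very careful interleaving in the diagonal enumeration.
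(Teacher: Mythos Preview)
Your overall architecture is right, and tasks (a) and (c) are fine, but task (b)---making every $q$ conjugate in $Q$ to an element of $H$---cannot be accomplished with Theorem~\ref{thm:main_SCT} alone, and the mechanism you sketch does not work. Part (v) says $\ker(\eta)$ is normally generated by elements of $T\cdot S$, not by the elements of $T$ themselves. If you put $t=\tilde q f h^{-1} f^{-1}$ into $T$, the relations actually imposed have the form $t\cdot s=1$ for certain \emph{specific} long small-cancellation words $s\in S$ chosen by the theory; you do not get $\eta(t)=1$, and you have no control letting you force $s=1$. All you recover is $\eta(\tilde q)=\eta(s)^{-1}\eta(f)\eta(h)\eta(f)^{-1}$, which is not a conjugacy. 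More conceptually, when $\bar q$ is hyperbolic in $G_i$ you are trying to make a hyperbolic element conjugate to a parabolic one while staying relatively hyperbolic over $H$; there is no reason the small-cancellation quotient of Theorem~\ref{thm:main_SCT}, whose added relations are words in the suitable subgroup $S\le N$, should do this.

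The paper supplies exactly the missing ingredient: an intermediate HNN step. If $\hat q_{i+1}$ is hyperbolic, write $E_{G(i)}(\hat q_{i+1})=\langle hx\rangle$ with $h\in H$, $x\in N(i)$ (possible by torsion-freeness and $3^\circ$). By Lemma~\ref{lem:Eg} one may add $\langle hx\rangle$ to the peripheral structure; pick $y\in M$ with $hy\neq 1$ and form
\[
G(i+1/2)=\bigl\langle\, G(i),\,t_{i+1}\ \big\|\ t_{i+1}(hx)t_{i+1}^{-1}=hy\,\bigr\rangle,
\]
which is hyperbolic relative to $H$ by Lemma~\ref{lem:HNN-rel_hyp}. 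In $G(i+1/2)$ one already has $t_{i+1}\hat q_{i+1}t_{i+1}^{-1}=(hy)^m\in H$. One checks (via Lemma~\ref{lem:suit-inf_many_non-comm} and Lemma~\ref{lem:HNN-conj}) that $F(i)$ remains suitable in $G(i+1/2)$, and Lemma~\ref{lem:HNN-univer} gives $H\cap K_{i+1}=M$ for the normal closure $K_{i+1}$ of $\langle N(i),t_{i+1}\rangle$. \emph{Now} apply Theorem~\ref{thm:main_SCT} with $T=\{\hat g_{i+1},t_{i+1}\}$: this absorbs the stable letter into $F(i+1)$ and simultaneously handles your task (a), while $\ker(\varphi_i)\le K_{i+1}$ keeps $H\cap N(i+1)=M$. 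The conjugacy $\bar q_{i+1}\stackrel{G(i+1)}{\sim} z_{i+1}\in H$ survives. This HNN-then-quotient maneuver is the heart of the proof of property~5 and is not replaceable by a clever choice of $T$ in Theorem~\ref{thm:main_SCT}.
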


\begin{proof}
The group $Q$ will be constructed as a direct limit of relatively hyperbolic groups.

Step \fbox{0}. Set $G(0)=H * F$ and $F(0)=F$; then $G(0)$ is hyperbolic relative to
its subgroup $H$  and $F(0)$ is a suitable subgroup of $G(0)$ by Lemma \ref{lem:hyp_suit_sbgp_free_prod}.
Let $N(0) \lhd G(0)$ be the normal closure of the subgroup $\langle M, F \rangle$
in $G(0)$.
Evidently $G(0)=H\cdot N(0)$ and $H \cap N(0)=M$. Enumerate all the elements of $N(0)$: $\{g_0,g_1,g_2,\dots\}$, and
of $G(0)$: $\{q_0,q_1,q_2,\dots\}$, in such a way that $g_0=q_0=1$.

Steps \fbox{0-i}. Assume the groups $G(j)$, $j=0,\dots,i$, $i \ge 0$,
have been already constructed, so that
\begin{itemize}
\item[$1^\circ$.] for each $1\le j \le i$ there is an epimorphism $\psi_{j-1}: G(j-1) \to G(j)$ which is injective on (the image of)
$H$ in $G(j-1)$. Denote $F(j)=\psi_{j-1}(F(j-1))$, $N(j)=\psi_{j-1}(N(j-1))$;
\item[$2^\circ$.] $G(j)$ is torsion-free and hyperbolic relative to (the image of) $H$, and $F(j) \le G(j)$
is a suitable subgroup, $j=0,\dots, i$;
\item[$3^\circ$.] $G(j)=H\cdot N(j)$, $N(j)\lhd G(j)$ and $H \cap N(j)=M$, $j=0,\dots, i$;
\item[$4^\circ$.] the natural image $\bar g_j$ of $g_j$  in $G(j)$ belongs to $F(j)$, $j=0,\dots, i$;
\item[$5^\circ$.] there exists $z_j \in H$ such that $\bar q_j \stackrel{G(j)}{\sim} z_j$, $j=0,\dots, i$, where $\bar q_j$
is the image of $q_j$ in $G(j)$.
\end{itemize}

Step \fbox{i+1}. Let $\hat q_{i+1} \in G(i)$, $\hat g_{i+1} \in N(i)$ be the images of $q_{i+1}$ and $g_{i+1}$ in $G(i)$.
First we construct the group $G(i+1/2)$, its normal subgroup $K_{i+1}$ and its element $t_{i+1}$ as follows.

If for some $f \in G(i)$, $f\hat q_{i+1} f^{-1}=z \in H$, then
set $G(i+1/2)=G(i)$, $K_{i+1}=N(i)\lhd G(i+1/2)$ and
$t_{i+1}=1$.

Otherwise, $\hat q_{i+1}$ is a hyperbolic element of infinite order in $G(i)$. Since $G(i)$ is torsion-free, the elementary
subgroup $E_{G(i)}(\hat q_{i+1})$ is cyclic, thus $E_{G(i)}(\hat q_{i+1}) = \langle h x \rangle$ for some $h \in H$ and $x \in N(i)$
(by $3^\circ$), and $\hat q_{i+1}=(hx)^m$ for some $m \in \Z$.
Now, by Lemma \ref{lem:Eg}, $G(i)$ is hyperbolic relative to $\{H,\langle hx \rangle\}$. Choose $y \in M$ so that
$hy \neq 1$ and let $G(i+1/2)$ be the following HNN-extension of $G(i)$:
$$G(i+1/2) = \langle G(i),t_{i+1}~\|~t_{i+1} (hx) t_{i+1}^{-1} = hy\rangle. $$
The group $G(i+1/2)$ is torsion-free and hyperbolic relative to $H$ by Lemma \ref{lem:HNN-rel_hyp}.

Let us now verify that the subgroup $F(i)$ is
suitable in $G(i+1/2)$. Indeed, according to Lemma \ref{lem:suit-inf_many_non-comm}, there are two hyperbolic
elements $f_1,f_2 \in F(i)$ of infinite order in $G(i)$ such that $f_l \stackrel{G(i)}{\not\approx} hx$,
$f_l \stackrel{G(i)}{\not\approx} hy$, $l=1,2$, and
$f_1 \stackrel{G(i)}{\not\approx} f_2$. Then $f_1 \stackrel{G(i+1/2)}{\not\approx} f_2$ by Lemma \ref{lem:HNN-conj}. It remains to check
that $f_l$ is a hyperbolic element of $G(i+1/2)$ for each $l=1,2$. Choose an arbitrary element $w \in H$ and observe that
$f_l \stackrel{G(i)}{\not\approx} w$ (since $H$ is malnormal in $G(i)$ by Lemma \ref{lem:malnorm}, a non-trivial power of $f_l$ is conjugated to
an element of $H$ if and only if $f_l$ is conjugated to an element of $H$ in $G(i)$, but the latter is impossible because
$f_l$ is hyperbolic in $G(i)$). Applying Lemma \ref{lem:HNN-conj} again, we get that $f_l \stackrel{G(i+1/2)}{\not\approx} w$ for any
$w \in H$. Hence $f_1,f_2 \in F(i)$ are hyperbolic elements of infinite order in $G(i+1/2)$. The intersection
$E_{G(i+1/2)}(f_1) \cap E_{G(i+1/2)}(f_2)$ must be finite, since these groups are virtually cyclic (by Lemma \ref{lem:Eg}), and
$f_1$ is not commensurable with $f_2$ in $G(i+1/2)$. But $G(i+1/2)$ is torsion-free, therefore
$E_{G(i+1/2)}(f_1) \cap E_{G(i+1/2)}(f_2)=\{1\}$. Thus $F(i)$ is a suitable subgroup of $G(i+1/2)$.

Lemma \ref{lem:HNN-univer} assures that $H \cap K_{i+1}=M$ where $K_{i+1} \lhd G(i+1/2)$ is the normal closure of
$\langle N(i),t_{i+1} \rangle$ in $G(i+1/2)$. Finally,  note that
$$t_{i+1} \hat q_{i+1} t_{i+1}^{-1}=t_{i+1} (hx)^m t_{i+1}^{-1}=(hy)^m =z \in H~\mbox{ in } G(i+1/2).$$

Now, that the group $G(i+1/2)$ has been constructed, set $T_{i+1}=\{\hat g_{i+1}, t_{i+1}\}\subset K_{i+1}$ and define $G(i+1)$ as follows.
Since $T_{i+1} \cdot F(i) \subset K_{i+1} \lhd G(i+1/2)$,
we can apply Theorem \ref{thm:main_SCT} to find a group $G(i+1)$ and an epimorphism $\varphi_i:G(i+1/2) \to G(i+1)$ such that
$\varphi_i$ is injective on $H$, $G(i+1)$ is torsion-free and hyperbolic relative to (the image of) $H$,
$\{\varphi_i(\hat g_{i+1}), \varphi_i(t_{i+1})\} \subset \varphi_i(F(i))$, $\varphi_i(F(i))$ is a suitable subgroup of $G(i+1)$,
and $\ker(\varphi_i) \le K_{i+1}$. Denote by $\psi_i$ the
restriction of $\varphi_i$ on $G(i)$. Then $\psi_i(G(i))=\varphi_i(G(i))=G(i+1)$ because $G(i+1/2)$ was generated by $G(i)$ and
$t_{i+1}$, and according to the construction,
$t_{i+1} \in \varphi_i(F(i))\le \varphi_i(G(i))$. Now, after defining $F(i+1)=\psi_{i}(F(i))$, $N(i+1)=\psi_{i}(N(i))$,
$\bar g_{i+1}=\varphi_i(\hat g_{i+1}) \in F(i+1)$ and $z_{i+1}=\varphi_i(z) \in H$, we see that the conditions
$1^\circ$,$2^\circ$,$4^\circ$ and $5^\circ$ hold in the case when $j=i+1$. The properties $G(i+1)=H\cdot N(i+1)$ and
$N(i+1)\lhd G(i+1)$ are immediate consequences of their analogs for $G(i)$ and $N(i)$. Finally, observe that
\begin{multline*}\varphi_i^{-1}(H \cap N(i+1))=H\cdot \ker(\varphi_i) \cap N(i) \cdot \ker(\varphi_i) =
\bigl(H \cap N(i) \cdot \ker(\varphi_i)\bigr) \cdot \ker(\varphi_i) \\
\subseteq  \bigl(H \cap K_{i+1} \bigr) \cdot \ker(\varphi_i)= M \cdot \ker(\varphi_i).
\end{multline*}
Therefore $H \cap N(i+1)=M$ and the condition $3^\circ$ holds for $G(i+1)$.

Let $Q=G(\infty)$ be the direct limit of the sequence $(G(i),\psi_i)$ as $i \to \infty$, and let $F(\infty)$ and $N=N(\infty)$ be the limits
of the corresponding subgroups. Then $Q$ is torsion-free by $2^\circ$,
$N\lhd Q$, $Q=H \cdot N$ and $H \cap N=M$ by $3^\circ$. $N \le F(\infty)$ by $4^\circ$, and $5^\circ$ implies
the condition $5$ from the claim.


Since $F(0) \le N(0)$ we get $F(\infty) \le N$.
Thus $N=F(\infty)$ is a homomorphic image of $F(0)=F$.

For any $i,j \in \N \cup \{\infty\}$, $i<j$, we have a natural epimorphism $\zeta_{ij}: G(i) \to G(j)$ such that
if $i<j<k$ then $\zeta_{jk} \circ \zeta_{ij}=\zeta_{ik}$. Take any $g \in G(0)$.
Since $F=F(0)$ is finitely generated, using the properties of direct limits one can show that if
$w=\zeta_{0 \infty}(g) \in C_Q(F(\infty))$ in $Q$, then $\zeta_{0j}(g) \in C_{G(j)}(F(j))$ for some $j\in\N$.
But $C_{G(j)}(F(j)) \le E_{G(j)}(F(j))=\{1\}$ (by formulas \eqref{eq:elem} and \eqref{eq:E_G})
because $F(j)$ is a suitable subgroup of $G(j)$,
hence $w= \zeta_{j\infty}\bigl(\zeta_{0j}(g)\bigr)=1$, that is,
$C_Q(F(\infty))=C_Q(N)=\{1\}$.
This concludes the proof.
\end{proof}

The next statement is well-known: 
\begin{lemma} \label{lem:out-gp} Assume $G$ is a group and $N \lhd G$ is a normal subgroup such that $C_G(N)\subseteq N$, where
$C_G(N)$ is the centralizer  of $N$ in $G$. Then the quotient-group $G/N$ embeds into the outer automorphism group
$Out(N)$.
\end{lemma}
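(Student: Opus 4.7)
The plan is to produce the embedding $G/N \hookrightarrow Out(N)$ via the conjugation action of $G$ on its normal subgroup $N$, and to show that the hypothesis $C_G(N) \subseteq N$ is exactly what forces injectivity on the quotient.

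First, I would define the homomorphism $\varphi : G \to Aut(N)$ by $\varphi(g)(n) = gng^{-1}$; this is well-defined because $N$ is normal, and it is a homomorphism by direct verification. Composing with the natural projection $\pi : Aut(N) \to Out(N) = Aut(N)/Inn(N)$, I obtain $\bar\varphi = \pi \circ \varphi : G \to Out(N)$. For every $n \in N$, the automorphism $\varphi(n)$ is by construction the inner automorphism of $N$ induced by $n$, hence lies in $Inn(N)$, so $N \subseteq \ker \bar\varphi$. By the universal property of the quotient, $\bar\varphi$ descends to a homomorphism $\tilde\varphi : G/N \to Out(N)$.

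The key step is to verify that $\tilde\varphi$ is injective. Suppose $gN \in \ker \tilde\varphi$. Then $\varphi(g) \in Inn(N)$, so there exists $n \in N$ such that $g x g^{-1} = n x n^{-1}$ for every $x \in N$. Rewriting this, $n^{-1}g$ centralizes every element of $N$, i.e.\ $n^{-1}g \in C_G(N)$. The hypothesis $C_G(N) \subseteq N$ now yields $n^{-1}g \in N$, hence $g \in N$ and $gN$ is trivial in $G/N$. Therefore $\ker \tilde\varphi$ is trivial and $\tilde\varphi$ embeds $G/N$ into $Out(N)$.

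No step is genuinely difficult here; the only subtlety is bookkeeping the definitions of $Inn(N)$, $Out(N)$ and the precise meaning of ``inner automorphism of $N$ induced by an element of $G$''. In particular one should note that when writing $\varphi(g) \in Inn(N)$ the conjugating element must be chosen inside $N$, which is exactly what allows the translation of the condition into membership in $C_G(N)$.
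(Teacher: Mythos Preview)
Your proof is correct and follows essentially the same argument as the paper: both define the conjugation map $G\to Aut(N)$, pass to the quotient to get $G/N\to Out(N)$, and verify injectivity by observing that if $\varphi(g)\in Inn(N)$ then $n^{-1}g\in C_G(N)\subseteq N$ for some $n\in N$. The only cosmetic difference is that the paper phrases the injectivity check as a contrapositive, while you argue directly that the kernel is trivial.
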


\begin{proof} The action of $G$ on $N$ by conjugation induces a natural homomorphism $\varphi$
from $G$ to the automorphism group $Aut(N)$ of $N$. Since $\varphi(N)$ is exactly the group of inner automorphisms $Inn(N)$ of $N$,
one can define a new homomorphism $\bar \varphi: G/N \to Out(N)=Aut(N)/Inn(N)$
in the natural way: $\bar \varphi(gN)= \varphi(g)Inn(N)$ for every $gN \in G/N$.
It remains to check that $\bar \varphi$ is injective, i.e., if $g \in G \setminus N$ then $\bar\varphi(gN) \neq 1$ in $Out(N)$;
or, equivalently, $\varphi(g) \notin Inn(N)$. Indeed, otherwise there would exist $a \in N$ such that $ghg^{-1}=aha^{-1}$
for every $h \in N$, thus $N \not\ni a^{-1}g \in C_G(N)$, contradicting the assumptions. Q.e.d.
\end{proof}

Note that for an arbitrary group $N$, any subgroup $C \le Out(N)$ naturally
acts on the set of conjugacy classes $\mathfrak{C}(N)$ of the group $N$.

\begin{thm} \label{thm:out-emb} For any $n \in \N$, $n \ge 2$, and an arbitrary countable group $C$, $C$ can be isomorphically embedded
into the outer automorphism group $Out(N)$ of a group $N$ satisfying the following conditions:
\begin{itemize}
\item $N$ is torsion-free;
\item $N$ is generated by two elements;
\item $N$ has {\ncc} and the natural action  of $C$ on $\mathfrak{C}(N)$ is trivial;
\item $N$ is $2$-boundedly simple.
\end{itemize}
\end{thm}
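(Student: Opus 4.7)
The plan is to combine Lemma \ref{lem:ncc-normal}, Theorem \ref{thm:ext-main}, and Lemma \ref{lem:out-gp}. First I would apply Lemma \ref{lem:ncc-normal} to the given countable group $C$ and integer $n$ to obtain a countable torsion-free group $H$ with a non-trivial normal subgroup $M\lhd H$ such that $M$ has {\ncc}, $M$ is $2$-boundedly simple, $H/M\cong C$, and the conjugation action of $H$ on $M$ preserves the $M$-conjugacy classes. Then, taking $F$ to be the free group of rank $2$ (any non-elementary torsion-free word hyperbolic $2$-generated group would do), I would apply Theorem \ref{thm:ext-main} to the pair $(H,M)$ and to this $F$ to obtain a torsion-free group $Q\supseteq H$ and a normal subgroup $N\lhd Q$ such that $H$ is malnormal in $Q$, $Q=HN$, $H\cap N=M$, $N$ is a quotient of $F$ (hence $2$-generated), $C_Q(N)=\{1\}$, and every element of $Q$ is $Q$-conjugate to some element of $H$.

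The embedding $C\hookrightarrow Out(N)$ then follows at once from Lemma \ref{lem:out-gp}: since $C_Q(N)=\{1\}\subseteq N$, the quotient $Q/N$ embeds into $Out(N)$, and $Q/N=HN/N\cong H/(H\cap N)=H/M\cong C$. Torsion-freeness of $N$ (inherited from $Q$) and $2$-generation of $N$ (as a quotient of $F$) are immediate. To check that $N$ has exactly $n$ conjugacy classes, I would fix representatives $x_1,\dots,x_{n-1}\in M$ of the non-trivial $M$-conjugacy classes. For any non-trivial $y\in N$, property $5$ of Theorem \ref{thm:ext-main} gives $q\in Q$ with $qyq^{-1}\in H$; since $N\lhd Q$ this element in fact lies in $H\cap N=M$. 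Writing $q=hn_0$ with $h\in H$ and $n_0\in N$, one observes that $n_0yn_0^{-1}=h^{-1}(qyq^{-1})h$ lies in $M$ because $M\lhd H$, so $y$ is $N$-conjugate to an element of $M$, which in turn is $M$-conjugate (hence $N$-conjugate) to some $x_i$. Therefore every non-trivial element of $N$ is $N$-conjugate to one of $x_1,\dots,x_{n-1}$.

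The main obstacle is to rule out collisions among the classes $[x_i]_N$, and the same step will also yield triviality of the $C$-action on $\mathfrak{C}(N)$. Suppose $x_i\stackrel{Q}{\sim} x_j$ via some $q'\in Q$; since $x_i,x_j\in H\setminus\{1\}$, malnormality of $H$ in $Q$ forces $q'\in H$, and then property (iii) of Lemma \ref{lem:ncc-normal} gives $x_i\stackrel{M}{\sim} x_j$, so $i=j$. In particular $x_i\stackrel{N}{\sim} x_j$ implies $i=j$, giving exactly $n$ conjugacy classes in $N$. Applying the same argument to $x_i$ and the $x_k$ for which $qx_iq^{-1}\stackrel{N}{\sim} x_k$ yields $k=i$, so every coset $qN\in Q/N\cong C$ fixes each $[x_i]_N$, making the $C$-action on $\mathfrak{C}(N)$ trivial. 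Finally, $2$-bounded simplicity of $N$ follows from that of $M$: for non-trivial $x,y\in N$, write $x=n_1x_in_1^{-1}$ and $y=n_2x_jn_2^{-1}$ with $n_1,n_2\in N$, use $2$-bounded simplicity of $M$ to express $x_i$ as a product of at most two $M$-conjugates of $x_j$, and conjugate by appropriate elements of $N$ to rewrite $x$ as a product of at most two $N$-conjugates of $y$. Throughout, the crucial interplay is between properties $1$ (malnormality) and $5$ of Theorem \ref{thm:ext-main} and property (iii) of Lemma \ref{lem:ncc-normal}, which together transfer the conjugacy structure of $M$ in $H$ faithfully to $N$ in $Q$.
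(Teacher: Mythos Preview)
Your proposal is correct and follows essentially the same approach as the paper's proof: invoke Lemma~\ref{lem:ncc-normal} and Theorem~\ref{thm:ext-main} (with $F$ free of rank~$2$), apply Lemma~\ref{lem:out-gp} for the embedding $C\hookrightarrow Out(N)$, and then use property~5 together with the decomposition $Q=HN$ to show every element of $N$ is $N$-conjugate into $M$, while malnormality of $H$ and property~(iii) of Lemma~\ref{lem:ncc-normal} prevent collapse of conjugacy classes and yield triviality of the $C$-action. Your step showing $n_0yn_0^{-1}\in M$ directly via $M\lhd H$ is in fact a slight streamlining of the paper's version (which routes through an auxiliary element $r\in M$ to land back at the specific $z$), but the overall strategy is identical.
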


\begin{proof} By Lemma \ref{lem:ncc-normal} we can find a countable torsion-free group $H$ and its normal subgroup $M$ enjoying the
properties (i)-(iv) from its claim. Now, if $F$ denotes the free group of rank $2$, we can obtain a countable torsion-free group $Q$
together with its normal subgroup $N$ that satisfy the conditions $1$-$5$ from the statement of Theorem \ref{thm:ext-main}.

Then $N$ is torsion-free and generated by two elements (as a quotient of $F$). Condition $2$ implies that $Q/N \cong H/M \cong C$ and,
by $4$ and Lemma \ref{lem:out-gp}, $C$ embeds into the group $Out(N)$.

Using property $5$, for each $g \in N$ we can find $u \in Q$ and $z \in H$ such that $ugu^{-1}=z \in N \cap H=M$. Since $Q=HN$, there
are $h \in H$ and $x \in N$ such that $u=hx$. Since $z,h^{-1}zh \in M$ and the action of $H$ on $M$ preserves the conjugacy classes of $M$,
there is $r \in M$ such that $rh^{-1}zhr^{-1}=z$, hence $z=rh^{-1}ugu^{-1}(rh^{-1})^{-1}=rxgx^{-1}r^{-1}$, where $v=rx \in N$.
Thus for every $g \in N$ there is $v \in N$ such that $vgv^{-1} \in M$. Evidently, this implies that $N$ is also $2$-boundedly simple.
Since $M$ has {\ncc}, the number of conjugacy classes in $N$ will
be at most $n$.

Suppose $x_1,x_2 \in M$ and $x_1 \stackrel{M}{\nsim} x_2$. Then $x_1 \stackrel{H}{\nsim} x_2$ (by the property (iii)
from the claim of Lemma \ref{lem:ncc-normal}), and since $H$ is malnormal in $Q$ we get $x_1 \stackrel{Q}{\nsim} x_2$.
Hence $x_1 \stackrel{N}{\nsim} x_2$, i.e., $N$ also enjoys {\ncc}.

The fact that the natural action of $C$ on $\mathfrak{C}(N)$ is trivial follows from the same property for the action of $H$ on
$\mathfrak{C}(M)$ and the malnormality of $H$ in $Q$. Q.e.d.
\end{proof}

Now, let us proceed with the
\begin{proof}[Proof of Theorem \ref{thm:emb-ext-non-simple}.] First we apply Lemma \ref{lem:emb-non-simple}
to construct a group $G$ and a normal subgroup $K \lhd G$
according to its claim. Now, by Theorem \ref{thm:ext-main}, there is a group $Q$, having a normal subgroup $N \lhd Q$ such that
$G$ is malnormal in $Q$, $Q=GN$, $G\cap N=K$, $rank(N) \le 2$ (if one takes the free group of rank $2$ as $F$)
and every element $q \in Q$ is conjugated (in $Q$) to an element of $G$.
By claim 2) of Lemma \ref{lem:emb-non-simple}, $K$ has {\cc}, and an argument, similar to the one used in the proof
of Theorem \ref{thm:out-emb}, shows that $N$ will also have {\cc}. Consequently, $rank(N) > 1$ because $N$ is torsion-free,
hence $rank(N)=2$.

Since $G=HK$ and $H \cap K=M$ we have $Q=HKN=HN$ and $H \cap N=H \cap K=M$. Since $Q/N \cong H/M$ and $N$ can be generated by
two elements, we can conclude that $rank(Q) \le rank(H/M)+2$.

Consider arbitrary $x,y \in Q\setminus \{1\}$ and suppose that $\varphi(x) \stackrel{Q/N}{\sim} \varphi(y)$.
By Theorem \ref{thm:ext-main}, there are $w,z \in G\setminus \{1\}$ such that $x \stackrel{Q}{\sim} w$ and $y \stackrel{Q}{\sim} z$.
Therefore $\varphi(w) \stackrel{Q/N}{\sim} \varphi(z)$, hence the images of $w$ and $z$ in $G/K$ are also conjugate.
By claim 3) of Lemma \ref{lem:emb-non-simple}, $w \stackrel{G}{\sim} z$, implying $x \stackrel{Q}{\sim} y$.
\end{proof}

Theorem \ref{thm:emb-ext-non-simple} provides an alternative way of obtaining torsion-free groups that have finitely many conjugacy classes:
for any countable group $C$ we can choose a free group $H$ of countable rank and a normal subgroup $\{1\}\neq M \lhd H$ so that
$H/M \cong C$, and then apply Theorem \ref{thm:emb-ext-non-simple} to the pair $(H,M)$ to get

\begin{cor}\label{cor:ncc-non-simple} Assume that $n \in \N$, $n \ge 2$, and $C$ is a countable group that contains exactly
$(n-1)$ distinct conjugacy classes. Then there exists a torsion-free group $Q$ and $N \lhd Q$ such that
\begin{itemize}
\item $Q/N \cong C$;
\item $N$ has {\cc} and $Q$ has {\ncc};
\item $rank(N)=2$ and $rank(Q) \le rank(C)+2$.
\end{itemize}
\end{cor}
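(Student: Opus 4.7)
The plan is to apply Theorem~\ref{thm:emb-ext-non-simple} directly to a suitable pair $(H,M)$ with $H/M\cong C$, as indicated in the paragraph preceding the corollary. First I would choose $H$ to be a free group of countable rank together with a non-trivial normal subgroup $M\lhd H$ satisfying $H/M\cong C$; such a pair exists for every countable group $C$, because $H$ admits surjections onto any countable group, and one can always arrange the kernel to be non-trivial (even when $C$ is itself free of countable rank, by picking a surjection that collapses some free generator). The group $H$ is then torsion-free and countable, and $M\neq\{1\}$, so Theorem~\ref{thm:emb-ext-non-simple} applies.

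I would then feed the pair $(H,M)$ into that theorem to obtain a torsion-free group $Q$ and a normal subgroup $N\lhd Q$ satisfying all four bullets of its statement. Three of the conclusions of the corollary drop out immediately: $Q/N\cong H/M\cong C$; $N$ has {\cc}; and $rank(N)=2$ together with $rank(Q)\le rank(H/M)+2=rank(C)+2$.

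The only remaining point is to verify that $Q$ has {\ncc}. Here I would use the conjugacy criterion supplied by Theorem~\ref{thm:emb-ext-non-simple}: for $x,y\in Q\setminus\{1\}$, $x\stackrel{Q}{\sim}y$ if and only if $\varphi(x)\stackrel{Q/N}{\sim}\varphi(y)$, where $\varphi\colon Q\to Q/N$ denotes the natural projection. This criterion says that $\varphi$ induces an \emph{injection} of the non-trivial conjugacy classes of $Q$ into the set of conjugacy classes of $Q/N\cong C$. Surjectivity of this induced map is immediate: every non-trivial class of $C$ is hit by any of its preimages in $Q$, and the trivial class of $C$ is hit by any element of $N\setminus\{1\}$, which exists since $N$ has {\cc}. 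Hence $Q$ has exactly $(n-1)$ non-trivial conjugacy classes, plus the trivial class $\{1\}$, giving $n$ conjugacy classes in total. There is no substantive obstacle in this argument --- the heavy lifting is all absorbed into Theorem~\ref{thm:emb-ext-non-simple}, and what remains is a routine count of conjugacy classes together with the mild bookkeeping needed to ensure that $M$ can be chosen non-trivial.
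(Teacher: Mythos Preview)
Your proposal is correct and follows essentially the same approach as the paper: choose a free group $H$ of countable rank with a non-trivial normal subgroup $M$ such that $H/M\cong C$, then apply Theorem~\ref{thm:emb-ext-non-simple}. The paper states this outline in the paragraph preceding the corollary and leaves the routine verification of {\ncc} implicit; you have supplied that verification explicitly and correctly.
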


\begin{cor} \label{cor:klein-bottle-emb} The group $G_1$, given by presentation \eqref{eq:Kl-bot}, can be isomorphically embedded into a
$2$-generated torsion-free group $Q$ satisfying {\rm ($4$CC)} in such a way that $t \stackrel{Q}{\nsim} t^{-1}$.
\end{cor}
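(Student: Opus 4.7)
The plan is to first construct, via Theorem \ref{thm:emb-ext-non-simple}, an auxiliary extension $Q_0$ over a suitable $3$-element quotient of $G_1$, and then reduce the rank of $Q_0$ down to $2$ using Lemma \ref{lem:fg-emb}. Concretely, I would define a surjective homomorphism $\pi\colon G_1 \to \Z/3\Z$ by $\pi(a)=0$, $\pi(t)=1$: the defining relator $tat^{-1}a$ maps to $0 \in \Z/3\Z$, so $\pi$ is well-defined, and $\pi(t)=1 \ne 2=\pi(t^{-1})$ shows that $\pi$ separates $t$ from $t^{-1}$. Let $M=\ker\pi \lhd G_1$, which is non-trivial (it contains $a$) with $G_1/M \cong \Z/3\Z$.

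Next, I would feed the pair $(G_1, M)$ into Theorem \ref{thm:emb-ext-non-simple}, producing a countable torsion-free group $Q_0 \supseteq G_1$ and a normal subgroup $N \lhd Q_0$ such that $Q_0=G_1\cdot N$, $G_1 \cap N = M$, $N$ satisfies {\cc}, and $x \stackrel{Q_0}{\sim} y$ if and only if $\varphi(x) \stackrel{Q_0/N}{\sim} \varphi(y)$ for all $x,y \in Q_0 \setminus \{1\}$, where $\varphi\colon Q_0 \to Q_0/N \cong \Z/3\Z$ is the quotient map. Since $\varphi$ is surjective onto the abelian group $\Z/3\Z$ (which has exactly $3$ conjugacy classes), this equivalence produces exactly three non-trivial $Q_0$-conjugacy classes, namely $\varphi^{-1}(0)\setminus\{1\}=N\setminus\{1\}$, $\varphi^{-1}(1)$, and $\varphi^{-1}(2)$. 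Hence $Q_0$ has {\rm ($4$CC)}, and $\varphi(t)\ne\varphi(t^{-1})$ forces $t\stackrel{Q_0}{\nsim} t^{-1}$.

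To reduce to a $2$-generated group, I would apply Lemma \ref{lem:fg-emb} to embed the countable torsion-free group $Q_0$ into a $2$-generated torsion-free group $Q$ in which $Q_0$ is malnormal and every element of $Q$ is $Q$-conjugate into $Q_0$. Malnormality forces any $g \in Q$ with $gxg^{-1}=y$ and $x,y \in Q_0 \setminus \{1\}$ to lie in $Q_0$ (otherwise $1 \ne y \in Q_0 \cap gQ_0g^{-1}=\{1\}$), so the $Q$-conjugacy relation on $Q_0 \setminus \{1\}$ coincides with the $Q_0$-conjugacy relation. Combined with the fact that every non-trivial $Q$-conjugacy class meets $Q_0$, this yields $Q$ with exactly $4$ conjugacy classes and $t \stackrel{Q}{\nsim} t^{-1}$, as required. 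The only place where care is needed is the conjugacy-class count in $Q_0$, which uses the full strength of claim 3 of Theorem \ref{thm:emb-ext-non-simple}: it is this claim that forces $N\setminus\{1\}$ to be a single $Q_0$-conjugacy class, rather than merely a single $N$-conjugacy class as the {\cc} property of $N$ would guarantee on its own.
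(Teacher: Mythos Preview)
Your argument is correct, but the route to $2$-generation differs from the paper's. You first build $Q_0$ via Theorem~\ref{thm:emb-ext-non-simple} and then pass to a further malnormal $2$-generated overgroup using Lemma~\ref{lem:fg-emb}; malnormality and the ``every element is conjugate into $Q_0$'' clause then transport the (4CC) structure and the relation $t\stackrel{Q_0}{\nsim}t^{-1}$ up to $Q$. The paper, by contrast, stays inside the group produced by Theorem~\ref{thm:emb-ext-non-simple} and observes directly that it is already $2$-generated: pick $q_1\in Q\setminus N$, so that $q_1^3\in N\setminus\{1\}$; since $N$ has {\cc} and $rank(N)=2$, one can choose $q_3\in N$ with $N=\langle q_1^3,q_3\rangle$, and then $Q=\langle q_1,q_3\rangle$ because $Q/N$ is cyclic generated by the image of $q_1$. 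The paper's approach is more economical (no second embedding, and the resulting $Q$ is smaller), while yours avoids the rank computation at the cost of an extra application of Lemma~\ref{lem:fg-emb}. One minor point: Lemma~\ref{lem:fg-emb} does not state explicitly that $Q$ is torsion-free, but this follows immediately since every element of $Q$ is conjugate into the torsion-free subgroup $Q_0$.
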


\begin{proof} Denote by $K$ the kernel of the homomorphism $\varphi: G_1 \to \Z_3$,
for which $\varphi(a)=0$ and $\varphi(t)=1$, where $\Z_3$ is the group of integers modulo $3$. Now, apply Theorem \ref{thm:emb-ext-non-simple}
to the pair $(G_1,K)$ to find the group $Q$, containing $G_1$, and the normal subgroup $N \lhd Q$ from its claim.
Since $Q/N \cong G_1/K \cong\Z_3$ has ($3$CC), the group $Q$ will have ($4$CC).
We also have $t \stackrel{Q}{\nsim} t^{-1}$ because the images of $t$ and $t^{-1}$ are not conjugate in $Q/N$.

Choose an element $q_1\in Q\setminus N$. Then $q_2=q_1^3 \in N \setminus \{1\}$ and since $N$ is $2$-generated and
has {\cc}, there is $q_3 \in N$ such that $N=\langle q_2,q_3 \rangle$ in $Q$. As $Q/N$ is generated by the image of
$q_1$, the group $Q$ will be generated by $\{q_1,q_2,q_3\}$, and, consequently, by $\{q_1,q_3\}$. Q.e.d.
\end{proof}

\section{Combinatorics of paths in relatively hyperbolic groups}\label{sec:comb_paths}
Let $G$ be a group hyperbolic relative
to a family of proper subgroups $\{H_\lambda\}_{\lambda \in \Lambda}$, and let $\mathcal{X}$ be a finite symmetrized
relative generating set of $G$. Denote $\mathcal{H}=\bigcup_{\lambda \in \Lambda} \left( H_\lambda \setminus \{1\} \right)$.

For a  combinatorial path $p$ in the Cayley graph {\ga} (of
$G$ with respect to $\cX\cup \cH$) $p_-$, $p_+$, $\L(p)$, and
$lab (p)$ will denote the initial point, the ending point, the length
(that is, the number of edges) and the label of $p$ respectively. $p^{-1}$
will be the path obtained from $p$ by following it in the reverse direction.
Further, if $\Omega$ is a subset of $G$ and $g \in \langle \Omega \rangle \le G$, then
$|g|_\Omega$ will be used to denote the length of a shortest word in $\Omega^{\pm 1}$ representing
$g$.

We will be using the following terminology from \cite{Osin-RHG}. Suppose $q$ is a
path in {\ga}.
A subpath $p$ of $q$ is called an {\it $H_\lambda $-component}
for some $\lambda \in \Lambda $ (or simply a {\it component}) of
$q$, if the label of $p$ is a word in the alphabet
$H_\lambda\setminus \{ 1\} $ and $p$ is not contained in a bigger
subpath of $q$ with this property.

Two components $p_1, p_2$ of a path $q$ in {\ga} are called {\it
connected} if they are $H_\lambda $-components for the same
$\lambda \in \Lambda $ and there exists a path $c$ in {\ga}
connecting a vertex of $p_1$ to a vertex of $p_2$ such that ${lab
(c)}$ entirely consists of letters from $ H_\lambda $. In
algebraic terms this means that all vertices of $p_1$ and $p_2$
belong to the same coset $gH_\lambda $ for a certain $g\in G$.
 We can always assume $c$ to have length at most $1$, as
every nontrivial element of $H_\lambda $ is included in the set of
generators.  An $H_\lambda $-component $p$ of a path $q$ is
called {\it isolated } if no other $H_\lambda $-component of
$q$ is connected to $p$.

The next statement is a particular case of Lemma 2.27 from \cite{Osin-RHG}; we shall formulate it in a slightly more general
form, as it appears in \cite[Lemma 2.7]{Osin-periph}:



\begin{lemma}\label{lem:omega}
Suppose that a group $G$ is hyperbolic relative to a family of
subgroups $\Hl $. Then there exists a finite subset $\Omega
\subseteq G$ and a constant $K \in \N$ such that the following
holds. Let $q$ be a cycle in {\ga}, $p_1, \ldots , p_k$ be
a collection of isolated components of $q$ and  $g_1, \ldots , g_k$ be the elements of $G$
represented by $\lab(p_1), \ldots, \lab(p_k)$
respectively. Then $g_1, \ldots , g_k$ belong to the subgroup
$\langle \Omega\rangle \le G$  and the word lengths of $g_i$'s
with respect to $\Omega$ satisfy
$$ \sum\limits_{i=1}^k |g_i|_{\Omega} \le K\L(q).$$
\end{lemma}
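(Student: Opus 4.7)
The plan is to exploit the linear relative isoperimetric inequality together with a van Kampen diagram argument, tracking $H_\lambda$-edges through the diagram. Fix a finite relative presentation $\langle \cX, \Hl \,\|\, \mathcal R\rangle$ of $G$ satisfying a linear relative isoperimetric inequality with constant $C>0$, and let $M=\max\{\|R\|\,:\,R\in\mathcal R\}$. Define
$$\Omega \;=\; \left\{h \in \textstyle\bigsqcup_{\lambda}(H_\lambda\setminus\{1\}) \;:\; h \text{ appears as a letter in some } R\in\mathcal R\right\},$$
which is finite because $\mathcal R$ is finite and each relator has length at most $M$. Set $K=CM$. I expect this $\Omega$ to do the job, since every $H_\lambda$-letter that can arise in a short derivation of an $H_\lambda$-word must already be forced to lie in $\Omega$.

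Given a cycle $q$ with isolated components $p_1,\dots,p_k$, the label of $q$ represents $1$ in $G$, so by the linear isoperimetric inequality there is a van Kampen diagram $\Delta$ over the relative presentation with $\partial\Delta=q$ and at most $C\,\L(q)$ $2$-cells, each of boundary length at most $M$. For each $\lambda\in\Lambda$ introduce the equivalence relation on $H_\lambda$-edges of $\Delta$ generated by lying on the boundary of a common $2$-cell or sharing a vertex; two edges are equivalent exactly when the corresponding vertices of their endpoints lie in a common left coset $gH_\lambda$. The isolation hypothesis on $p_i$ says precisely that the $H_\lambda$-edges making up $p_i$ form an equivalence class which contains no other boundary edge of $q$; in particular, two distinct $p_i$'s lie in distinct equivalence classes.

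Now I would express the label $g_i$ of $p_i$ as a product of $H_\lambda$-letters drawn from boundaries of $2$-cells in $\Delta$ whose equivalence class is that of $p_i$; these letters all lie in $\Omega$, so $g_i\in\langle\Omega\rangle$ and $|g_i|_\Omega$ is bounded by the number of such contributing letters. By the previous paragraph, each interior $H_\lambda$-edge on a $2$-cell boundary contributes to at most one of the equivalence classes of $p_1,\dots,p_k$. Summing the individual bounds therefore gives
$$\sum_{i=1}^k |g_i|_\Omega \;\le\; \text{(total }H\text{-letters on }2\text{-cell boundaries)} \;\le\; M\cdot C\,\L(q) \;=\; K\,\L(q),$$
as required.

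The main technical obstacle is the combinatorial bookkeeping in the middle step: one must set up $\Delta$ carefully (choosing a minimal diagram, possibly after merging adjacent $H_\lambda$-edges or collapsing cosets) so that the equivalence classes are well-defined and each interior $2$-cell edge is charged to at most one $g_i$. Formalizing this charging is essentially the content of Osin's proof of Lemma 2.27 in \cite{Osin-RHG}, and the isolation hypothesis enters precisely to prevent distinct $p_i$'s from merging into the same class. Once that bookkeeping is in place, the counting step is immediate.
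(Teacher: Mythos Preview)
The paper does not prove this lemma at all: it is quoted verbatim from Osin's work (a special case of \cite[Lemma~2.27]{Osin-RHG}, in the form of \cite[Lemma~2.7]{Osin-periph}), so there is no in-paper argument to compare against. Your sketch is in fact an outline of Osin's original proof, and you say as much in the final paragraph.

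That said, there is a genuine gap in your outline that is worth naming, because it is exactly where the work lies. A van Kampen diagram over the \emph{relative} presentation has two kinds of $2$-cells: $\mathcal R$-cells, whose boundary labels lie in the finite set $\mathcal R$, and $\mathcal S$-cells, whose boundary labels are arbitrary words over $H_\lambda\setminus\{1\}$ representing $1$ in $H_\lambda$. The linear relative isoperimetric inequality bounds only the number of $\mathcal R$-cells by $C\,\L(q)$; the number and perimeters of $\mathcal S$-cells are not bounded. Hence your sentence ``there is a van Kampen diagram $\Delta$ \ldots\ with at most $C\,\L(q)$ $2$-cells, each of boundary length at most $M$'' is false as stated, and the subsequent counting argument does not go through directly. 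Relatedly, your equivalence relation (``lying on the boundary of a common $2$-cell or sharing a vertex'') does not coincide with the coset relation: two $H_\lambda$-edges on the same $\mathcal R$-cell separated by $\cX$-letters need not lie in the same $H_\lambda$-coset.

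Osin's actual argument handles this by observing that every edge of an $\mathcal S$-cell is an $H_\lambda$-edge for a single $\lambda$, so each $\mathcal S$-cell lies entirely inside one $H_\lambda$-coset class. One then takes the maximal connected subdiagram $\Sigma_i$ consisting of $p_i$ together with all $\mathcal S$-cells in the same coset class; isolation of $p_i$ forces every edge of $\partial\Sigma_i$ other than those of $p_i$ to lie on some $\mathcal R$-cell. Reading around $\partial\Sigma_i$ expresses $g_i$ as a product of $H_\lambda$-letters drawn from $\mathcal R$-cell boundaries, i.e.\ letters in $\Omega$, and distinct $\Sigma_i$'s are disjoint, so the charging works against the $\mathcal R$-cells alone. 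Your last paragraph gestures at exactly this (``collapsing cosets''), but the body of the argument as written needs the $\mathcal R$/$\mathcal S$ distinction made explicit before the inequality can be justified.
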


\begin{df} Suppose that  $m \in \N$  and
$\Omega$ is a finite subset of $G$. Define $\cW (\Omega,m)$ to be the set of all
words $W$ over the alphabet $\cX \cup \cH$ that have the following form:
$$W \equiv x_0h_0x_1h_1 \dots x_l h_l x_{l+1},$$ where $ l \in \Z$, $l \ge -2$ (if $l=-2$ then $W$ is the empty word;
if $l=-1$ then $W \equiv x_0$),
$h_i$ and $x_i$ are considered as single letters and
\begin{itemize}
\item[1)] $x_i \in \cX \cup \{1\}$, $i=0,\dots,l+1$,
and for each $i=0,\dots,l$, there exists $\lambda(i) \in \Lambda$ such that $h_i \in H_{\lambda(i)}$;
\item[2)] if $\lambda(i)=\lambda(i+1)$ then $x_{i+1} \notin H_{\lambda(i)}$ for each $i=0,\dots,l-1$;
\item[3)] $h_i \notin \{h \in \langle \Omega \rangle ~:~|h|_\Omega \le m \}$, $i=0,\dots,l$.
\end{itemize}
\end{df}

Choose the finite subset $\Omega \subset G$ and the constant $K>0$ according to the claim of Lemma \ref{lem:omega}.

Recall that a path $q$ in {\ga} is said to be {\it without
backtracking} if all of its components are isolated.

\begin{lemma} \label{lem:no_back} Let $q$ be a path in the Cayley graph {\ga} with $\lab(q) \in \cW (\Omega,m)$ and $m \ge 5K$.
Then $q$ is without backtracking.
\end{lemma}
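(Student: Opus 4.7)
The plan is to argue by contradiction. Assume $q$ has some pair of connected components, and use the structure of $\lab(q) \in \cW(\Omega,m)$ together with Lemma \ref{lem:omega} to derive a contradiction. Label $\lab(q) \equiv x_0 h_0 x_1 h_1 \cdots x_l h_l x_{l+1}$ and let $p_k$ denote the $H_{\lambda(k)}$-component of $q$ labeled by $h_k$, with endpoints $v_k = (p_k)_-$, $w_k = (p_k)_+$. Suppose, for contradiction, that there exist $i<j$ with $p_i$ and $p_j$ connected (so $\lambda(i)=\lambda(j)=:\lambda$); choose such a pair with $j-i$ minimal.

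First I would dispose of the case $j=i+1$: connectedness would give $w_i^{-1}v_{i+1}=x_{i+1} \in H_\lambda$, but condition (2) in the definition of $\cW(\Omega,m)$ forces $x_{i+1}\notin H_{\lambda(i)}$, contradiction. (The possibility $x_{i+1}=1$ is also excluded since $1\in H_{\lambda(i)}$.)

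So $j-i \ge 2$. Let $u$ be the subpath of $q$ from $w_i$ to $v_j$, so $\lab(u) \equiv x_{i+1} h_{i+1} x_{i+2} \cdots h_{j-1} x_j$, and let $e$ be an edge (of length at most $1$) from $v_j$ to $w_i$ labeled by the element $v_j^{-1}w_i \in H_\lambda$. Set $c'=u\cdot e$; this is a cycle. The components of $c'$ are $p_{i+1},\dots,p_{j-1}$ together with $e$. The key step will be to check that every one of these components is isolated in $c'$: any $p_k$ connected to some other $p_{k'}$ in the range, or to $e$, would yield a pair of connected components at distance strictly less than $j-i$ (distance to $e$ coincides, via the coset of $w_i$, with the distance to $p_i$), contradicting minimality.

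Once isolation is established, apply Lemma \ref{lem:omega} to $c'$ to obtain
\[
\sum_{k=i+1}^{j-1} |h_k|_{\Omega} \;\le\; K\,\L(c').
\]
Using condition (3) of $\cW(\Omega,m)$ the left-hand side is strictly greater than $m(j-1-i)$, while counting edges gives $\L(c') \le (j-i) + (j-1-i) + 1 = 2(j-i)$. Hence $m(j-i-1) < 2K(j-i)$, i.e., $j-i < m/(m-2K)$; for $m\ge 5K$ this forces $j-i < 5/3$, contradicting $j-i\ge 2$. The main delicate point is the isolation verification in $c'$, particularly ruling out that $H_\lambda$-components $p_k$ inside the range or the added edge $e$ share a coset with each other or with $p_i, p_j$; this is precisely where the minimal choice of $(i,j)$ is used.
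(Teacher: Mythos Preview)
Your proof is correct and follows essentially the same route as the paper's. The only cosmetic difference is the framing of minimality: the paper chooses a counterexample path $q$ of minimal length (so that the first and last components are the connected pair and all intermediate components are isolated), whereas you fix $q$ and choose a connected pair $(p_i,p_j)$ with $j-i$ minimal. Both lead to the same cycle---your $c'=u\cdot e$ is exactly the paper's $o=\hat q\, v$---and the identical arithmetic $5K(j-i-1)<2K(j-i)$ forcing $j-i<5/3$.
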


\begin{proof} Assume the contrary to the claim. Then one can choose a path $q$ providing a counterexample
of the smallest possible length. Thus if $p_1,\dots, p_l$ is the (consecutive) list of all components of $q$
then $l \ge 2$, $p_1$ and $p_l$ must be connected $H_{\lambda'}$-components, for some $\lambda' \in \Lambda$,
the components $p_2,\dots,p_{l-1}$ must be isolated, and $q$ starts with $p_1$ and ends with $p_l$.
Since $\lab(q) \in \cW(\Omega,m)$ we have $\L(q)\le 2l-1$.

If $l=2$ then the $(\cX \cup \{1\})$-letter between $p_1$ and $p_2$ would belong to $H_{\lambda'}$ contradicting
the property 2) from the definition of $\cW (\Omega,m)$.

Therefore $l\ge 3$.
Since $p_1$ and $p_l$ are connected, there exists a path $v$ in {\ga} between $(p_l)_-$ and $(p_1)_+$ with
$\lab(v) \in H_{\lambda'}$ (thus we can assume that $\L(v)\le 1$).
Denote by $\hat q$ the subpath of $q$ starting with $(p_1)_+$ and ending with $(p_l)_-$.
Note that $\L(\hat q) = \L(q)-2\le 2l-3$, and $p_2, \dots, p_{l-1}$ is the list of components of $\hat q$, all of which are isolated.
If one of them were
connected to $v$ it would imply that it is connected to $p_1$ contradicting with the minimality of $q$.
Hence the cycle $o=\hat q v$ possesses $k= l-2\ge 1$ isolated components,
which represent elements $h_1,\dots,h_k \in \cH$.
Consequently, applying Lemma \ref{lem:omega} one obtains that $h_i \in \langle \Omega \rangle$, $i=1,\dots,k$, and
$$\sum_{i=1}^k |h_i|_\Omega \le K\L(o) \le K(\L(\hat q)+1) \le K(2l-2).$$
By the condition 3) from the definition of $\cW (\Omega,m)$ one has $|h_i|_\Omega>m \ge 5K$ for each $i=1,\dots,k$. Hence
$$k \cdot 5 K \le \sum_{i=1}^k |h_i|_\Omega \le K(2l-2), \mbox{ or } 5 \le \frac{2l-2}{k},$$
which contradicts the inequality $k \ge l-2$. Q.e.d.
\end{proof}

\begin{df}
Consider an arbitrary cycle $o=rqr'q'$ in {\ga}, where $\lab(q)$ and
$\lab(q')$ belong to $\cW (\Omega,m)$. Let
$p$ be a component of $q$ (or $q'$). We will say that $p$ is {\it
regular} if it is not an isolated component of $o$. If $m\ge 5K$, and hence $q$ and
$q'$ are without backtracking by Lemma \ref{lem:no_back}, this means that $p$ is either
connected to some component of $q'$ (respectively $q$), or to a
component of $r$ or $r'$.
\end{df}

\begin{lemma}\label{lem:regul} In the above notations,
suppose that $m\ge 7K$ and denote $C=\max\{ \L(r),\L(r')\}$. Then
\begin{itemize}
\item[\rm (a)] if $C\le 1$ then every component of $q$ or $q'$ is regular;
\item[\rm (b)] if $C\ge 2$ then each of $q$ and $q'$ can have at most $4C$ components which are not regular.
\item[\rm (c)] if $l$ is the number of components of $q$, then at least $(l-6C)$ of
components of $q$ are connected to components of $q'$; and two distinct components of $q$ can not be connected to the same
component of $q'$. Similarly for $q'$.
\end{itemize}
\end{lemma}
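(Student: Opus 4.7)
The unifying strategy is to apply Lemma~\ref{lem:omega} to the cycle $o$ and to sub-cycles of $o$ obtained by a \emph{surgery}: for every regular component $\pi$ of $q$ (or $q'$) connected to a component $\tilde\pi$ of another portion of $o$, one can insert a chord edge in $\Gamma(G,\cX\cup\cH)$ of length $\le 1$ joining a vertex of $\pi$ to a vertex of $\tilde\pi$, since these vertices lie in a common coset of some $H_\lambda$. Cutting $o$ along these chords produces sub-cycles whose total length is controlled by $\L(r)+\L(r')$ plus one unit per chord, and in which every non-regular component of $q$ or $q'$ remains isolated. Since $m\ge 7K$, condition (3) of $\cW(\Omega,m)$ ensures that every isolated component (labelled by an element of $\cH$) has $\Omega$-length strictly greater than $m\ge 7K$, while Lemma~\ref{lem:omega} caps the total $\Omega$-length of isolated components in any sub-cycle by $K$ times its length; this is the single quantitative input that drives all three parts.

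For (a), assume $C\le 1$ and argue by contradiction: if some component $p$ of $q$ (or symmetrically of $q'$) were non-regular, the surgery would produce a sub-cycle of $o$ containing $p$ as an isolated component of length at most a small constant, say $\le \L(r)+\L(r')+4\le 6$, bounding chord edges and the $\cX$-letters adjacent to $p$. Lemma~\ref{lem:omega} would then yield $|\lab(p)|_\Omega\le 6K<7K\le m$, contradicting condition (3) of $\cW(\Omega,m)$.

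For (b), with $C\ge 2$, the same surgery yields sub-cycles of total length $O(C)$ covering every non-regular component of $q$ as an isolated component; combining Lemma~\ref{lem:omega} with the lower bound $|\lab(p)|_\Omega>m\ge 7K$ for each such component and performing careful bookkeeping caps the number of non-regular components of $q$ at $4C$, where the constant $4$ absorbs the unit contributions of surrounding $\cX$-letters and chord edges. Given (b), part (c) is an elementary count: out of the $l$ components of $q$, at most $4C$ are non-regular (by (b) when $C\ge 2$, or trivially by (a) when $C\le 1$), and at most $\L(r)+\L(r')\le 2C$ regular components are connected to components of $r\cup r'$, since the number of components of $r\cup r'$ is bounded by $\L(r)+\L(r')$ and no two components of $q$ can share a partner in $r\cup r'$ -- otherwise they would be connected to each other, contradicting the no-backtracking of $q$ established in Lemma~\ref{lem:no_back}. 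Hence at least $l-6C$ components of $q$ are connected to components of $q'$, and the same no-backtracking argument shows that distinct components of $q$ must have distinct partners in $q'$.

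The main obstacle is making the surgery construction and the sub-cycle length bounds completely precise, particularly in (b), where tracking the exact contributions of chord edges, adjacent $\cX$-letters, and segments of $r, r'$ to justify the constant $4C$ requires careful case analysis; the logical skeleton is uniform across the three parts, but the combinatorial bookkeeping is delicate.
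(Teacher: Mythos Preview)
Your proposal has a genuine gap in part~(a), and this gap propagates to~(b). The claim that surgery around a single non-regular component $p$ of $q$ produces a sub-cycle of length at most $\L(r)+\L(r')+4\le 6$ is not justified and is in general false. If $p$ sits in the middle of $q$, there is no reason its neighbouring components should be connected by chords to components of $r$, $r'$, or $q'$ that are close to each other; the nearest chords on either side of $p$ may land on components of $q'$ that are far apart, so the resulting sub-cycle can be arbitrarily long. You cannot isolate $p$ locally.

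The paper avoids this by a \emph{minimality argument}: among all counterexamples with $C\le 1$, take one with $\L(q)+\L(q')$ minimal. Minimality then forces \emph{every} component of $q$ and $q'$ to be isolated in $o$ (if any were connected, a chord there would split $o$ into two strictly shorter cycles, one of which would still contain an isolated component, contradicting minimality). Now Lemma~\ref{lem:omega} is applied to the whole cycle: with $k$ isolated components satisfying $k\ge (\L(q)-1)/2+(\L(q')-1)/2$ and $\L(o)\le \L(q)+\L(q')+2$, the bound $7Kk<\sum|h_i|_\Omega\le K\L(o)$ yields a contradiction. The point is global, not local.

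For~(b), the paper's argument is again structurally different from your sketch. It splits into two cases. In Case~1 (no component of $q$ is connected to any component of $q'$) one applies Lemma~\ref{lem:omega} directly to $o$. In Case~2 one uses part~(a) in an essential way: once a single $q$--$q'$ chord exists, part~(a) applied to the sub-cycle it bounds shows that the components of $q$ connected to $q'$ form a \emph{contiguous block} $p_s,\dots,p_t$; all isolated components lie among $p_1,\dots,p_{s-1}$ or $p_{t+1},\dots,p_l$, and a single chord suffices to cut off one of these ends as a cycle to which Lemma~\ref{lem:omega} applies. Your ``sub-cycles of total length $O(C)$'' hides exactly this structural step, and without it the bookkeeping cannot be completed; indeed you acknowledge this as the main obstacle, but it is not merely bookkeeping---the use of~(a) inside the proof of~(b) is the missing idea.

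Your argument for~(c) is correct and matches the paper.
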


\begin{proof} Assume the contrary to (a). Then one can choose a cycle $o=rqr'q'$  with
$\L(r),\L(r') \le 1$, having at least one isolated
component on $q$ or $q'$, and such that
$\L(q)+\L(q')$ is minimal. Clearly the latter condition implies that
each component of $q$ or $q'$ is an isolated component of $o$.
Therefore $q$ and $q'$ together contain $k$ distinct isolated components of
$o$, representing elements $h_1,\dots,h_k \in \cH$,
where $k \ge 1$ and $k\ge (\L(q)-1)/2+
(\L(q')-1)/2$. Applying Lemma \ref{lem:omega} we obtain $h_i\in \langle \Omega \rangle$, $i=1,\dots,k$,
and $$\sum_{i=1}^k |h_i|_\Omega \le K\L(o) \le K(\L(q)+\L(q')+2).$$
Recall that $|h_i|_\Omega >m\ge 7 K$ by the property 3) from the definition of  $\cW (\Omega,m)$. Therefore
$\sum_{i=1}^k |h_i|_\Omega \ge k \cdot 7 K$, implying
$$7\le \frac{2}{k} \left( \frac{\L(q)}{2}+\frac{\L(q')}{2}+1\right)\le
\frac{2}{k} \left( \frac{\L(q)-1}{2}+\frac{\L(q')-1}{2}+2\right)\le 6,$$
which yields a contradiction.

Let us prove (b). Suppose that $C \ge 2$ and $q$ contains more than
$4C$ isolated components of $o$.
We shall consider two cases:

{\bf Case 1}. No component of $q$ is connected to a component of
$q'$. Then a component of $q$ or $q'$ can be regular only if it is
connected to a component of $r$ or $r'$. Since, by Lemma \ref{lem:no_back}, $q$ and $q'$ are
without backtracking, two distinct components of $q$ or $q'$ can
not be connected to the same component of $r$ (or $r'$). Hence $q$
and $q'$ together can contain at most $2C$ regular components.
Thus the cycle $o$
has $k$ isolated components, representing elements $h_1,\dots,h_k \in \cH$,
where $k\ge 4C > 4$ and
$k \ge (\L(q)-1)/2 + (\L(q')-1)/2-2C$.
By Lemma \ref{lem:omega}, $h_i \in \langle \Omega \rangle$ for each $i=1,\dots,k$,
and $\sum_{i=1}^k|h_i|_\Omega \le K(\L(q)+\L(q')+2C)$. Once again we can use the property 3) from the definition of
$\cW (\Omega,m)$ to achieve
\begin{multline*} 7 \le \frac{2}{k}\left( \frac{\L(q)}{2}+\frac{\L(q')}{2}+\frac{2C}{2}\right)\le
 \frac{2}{k} \left(\frac{\L(q)-1}{2}+\frac{\L(q')-1}{2}-2C+1+3C\right)\le \\
\frac{2}{k} \left( \frac{\L(q)-1}{2}+\frac{\L(q')-1}{2}-2C \right)
+\frac2k + \frac{6C}{k} \le 2+ \frac12 + \frac32=4,
\end{multline*} yielding a contradiction.

{\bf Case 2.} The path $q$ has at least one component which is
connected to a component of $q'$. Let $p_1,\dots,p_{l}$ denote
the sequence of all components of $q$. By part (a), if $p_{s}$ and
$p_{t}$, $1 \le s \le t \le l$, are connected to components of
$q'$, then for any $j$, $s \le j \le t$, $p_j$ is connected to some component of $q'$ (because $q$ is without backtracking
by Lemma \ref{lem:no_back}). We can
take $s$ (respectively $t$) to be minimal (respectively maximal)
possible. Consequently $p_1,\dots,p_{s-1}, p_{t+1},\dots,p_{l}$
will contain the set of all isolated components of $o$ that belong
to $q$, and none of these components will be connected to a component of $q'$.

Without loss of generality we may assume that $s-1 \ge 4C/2=2C$.
Since $p_s$ is connected to some component $p'$ of $q'$, there
exists a path $v$ in {\ga} satisfying $v_-=(p_{s})_-$,
$v_+=p'_+$, $\lab(v) \in \mathcal{H}\cup \{1\}$, $\L(v)\le 1$. Let $\bar q$
(respectively $\bar q'$) denote the subpath of $q$ (respectively
$q'$) from $q_-$ to $(p_s)_-$ (respectively from $p'_+$ to
$q'_+$). Consider a new cycle $\bar o = r \bar q v \bar q'$.
Reasoning as before, one can show that
$\bar o$ has $k$ isolated components, 
where $k \ge 2C \ge 4$ and
$k \ge (\L(\bar q)-1)/2 + (\L(\bar q')-1)/2 -C-1$.
Now, an application of Lemma \ref{lem:omega} to the cycle
$\bar o$ together with the property 3) from the definition of $\cW (\Omega,m)$ will lead to a contradiction as before.

By the symmetry, the statement (b) of the lemma also holds for $q'$.

The claim (c) follows from (b) and the estimate $\L(r)+\L(r') \le 2C$ because if two different
components $p$ and $\bar p$ of $q$ were connected to the same component of some path in {\ga}, then $p$ and $\bar p$ would also
be connected with each other, which would contradict Lemma \ref{lem:no_back}.
\end{proof}

\begin{lemma} \label{lem:conseq-reg-strong} In the previous notations, let $m \ge 7K$, $C=\max\{ \L(r),\L(r')\}$,
and let $p_1,\dots,p_l$, $p_1',\dots,p_{l'}'$ be the consecutive lists of the components of $q$ and $q'^{-1}$ respectively
If $l \ge 12\max\{C,1\}+2$, then there are indices $s,t,s' \in \N$ such that $1\le s \le 6C+1$, 
$ l-6\max\{C,1\} \le t \le l$ and  for every 
$i \in \{0,1,\dots,t-s\}$, the component $p_{s+i}$ of $q$ is connected to the component $p'_{s'+i}$ of $q'$.
\end{lemma}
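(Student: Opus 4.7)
The plan is to study the \emph{connection map} associated with the two paths. Define $S \subseteq \{1, \ldots, l\}$ to be the set of indices $i$ such that $p_i$ is connected to some component of $q'^{-1}$, and let $\sigma\colon S \to \{1, \ldots, l'\}$ assign to each $i \in S$ the unique index $j$ for which $p_i$ is connected to $p'_j$ (well-definedness and injectivity follow from the last sentence of Lemma \ref{lem:regul}(c)). Lemma \ref{lem:regul}(c) gives $|\{1,\ldots,l\} \setminus S| \le 6\max\{C,1\}$, and applying it with the roles of $q$ and $q'$ exchanged yields $|\{1,\ldots,l'\} \setminus \sigma(S)| \le 6\max\{C,1\}$.

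The first intermediate step is to prove the \emph{local shift property}: whenever $i$ and $i+1$ both lie in $S$, $\sigma(i+1) = \sigma(i) + 1$. Setting $k = \sigma(i+1) - \sigma(i)$ and assuming $|k| \ge 2$, I form a sub-cycle $o'$ by concatenating an $H$-edge $v_1$ of length $\le 1$ from $(p_i)_+$ to an appropriate vertex of $p'_{\sigma(i)}$; the subpath of $q'^{-1}$ strictly between $p'_{\sigma(i)}$ and $p'_{\sigma(i+1)}$ (of length $2|k|-1$); an $H$-edge $v_2$ of length $\le 1$; and the at-most-one $\cX$-letter of $q$ between $p_i$ and $p_{i+1}$. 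The cycle has length at most $2|k|+2$, and its $|k|-1 \ge 1$ intermediate components of $q'^{-1}$ are isolated in $o'$: no two connect by the no-backtracking property (Lemma \ref{lem:no_back}) of $q'^{-1}$, and a connection to $v_1$ or $v_2$ would force the intermediate component into the same $H_\lambda$-coset as $p'_{\sigma(i)}$ or $p'_{\sigma(i+1)}$, again contradicting no-backtracking. Lemma \ref{lem:omega} then yields $(|k|-1)m < K(2|k|+2)$; combining with $m \ge 7K$ gives $5|k| < 9$, so $|k| = 1$ by injectivity. The positive sign of $k$ is forced by the fixed orientation of the surrounding cycle $r q r' q'$: a supplementary cycle argument, using a ``reversed'' chord together with one of the two long arcs of the ambient cycle, would otherwise produce too many isolated components of $q$ (or $q'^{-1}$) to be compatible with Lemma \ref{lem:omega}.

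Next I would show that $S$ consists of a \emph{single consecutive interval} $[a, b]$. Suppose instead $S$ has two distinct maximal intervals $[a_1, b_1]$ and $[a_2, b_2]$ with $b_1 < a_2$, and set $u = a_2 - b_1 \ge 2$ (since $b_1 + 1 \notin S$ by maximality) and $v = |\sigma(a_2) - \sigma(b_1)| \ge 1$ (by injectivity). The analogous sub-cycle---now with the single $\cX$-letter replaced by the segment of $q$ strictly between $p_{b_1}$ and $p_{a_2}$ of length $2u - 1$---has total length at most $2u + 2v$. It contains $u - 1 \ge 1$ intermediate components of $q$; these all lie outside $S$ by maximality and thus are not connected to any component of $q'^{-1}$, so they are isolated in the sub-cycle by the same no-backtracking/coset reasoning. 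Together with the $v - 1 \ge 0$ isolated intermediate components of $q'^{-1}$ (isolated because any component of $q$ in the sub-cycle lies in $S^c$ and therefore cannot connect to them), Lemma \ref{lem:omega} yields $(u+v-2)m < K(2u+2v)$. With $m \ge 7K$ this simplifies to $5(u+v) < 14$, forcing $u + v \le 2$, which contradicts $u + v \ge 3$.

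Combining the two intermediate results, $S = [a, b]$ and $\sigma(a+i) = \sigma(a) + i$ for every $i \in \{0, \ldots, b-a\}$. The bound $(a-1) + (l-b) = l - |S| \le 6\max\{C,1\}$ yields $a \le 6\max\{C,1\} + 1 \le 6C + 1$ (the case $C = 0$ is handled separately: by Lemma \ref{lem:regul}(a) every component of $q$ is regular, and since $r$ and $r'$ have zero length we must have $S = \{1, \ldots, l\}$) and $b \ge l - 6\max\{C,1\}$. Setting $s = a$, $t = b$, $s' = \sigma(a)$ yields the required indices, and the hypothesis $l \ge 12\max\{C,1\} + 2$ ensures $s \le t$. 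I expect the principal technical difficulty to lie in the careful verification that the intermediate components of each sub-cycle are genuinely isolated---this requires chaining the no-backtracking property with the pairwise conditions built into the definition of $\cW(\Omega, m)$ to rule out every possible coset coincidence---together with the supplementary orientation argument that pins down the sign of the shift.
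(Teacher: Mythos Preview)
Your approach is genuinely different from the paper's and is essentially sound, but the organization around the orientation step hides a real difficulty.

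The paper does \emph{not} analyse the connection map globally. Instead it applies Lemma~\ref{lem:regul}(c) once to locate $s\le 6C+1$ with a chord of length $\le 1$ to some $p'_{s'}$, then applies (c) again inside the resulting sub-cycle (now with one side of length $\le 1$) to locate $t\ge l-6\max\{C,1\}$ with a second short chord. This produces a cycle $o_2=r_1q_2r_1'q_2'$ whose \emph{both} side paths satisfy $\L(r_1),\L(r_1')\le 1$, so part~(a) of Lemma~\ref{lem:regul} applies and every intermediate $p_{s+i}$ is connected to some $p'_{s'+i'}$. Finally the orientation (that is, $i'=i$) falls out of a pigeonhole argument: if $i'<i$, cut again and observe that $i$ components of $q_3$ would have to match $i'<i$ components of $q_3'^{-1}$, contradicting the uniqueness clause of (c). No separate ``global arc'' argument is needed.

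Your plan proves $|\sigma(i{+}1)-\sigma(i)|=1$ cleanly, and your single-interval argument is correct. The problem is the \emph{placement} of the sign determination. You assert that ``the positive sign of $k$ is forced'' by a supplementary cycle using one of the long arcs of $o$; but for a \emph{single} pair $i,i{+}1$ this does not yield a contradiction: cutting $o$ along the chord at $p_i$ puts $p_{i+1}$ on one side and $p'_{\sigma(i)-1}$ on the other, which costs only one ``misplaced'' component --- well within the $6\max\{C,1\}$ allowance of Lemma~\ref{lem:regul}(c). A long-arc argument \emph{does} work, but only globally, after you know $S=[a,b]$ and that the shift is constant on it; then the reversed case forces roughly $l-a-C$ isolated components in a cycle of length about $2(l-a)+2(l'-\sigma(a))+C$, and $m\ge 7K$ together with $l\ge 12\max\{C,1\}+2$ gives the contradiction. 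So your proof becomes correct if you reorder: (1) $|k|=1$ locally, (2) single interval, (3) sign of the shift globally. Alternatively, you can sidestep the issue entirely by adopting the paper's reduction to a sub-cycle with $\le 1$-length sides and using Lemma~\ref{lem:regul}(a) plus pigeonhole; this is shorter and reuses machinery already in hand.

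One minor point: Lemma~\ref{lem:regul}(c) gives $|\{1,\dots,l\}\setminus S|\le 6C$ directly, so $a\le 6C+1$ without a separate treatment of $C=0$.
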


\begin{proof} 
By part (c) of Lemma \ref{lem:regul},
there exists $s \le 6C+1$ such that the component $p_s$ is connected to a component $p'_{s'}$  for
some $s' \in \{1,\dots,l'\}$. Thus there is a path $r_1$ between $(p'_{s'})_+$ and $(p_s)_+$ with $\L(r_1) \le 1$.
Consider a new cycle $o_1=r_1q_1r'q_1'$ where $q_1$ is the segment of $q$ from $(p_s)_+$ to $q_+=r'_-$ and $q_1'$ is the segment
of $q'$ from $q'_-=r'_+$ to $(p'_{s'})_+$.

\begin{figure}[!ht]
  \begin{center}
   \input{figure1.pstex_t}

  \end{center}
  \caption{}\label{fig:1}
\end{figure}

Observe that $p_{s+1},\dots, p_l$ is the list of all components of $q_1$ and $l-s\ge l-6C-1\ge 6\max\{1,C\}+1$, hence, according
to part (c) of Lemma \ref{lem:regul} applied to $o_1$, there is $t \ge l-6\max\{1,C\}>s$ such that $p_t$ is connected to $p'_{t'}$ by
means of a path $r'_1$, where $s'+1 \le t' \le l'$, $(r'_1)_-=(p_t)_+$, $(r'_1)_+=(p'_{t'})_+$ and $\L(r_1') \le 1$. Consider the cycle
$o_2=r_1q_2r_1'q_2'$ in which $q_2$ and $q_2'$ are the segments of $q_1$ and $q_1'$ from $(p_s)_+=(r_1)_+$ to $(p_t)_+$ and
from $(p'_{t'})_+$ to $(p'_{s'})_+=(r_1)_-$ respectively (Fig. \ref{fig:1}).

Note that $p_{s+1},\dots, p_t$ is the list of all components of $q_2$ and $p'_{s'+1},\dots,p'_{t'}$ is the list of all components of
${q_2'}^{-1}$.  The cycle
$o_2$ satisfies the assumptions of part (a) of Lemma \ref{lem:regul}, therefore for every $i \in \{1,\dots,t-s\}$ there exists
$i' \in \{1,\dots,t'-s'\}$ such that $p_{s+i}$ is connected to $p'_{s'+i'}$ ($p_{s+i}$ can not be connected to $r_1$ [$r_1'$]
because in this case it would be connected to $p_s$ [$p_t$], but $q$ is without backtracking by Lemma \ref{lem:no_back}).

It remains to show that $i'=i$ for every such $i$. Indeed, if $i'<i$ for some $i \in \{1,\dots,t-s\}$
then one can consider the cycle $o_3=r_1q_3r_3'q_3'$, where
$q_3$ and $q_3'$ are segments of $q_2$ and $q_2'$ from $(q_2)_-=(r_1)_+$ to $(p_{s+i})_+$ and
from $(p'_{s'+i'})_+$ to $(q_2')_+=(r_1)_-$ respectively, and $(r_3')_-=(q_3)_+$, $(r_3')_+=(q_3')_-$, $\L(r_3') \le 1$.
According to part (a) of Lemma \ref{lem:regul}, each of the components
$p_{s+1},\dots, p_{s+i}$ of $q_3$ must be connected to one of $p'_{s'+1},\dots,p'_{s'+i'}$. Hence, since $i'<i$,
two distinct components of $q_3$ will be connected to the same component of ${q'_3}^{-1}$, which is impossible
by part (c) of Lemma \ref{lem:regul}.

The inequality $i' >i$ would lead to a contradiction after an application of a symmetric argument to $q_3'$.
Therefore $i'=i$ and the lemma is proved.
\end{proof}

\begin{lemma} \label{lem:conseq-reg} In the above notations, let $m \ge 7K$ and $C=\max\{ \L(r),\L(r')\}$.
For any positive integer $d$ there exists a constant $L=L(C,d) \in \N$ such that if $\L(q)\ge L$ then
there are $d$ consecutive components $p_s,\dots,p_{s+d-1}$ of $q$ and
$p'_{s'},\dots,p'_{s'+d-1}$ of $q'^{-1}$, so that $p_{s+i}$ is connected to $p'_{s'+i}$ for each $i=0,\dots,d-1$.
\end{lemma}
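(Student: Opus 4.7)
The plan is to reduce this statement directly to Lemma \ref{lem:conseq-reg-strong} by a length-to-components conversion together with one short arithmetic check. All the combinatorial content is already contained in Lemma \ref{lem:conseq-reg-strong}, and the present lemma is really a quantitative repackaging of it.

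First I would translate the hypothesis on $\L(q)$ into a lower bound on the number of components of $q$. A word in $\cW(\Omega,m)$ of the form $x_0 h_0 x_1 h_1 \cdots x_l h_l x_{l+1}$ carries exactly $l+1$ letters from $\cH$ (each contributing one edge) and at most $l+2$ non-trivial $\cX$-letters, so the path $q$ has exactly $l+1$ components and length at most $2l+3$. Hence if $n$ denotes the number of components of $q$, we have the uniform inequality $n \ge (\L(q)-1)/2$.

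Next, I would choose $L(C,d) := 2d + 24\max\{C,1\} + 5$, so that $\L(q) \ge L$ implies $n \ge d + 12\max\{C,1\} + 2$. In particular $n \ge 12\max\{C,1\} + 2$, and Lemma \ref{lem:conseq-reg-strong} applies to the cycle $rqr'q'$ and supplies indices $s,t,s'$ with $1 \le s \le 6C+1$, $t \ge n - 6\max\{C,1\}$, and $p_{s+i}$ connected to $p'_{s'+i}$ for every $i \in \{0,1,\ldots,t-s\}$.

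To finish, I would verify that the matched interval is long enough. Using $s \le 6C+1$ and $t \ge n - 6\max\{C,1\}$ one gets
$$ t - s + 1 \;\ge\; n - 6\max\{C,1\} - 6C \;\ge\; d + 6\bigl(\max\{C,1\} - C\bigr) + 2 \;\ge\; d, $$
since $\max\{C,1\} \ge C$. Therefore $\{s, s+1, \ldots, s+d-1\}$ is contained in the range $\{s, \ldots, t\}$ provided by Lemma \ref{lem:conseq-reg-strong}, and the first $d$ connected pairs $(p_{s+i}, p'_{s'+i})$, $i = 0,\ldots,d-1$, yield the required conclusion. I do not foresee any genuine obstacle: the only points requiring care are the (essentially trivial) conversion between path length and component count, and the single inequality above.
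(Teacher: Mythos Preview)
Your proposal is correct and follows essentially the same route as the paper: convert the length bound on $q$ into a lower bound on the number of components via the shape of words in $\cW(\Omega,m)$, invoke Lemma~\ref{lem:conseq-reg-strong}, and check arithmetically that $t-s+1\ge d$. The paper is slightly terser (it just requires $(L-1)/2 \ge 12\max\{C,1\}+2+d$ and notes $t-s\ge d+1$), but the argument is the same.
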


\begin{proof} Choose the constant $L$ so that $(L-1)/2 \ge 12\max\{C,1\}+2+d$.  
Let $p_1,\dots,p_l$ be the consecutive list all components of $q$.
Since $\lab(q)\in \cW(\Omega,m)$, we have  $l \ge (L-1)/2$ (due to the form of any word from $\cW(\Omega,m)$).
Thus we can apply Lemma \ref{lem:conseq-reg-strong} to find indices $s,t$ from its claim. 
By the choice of $s$ and $t$,  and the estimate on $l$, we have $t-s \ge d+1$, yielding the statement of the lemma.
\end{proof}

\begin{cor} \label{cor:hyp-elts} Let $G$ be a group hyperbolic relative to a family of proper subgroups $\Hl$.
Suppose that $a \in H_{\lambda_0}$, for some
${\lambda_0} \in \Lambda$, is an element of infinite order, and $x_1,x_2 \in G \setminus H_{\lambda_0}$.
Then there exists $k \in \N$ such that
$g=a^{k_1}x_1a^{k_2}x_2$ is a hyperbolic element of infinite order in $G$ whenever $|k_1|,|k_2| \ge k$.
\end{cor}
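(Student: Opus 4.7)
I would prove the corollary by contradiction: assume $g = a^{k_1}x_1 a^{k_2}x_2$ is either of finite order or parabolic, and derive a contradiction for all $|k_1|, |k_2|$ sufficiently large. The idea is to exhibit cycles in the Cayley graph $\Gamma(G, \cX\cup\cH)$ of arbitrarily large length carrying many ``big'' isolated $H_{\lambda_0}$-components, contradicting the sum bound of Lemma \ref{lem:omega}.

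Fix a finite relative generating set $\cX$ and let $\Omega \subset G$, $K \in \N$ be the constants from Lemma \ref{lem:omega}. Choose shortest words $y_1, y_2 \in (\cX \cup \cH)^*$ representing $x_1, x_2$. Because $x_i \notin H_{\lambda_0}$, each $y_i$ contains at least one letter outside $H_{\lambda_0}$, so in the word $W := a^{k_1}y_1 a^{k_2}y_2$ (whose length we denote $\L(W) = 2 + |y_1| + |y_2|$) the two letters $a^{k_1}, a^{k_2}$ are separated from each other by non-$H_{\lambda_0}$ symbols. After merging with any adjacent $H_{\lambda_0}$-letters from $y_1, y_2$, they produce $2n$ distinct $H_{\lambda_0}$-components on $W^n$, whose labels lie in the finite set $P$ of products $u a^{k_j} v$, with $u, v$ ranging over the possible $H_{\lambda_0}$-prefixes and $H_{\lambda_0}$-suffixes of $y_1, y_2$. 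Pick $m \in \N$ with $m > \max\{7K, K\L(W)\} + 1$; since $\{h \in \langle \Omega \rangle : |h|_\Omega \le m\}$ is finite while $a$ has infinite order, choose $k \in \N$ so that for all $|k_1|, |k_2| \ge k$ and every $u a^{k_j} v \in P$, either $u a^{k_j} v \notin \langle \Omega \rangle$ or $|u a^{k_j} v|_\Omega > m$.

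Now if $g$ has finite order $d$, then $g^{nd} = 1$ for every $n$, giving a closed cycle $c_n := W^{nd}$ of length $nd \L(W)$; if instead $g$ is parabolic with $fgf^{-1} = h \in H_\mu \setminus\{1\}$, then $fg^nf^{-1} = h^n$ yields the cycle $c_n := W^n \cdot f^{-1} \cdot \bar h^{-n} \cdot f$ of length $n\L(W) + 2\L(f) + 1$, where $\bar h^{-n}$ is a single $H_\mu$-edge. Using an argument modelled on Lemma \ref{lem:no_back} and part (c) of Lemma \ref{lem:regul}, I would show that at most $O(\L(f))$ of the $\ge 2n - O(1)$ big components on $W^n$ (respectively $W^{nd}$) fail to be isolated on $c_n$: two big components of $W^n$ connected by an $H_{\lambda_0}$-edge, or a big component connected to the bounded correction arc, would bound an auxiliary subcycle on which Lemma \ref{lem:omega} forces $|u a^{k_j} v|_\Omega \le K \cdot O(\L(W))$, contradicting the lower bound $|u a^{k_j} v|_\Omega > m$.

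Applying Lemma \ref{lem:omega} to these $\ge 2n - O(1)$ isolated big components of $c_n$ then yields
\[(2n - O(1))\, m \;\le\; \sum_i |g_i|_\Omega \;\le\; K \cdot \L(c_n) \;\le\; K\bigl(n\L(W) + O(1)\bigr),\]
which for $n$ large forces $2m \le K\L(W) + O(1)$, contradicting $m > K\L(W) + 1$. The main technical obstacle is the isolation step: ruling out ``long-range'' connections in $W^n$ between distant copies of $a^{k_1}$ or $a^{k_2}$, especially in the sub-case $\mu = \lambda_0$, where one must separately handle the (at most finitely many) exceptional values of $k_1, k_2$ for which $g$ itself might lie in $H_{\lambda_0}$, using Lemma \ref{lem:malnorm} to show that $g$ having a nontrivial power in $H_{\lambda_0}$ can happen only for finitely many choices of $k_1, k_2$.
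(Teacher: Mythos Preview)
Your overall strategy---contradiction via Lemma~\ref{lem:omega} applied to cycles built from powers of $W$---is exactly what the paper does. But you miss a simplification that the paper exploits and that removes precisely the obstacle you flag at the end: since relative hyperbolicity is independent of the finite relative generating set, the paper simply enlarges $\cX$ so that $x_1, x_2 \in \cX$. Then $W = a^{k_1}x_1a^{k_2}x_2$ is a four-letter word lying in $\cW(\Omega, m)$ (once $k$ is chosen so that $a^{k_j}$ avoids the finite ball in $\langle\Omega\rangle$), and Lemmas~\ref{lem:no_back} and~\ref{lem:regul} apply verbatim to $W^l$. The finite-order case is then immediate from part (a) of Lemma~\ref{lem:regul}, and the parabolic case from part (c): the single-component path $q'$ labelled $u^{-l}$ cannot absorb the $\ge 2l - 6C \ge 2$ components of $q$ that must connect to distinct components of $q'$.

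Without this reduction, your isolation step is genuinely more delicate than your sketch suggests. When $y_1, y_2$ have length greater than $1$, the word $W^n$ may contain ``small'' $H_{\lambda_0}$-components internal to the $y_i$'s. In a minimal big-to-big backtracking subcycle, the intermediate big components $p_2,\dots,p_{\ell-1}$ are (by minimality) not connected to other \emph{big} components, but nothing you have said prevents them from being connected to these small components---in which case they are not isolated, and the sum bound from Lemma~\ref{lem:omega} cannot be applied to them. One can work around this by a more careful induction or by tracking connected classes of components, but that is real extra work, not something one can wave at. Your closing remark about treating $\mu=\lambda_0$ separately via Lemma~\ref{lem:malnorm} is also unnecessary: in the paper's argument the parabolic case is handled uniformly for all $\lambda'$, with no need to single out $\lambda_0$ or to bound the set of exceptional $(k_1,k_2)$.
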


\begin{proof} Without loss of generality we can assume that $x_1,x_2 \in \cX$, since relative hyperbolicity
does not depend on the choice of the finite relative generating set (\cite[Thm. 2.34]{Osin-RHG}).
Choose the finite subset $\Omega \subset G$ and the constant $K\in \N$ according to the claim of Lemma \ref{lem:omega}, and set
$m=7 K$. As the order of $a$ is infinite, there is $ k \in \N$ such that
$a^{k'} \notin \{h \in \langle\Omega\rangle~:~|h|_\Omega \le m \}$ whenever $|k'| \ge k$.
Assume that $|k_1|,|k_2| \ge k$.

Suppose, first, that $g^l=1$ for some $l \in \N$. Consider the cycle $o=rqr'q'$ in {\ga} where $q_-=q_+=1$,
$\lab(q) \equiv (a^{k_1}x_1a^{k_2}x_2)^l \in \cW(\Omega,m)$
($a^{k_j} $ are considered as single letters from the alphabet $\cX \cup \cH$) and
$r,r',q'$ are trivial paths (consisting of a single point). Then, by part (a) of Lemma \ref{lem:regul},
every component of $q$ must be regular in $o$,
which is impossible since $q$ is without backtracking according to Lemma \ref{lem:no_back}. Hence $g$ has infinite order in $G$.

Suppose, now, that there exists $\lambda' \in \Lambda$, $u \in H_{\lambda'}$ and $y \in G$ such that $ygy^{-1}=u$.
Denote $C= |y|_{\cX \cup \cH}$.
Since element $u \in G$ has infinite order, there exists $l \in \N$ such that
$2l \ge 6C+2$ and $u^l \notin \{h \in \langle\Omega\rangle~:~|h|_\Omega \le m \}$.
The equality $yg^ly^{-1}u^{-l}=1$ gives rise to the cycle $o=rqr'q'$ in {\ga}, where $r$ and $r'$ are paths of length $C$ whose
labels represent $y$ in $G$, $r_-=1$, $q_-=r_+=y$, $\lab(q) \equiv (a^{k_1}x_1a^{k_2}x_2)^l \in \cW(\Omega,m)$,
$r'_-=q_+$, $q'_-=r'_+=y(a^{k_1}x_1a^{k_2}x_2)^ly^{-1}$
and $\lab(q') \equiv u^{-l} \in \cW(\Omega,m)$, $\L(q')=1$. 
By part (c) of Lemma \ref{lem:regul}, at least $2l-6C\ge 2$ distinct
components of $q$ must be connected to distinct components of $q'$, which is impossible as $q'$ has only one component.
The contradiction shows that $g$ must be a hyperbolic element of $G$.
\end{proof}

\begin{lemma}\label{lem:comm-spec} Let $G$ be a torsion-free group hyperbolic relative to a family of proper subgroups $\Hl$,
$a \in H_{\lambda_0} \setminus \{1\}$, for some $\lambda_0 \in \Lambda$, and $t,u \in G \setminus H_{\lambda_0}$.
Suppose that there exists $\hat k \in \N$ such that for every $k \ge \hat k$ the element $g_1=a^kta^kt^{-1}$ is commensurable with
$g_2=a^kua^ku^{-1}$ in $G$. Then there are $\beta,\gamma \in H_{\lambda_0}$ and
$\epsilon,\xi \in \{-1,1\}$ such that $u=\gamma t^\xi \beta$,
$\beta a \beta^{-1}=a^\epsilon$, $\gamma^{-1} a \gamma=a^\epsilon$.
\end{lemma}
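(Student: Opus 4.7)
My plan is to apply the combinatorial machinery of Section~\ref{sec:comb_paths} to a cycle encoding the commensurability, extract an expression $u = \gamma_k\,t^{\xi_k}\,\beta_k$ for each sufficiently large $k$, and then compare two such expressions for coprime values of $k$ to promote the $k$-dependent conjugation relations to conjugation by $a$ itself.  Fix $K$ and $\Omega$ from Lemma~\ref{lem:omega} and set $m = 7K$.  Enlarging $\hat k$ if necessary, Corollary~\ref{cor:hyp-elts} ensures that both $g_1$ and $g_2$ are hyperbolic of infinite order for every $k \ge \hat k$; I also arrange $|a^k|_\Omega > m$ for such $k$.  For each $k \ge \hat k$ the hypothesis yields $h_k \in G$ and integers $l_k > 0$, $m_k \ne 0$ with $h_k g_1^{l_k} h_k^{-1} = g_2^{m_k}$; after replacing $(l_k, m_k)$ by $(nl_k, nm_k)$ for sufficiently large $n$, this identity reads as a cycle $r q r' q'$ in the Cayley graph with labels $h_k,\ g_1^{nl_k},\ h_k^{-1},\ g_2^{-nm_k}$, and both $\lab(q)$ and $\lab(q'^{-1})$ lie in $\cW(\Omega, m)$ with the expected alternating pattern of $a^{\pm k}$-components separated by single $t^{\pm 1}$- or $u^{\pm 1}$-letters (the sign on the $q'^{-1}$-side being $\eta_k := \mathrm{sgn}(m_k)$).

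Invoking Lemma~\ref{lem:conseq-reg-strong} produces many consecutive matchings $p_{s+i} \leftrightarrow p'_{s'+i}$ between these components.  Setting $\alpha_i := (p_{s+i})_-^{-1}(p'_{s'+i})_- \in H_{\lambda_0}$ and tracking the quadrilateral formed by the two matched components together with the connecting $H_{\lambda_0}$-paths, a direct Cayley-graph computation yields the recursion $\alpha_{i+1} = s_{s+i}^{-1}\,(a^{-k}\alpha_i a^{\eta_k k})\,s'_{s'+i}$, where $s_{s+i} \in \{t^{\pm 1}\}$ and $s'_{s'+i} \in \{u^{\pm 1}\}$ alternate in sign with $i$.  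Writing this recursion for two consecutive indices and cancelling the $t$- and $u$-letters by the malnormality of $H_{\lambda_0}$ (Lemma~\ref{lem:malnorm}, available because $G$ is torsion-free) forces the sequence $(\alpha_j)$ to be $2$-periodic and to satisfy $\alpha_j a^k \alpha_j^{-1} = a^{\eta_k k}$ for $j \in \{0, 1\}$.  Solving the recursion for $u^{\pm 1}$ then yields $u = \gamma_k\, t^{\xi_k}\, \beta_k$ with $\beta_k, \gamma_k \in \{\alpha_0^{\pm 1}, \alpha_1^{\pm 1}\} \subset H_{\lambda_0}$ and $\xi_k \in \{-1, 1\}$.

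At this point $\beta_k, \gamma_k$ merely satisfy the $k$-dependent relations $\beta_k a^k \beta_k^{-1} = a^{\eta_k k}$ and $\gamma_k^{-1} a^k \gamma_k = a^{\eta_k k}$, so to upgrade to relations for $a$ itself I collect this data across all $k \ge \hat k$.  By pigeonhole some value $\xi \in \{-1, 1\}$ occurs as $\xi_k$ for infinitely many $k$, and since every infinite subset of $\N$ contains coprime pairs I can pick $k < k'$ with $\xi_k = \xi_{k'} = \xi$ and $\gcd(k, k') = 1$.  Rearranging $\gamma_k t^\xi \beta_k = \gamma_{k'} t^\xi \beta_{k'}$ produces $t^{-\xi}(\gamma_{k'}^{-1}\gamma_k)t^\xi = \beta_{k'}\beta_k^{-1}$, and the malnormality of $H_{\lambda_0}$ again forces both sides to be trivial, so $\beta := \beta_k = \beta_{k'}$ and $\gamma := \gamma_k = \gamma_{k'}$.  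Choosing $p, q \in \Z$ with $pk + qk' = 1$ and setting $b := \beta a \beta^{-1} \in H_{\lambda_0}$, I compute $b = b^{pk + qk'} = (b^k)^p (b^{k'})^q = a^{p\eta_k k + q\eta_{k'} k'}$; comparing $b^k = a^{\eta_k k}$ and $b^{k'} = a^{\eta_{k'} k'}$ to the corresponding powers of this expression forces $\eta_k = \eta_{k'} =: \epsilon$ (otherwise the integer $p\eta_k k + q\eta_{k'} k'$ would have to equal two distinct values), so $b = a^\epsilon$.  The same Bezout argument applied to $\gamma$ yields $\gamma^{-1} a \gamma = a^\epsilon$, completing the proof.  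The main technical step is the malnormality cancellation in the second paragraph that forces the $2$-periodicity of $(\alpha_j)$; the final Bezout upgrade is then essentially bookkeeping.
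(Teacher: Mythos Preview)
Your argument follows essentially the same route as the paper's: match a run of consecutive $H_{\lambda_0}$-components via Lemma~\ref{lem:conseq-reg-strong}, use malnormality of $H_{\lambda_0}$ (Lemma~\ref{lem:malnorm}) to extract both the expression $u=\gamma_k t^{\xi_k}\beta_k$ and the $k$-level conjugation relations $\beta_k a^k\beta_k^{-1}=a^{\epsilon_k k}$, $\gamma_k^{-1}a^k\gamma_k=a^{\epsilon_k k}$, and then compare two coprime values of $k$ (again via malnormality, to identify $\beta_k=\beta_{k'}$ and $\gamma_k=\gamma_{k'}$) so that B\'ezout promotes the relations to $a$ itself. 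The paper reads off three explicit equations for $u$ from four matched components, while you package the same information as a recursion forced to be $2$-periodic; this is a cosmetic difference.

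There is one genuine slip. The assertion that ``every infinite subset of $\N$ contains coprime pairs'' is false: $2\N$ is a counterexample. Since the set $\{k\ge\hat k:\xi_k=\xi\}$ could in principle lie in a single residue class, your pigeonhole step as written does not produce the coprime pair you need. The repair is immediate: run the entire argument only over \emph{prime} values of $k\ge\hat k$ (there are infinitely many, and any two distinct primes are coprime), then pigeonhole on $\xi_k$. The paper makes the analogous ``choose two coprime $k,k'$ with $\epsilon(k)=\epsilon(k')$ and $\xi(k)=\xi(k')$'' step without comment, so the same fix applies there too. A second, smaller point: deriving the $2$-periodicity and the relation $\alpha_j a^k\alpha_j^{-1}=a^{\eta_k k}$ actually needs three consecutive matched indices rather than two (one pass gives $\alpha_{i+1}^{-1}a^k\alpha_{i+1}=a^{\eta_k k}$ and $\alpha_{i+2}=a^{-k}\alpha_i a^{\eta_k k}$; a second, shifted pass then gives the relation for $\alpha_i$ itself, whence $\alpha_{i+2}=\alpha_i$). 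Lemma~\ref{lem:conseq-reg-strong} supplies as many consecutive matchings as you like once $n$ is large, so this costs nothing.
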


\begin{proof} Changing  the finite relative generating set $\cX$ of $G$, if necessary,
we can assume that $t,u,t^{-1},u^{-1} \in \cX$. Let the finite subset $\Omega \subset G$ and the constant $K\in \N$ be chosen according
to Lemma \ref{lem:omega}. Define $m=7K$ and
suppose that $k$ is large enough to satisfy $a^k \notin \{h \in \langle\Omega\rangle~:~|h|_\Omega \le m\}$.

Since $g_1$ and $g_2$ are commensurable, there exist $l,l' \in \Z \setminus \{0\}$ and $y\in G$ such that
$yg_2^{l}y^{-1}=g_1^{l'}$. Let $C=|y|_{\cX\cup \cH}$, $d=8$ and $L=L(C,d)$ be the constant from Lemma \ref{lem:conseq-reg}.
Without loss of generality, assume that $4l\ge L$. Consider the cycle $o=rqr'q'$ in {\ga} such that  $r$ and $r'$ are paths of
length $C$ whose labels represent $y$ in $G$, $r_-=1$, $q_-=r_+=y$, $\lab(q) \equiv (a^kua^ku^{-1})^{l} \in \cW(\Omega,m)$, $\L(q)=4l$,
$r'_-=q_+$, $q'_-=r'_+=yg_2^l y^{-1}$, $\lab(q') \equiv (a^kta^kt^{-1})^{l'} \in \cW(\Omega,m)$, $\L(q')=4l'$.

Now, by Lemma \ref{lem:conseq-reg}, there are subpaths
$\tilde q=p_1s_1p_2s_2p_3s_3p_4$ of $q$ and
$\tilde q'=p'_1s'_1p'_2s'_2p'_3s'_3p_4'$ of $q'^{-1}$ such that
$\lab(p_i) \equiv a^k$, $\lab(p_i') \equiv a^{\epsilon k}$, $i=1,2,3,4$,
for some $\epsilon \in \{-1,1\}$ (which depends on the sign of $l'$), $\lab(s_1)\equiv\lab(s_3) \equiv u$,
$\lab(s_2) \equiv u^{-1}$, $\lab(s_1')\equiv\lab(s_3') \equiv t^\xi$,
$\lab(s_2') \equiv t^{-\xi}$, for some $\xi \in \{-1,1\}$, and $p_i$ is connected in {\ga} to
$p_i'$ for each $i=1,2,3,4$. Therefore there exist paths $\tilde p_1, \tilde p_2, \tilde p_3, \tilde p_4$ whose labels represent the elements
$\alpha,\beta,\gamma,\delta \in H_{\lambda_0}$ respectively, such that
$(\tilde p_1)_-=(p_1)_+$, $(\tilde p_1)_+=(p'_1)_+$, $(\tilde p_2)_-=(p_2')_+$, $(\tilde p_2)_+=(p_2)_+$,
$(\tilde p_3)_-=(p_3)_-$, $(\tilde p_3)_+=(p_3')_-$, $(\tilde p_4)_-=(p_4')_-$, $(\tilde p_4)_+=(p_4)_-$
(see Fig. \ref{fig:2}).

\begin{figure}[!ht]
  \begin{center}
   \input{figure2.pstex_t}

  \end{center}
  \caption{}\label{fig:2}
\end{figure}

The cycles $s_1^{-1}\tilde p_1 s_1' p_2' \tilde p_2 p_2^{-1}$,
$s_2 \tilde p_3 {s_2'}^{-1} \tilde p_2$ and $s_3^{-1} p_3^{-1} \tilde p_3 p'_3 s'_3 \tilde p_4$
 give rise to the following equalities in the group $G$:
$$u=\alpha t^\xi a^{\epsilon k} \beta a^{-k},~u=\gamma t^\xi \beta ~\mbox{ and }~u=a^{-k}\gamma a^{\epsilon k} t^\xi \delta.$$
Consequently, recalling that $H_{\lambda_0}$ is malnormal (Lemma \ref{lem:malnorm}) and that $t^\xi \notin H_{\lambda_0}$, we get
$$\beta a^k \beta^{-1} a^{-\epsilon k}=t^{-\xi}\gamma^{-1} \alpha t^{\xi} \in H_{\lambda_0} \cap t^{-\xi} H_{\lambda_0} t^\xi=\{1\},
~\mbox{ and} $$
$$a^{-\epsilon k} \gamma^{-1} a^k \gamma=t^\xi \delta \beta^{-1} t^{-\xi}  \in H_{\lambda_0} \cap t^\xi H_{\lambda_0} t^{-\xi}=\{1\}.$$

Thus \begin{equation}\label{eq:b-g-k} \beta a^k \beta^{-1}=a^{\epsilon k}~\mbox{ and }~  \gamma^{-1} a^k \gamma=a^{\epsilon k}
\end{equation} for some
$\beta=\beta(k), \gamma=\gamma(k) \in H_{\lambda_0}$ and $\epsilon=\epsilon(k),\xi=\xi(k) \in \{-1,1\}$.
Note that the proof works for any
sufficiently large $k$, therefore we can find two mutually prime positive integers $k,k'$ with the above properties
such that $\epsilon(k)=\epsilon(k')=\epsilon$ and $\xi(k)=\xi(k')=\xi$. Denote $\beta'=\beta(k')$ and $\gamma'=\gamma(k')$, then
$\gamma t^\xi \beta=u=\gamma' t^\xi \beta'$, implying
$$\gamma^{-1}\gamma'=t^{\xi}\beta {\beta'}^{-1} t^{-\xi} \in H_{\lambda_0} \cap t^\xi H_{\lambda_0} t^{-\xi}=\{1\}.$$ Hence
$\beta'=\beta$, $\gamma'=\gamma$,
\begin{equation} \label{eq:b-g-k'} \beta a^{k'} \beta^{-1}=a^{\epsilon k'} \mbox{ and }~ \gamma^{-1} a^{k'} \gamma=a^{\epsilon k'}.
\end{equation}

It remains to observe that since $k$ and $k'$ are mutually prime, the formulas \eqref{eq:b-g-k} and \eqref{eq:b-g-k'} together yield
$$\beta a \beta^{-1}=a^{\epsilon}~\mbox{ and }~  \gamma^{-1} a \gamma=a^{\epsilon},$$
q.e.d.
\end{proof}

\section{Small cancellation over relatively hyperbolic groups}\label{sec:smal_canc}
Let $G$ be a group generated by a subset $\mathcal{A} \subseteq G$ and let $\mathcal O$ be the set of all words in the alphabet
$\mathcal{A}^{\pm 1}$, that are trivial in $G$. Then $G$ has a presentation of the following form:
\begin{equation} \label{eq:G} G=\langle \mathcal{A}~\|~\mathcal{O}\rangle. \end{equation}
Given a symmetrized set of words $\mathcal{R}$
over the alphabet $\mathcal{A}$, consider the group $G_1$ defined by
\begin{equation}\label{eq:G_1} G_1=\langle \mathcal{A}~\|~\mathcal{O} \cup \mathcal{R}\rangle=
\langle G~\|~ \mathcal{R}\rangle.
\end{equation}

During the proof of the main result of this section we use presentations \eqref{eq:G_1}
(or, equivalently, the sets of additional relators $\mathcal R$) that satisfy
the {\it generalized small cancellation condition} $C_1(\varepsilon,\mu,\lambda,c,\rho)$. In the case of word hyperbolic groups
this condition was suggested by Ol'shanskii in \cite{Olsh2}, and was afterwards
generalized to relatively hyperbolic groups by Osin in \cite{Osin-SCT}. For the definition and detailed theory
we refer the reader to the paper \cite{Osin-SCT}, as we will only use the properties,
that were already established there. The following observation is an immediate consequence of the definition:

\begin{remark} \label{rem:C-1-C_1} Let the constants $\varepsilon_j,\mu_j,\lambda,c,\rho_j$, $j=1,2$, satisfy
$0 < \lambda \le 1$, $0 \le \varepsilon_1 \le \varepsilon_2$, $c \ge 0$, $0 < \mu_2 \le \mu_1$, $\rho_2 \ge \rho_1>0$.
If the presentation \eqref{eq:G_1} enjoys the condition $C_1(\varepsilon_2,\mu_2,\lambda,c,\rho_2)$ then it also enjoys the condition
$C_1(\varepsilon_1,\mu_1,\lambda,c,\rho_1)$.
\end{remark}

We will also assume that the reader is familiar with the notion of a {\it van Kampen diagram} over the group presentation \eqref{eq:G_1}
(see \cite[Ch. V]{L-S} or \cite[Ch. 4]{Olsh0}). Let $\Delta$ be such a diagram. A cell $\Pi$ of $\Delta$ is called an
$\mathcal R$-{\it cell} if the label of its boundary contour  $\partial \Pi$ (i.e., the word written on it starting with some
vertex in the counter-clockwise direction) belongs to $\mathcal R$.

Consider a simple closed path $o=rqr'q'$ in a diagram $\Delta$ over the presentation \eqref{eq:G_1}, such that $q$ is a
subpath of the boundary cycle of an $\mathcal R$-cell $\Pi$ and $q'$ is a subpath of $\partial \Delta$.
Let $\Gamma$ denote the subdiagram of $\Delta$ bounded by $o$. Assuming that $\Gamma$ has no holes, no
$\mathcal R$-cells and $\L(r),\L(r') \le \varepsilon$, it will be called an $\varepsilon$-{\it contiguity subdiagram}
of $\Pi$ to $\partial \Delta$. The ratio $\L(q)/\L(\partial \Pi)$ will be called the {\it contiguity degree} of $\Pi$ to $\partial \Delta$
and denoted $(\Pi,\Gamma,\partial \Delta)$.

A diagram is said to be {\it reduced} if it has a minimal number of $\mathcal R$-cells among all the diagrams
with the same boundary label.

If $G$ is a group hyperbolic relative to a family of proper subgroups $\{H_i\}_{i\in I}$, with a finite relative generating set $\cX$,
then $G$ is generated by the set $\mathcal{A}=\cX \cup \bigcup_{i \in I} (H_i \setminus \{1\})$, and the Cayley
graph $\Gamma(G,\mathcal{A})$ is a hyperbolic metric space \cite[Cor. 2.54]{Osin-RHG}.

As for every condition of small cancellation, the main statement of the theory is the following
analogue of Greendlinger's Lemma, claiming the existence of a cell, large part of whose contour lies on the boundary of
the van Kampen diagram.

\begin{lemma}[\cite{Osin-SCT}, Cor. 4.4] \label{lem:sm_canc_gamma-cell}
Suppose that the group $G$ is generated by a subset $\mathcal{A}$ such that the Cayley graph $\Gamma(G,\mathcal{A})$ is hyperbolic.
Then for any $0< \lambda \le 1$  there is $\mu_0 >0$ such that
for any $\mu \in (0,\mu_0]$ and $c \ge 0$ there are $\varepsilon_0 \ge 0$ and $\rho_0 >0$ with the following property.

Let the symmetrized presentation \eqref{eq:G_1} satisfy the $C_1(\varepsilon_0,\mu,\lambda,c,\rho_0)$-condition. Further,
let $\Delta$ be a reduced van Kampen diagram over $G_1$ whose boundary contour is $(\lambda,c)$-quasigeodesic in $G$.
 Then, provided $\Delta$ has an $\mathcal R$-cell,
there exists an $\mathcal R$-cell $\Pi$ in $\Delta$ and an $\varepsilon_0$-contiguity subdiagram
$\Gamma$ of $\Pi$ to $\partial \Delta$, such that
$$(\Pi,\Gamma,\partial \Delta) > 1-23\mu.$$
\end{lemma}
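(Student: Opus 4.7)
The plan is to prove this analogue of Greendlinger's lemma by adapting the classical small cancellation argument to the geometry of the hyperbolic Cayley graph $\Gamma(G,\mathcal{A})$. The overall strategy follows the scheme used in small cancellation theory over word hyperbolic groups and its extension to the relatively hyperbolic setting.

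First I would fix the constants carefully. Let $\delta$ be a hyperbolicity constant for $\Gamma(G,\mathcal{A})$. For the given $\lambda$, choose $\mu_0 > 0$ small enough that subwords of length $\mu_0\rho$ in a cyclic relator of length $\rho$ behave like geodesic segments up to $O(\delta)$-fellow travelling. Then, for $\mu \in (0,\mu_0]$ and $c \ge 0$, take $\rho_0$ large enough that $\mu\rho_0$ dominates both $c$ and the $(\lambda,c)$-quasigeodesic stability constants, and take $\varepsilon_0$ small compared with $\mu\rho_0$.

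Next I would organize the combinatorial data of the reduced van Kampen diagram $\Delta$ via a contiguity graph in the style of Osin: vertices are the $\mathcal{R}$-cells of $\Delta$ together with a single vertex representing $\partial\Delta$, and edges correspond to $\varepsilon_0$-contiguity subdiagrams. The $C_1(\varepsilon_0,\mu,\lambda,c,\rho_0)$ condition directly bounds the contiguity degree of any cell-to-cell edge by $\mu$. On the other hand, hyperbolicity of $\Gamma(G,\mathcal{A})$ together with reducedness of $\Delta$ and the $(\lambda,c)$-quasigeodesicity of $\partial\Delta$ yields the complementary statement that for every $\mathcal{R}$-cell $\Pi$, the sum of contiguity degrees from $\Pi$ to its neighbours in the contiguity graph is at least $1-O(\mu)$; essentially all of $\partial\Pi$ is covered by contiguity subdiagrams, with only a small defect controlled by $\varepsilon_0/\rho_0$.

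With these ingredients in place, the proof is by contradiction: assume that every $\mathcal{R}$-cell has every contiguity subdiagram to $\partial\Delta$ of degree $\le 1-23\mu$. Combining this upper bound with the $\mu$-bound on cell-to-cell contiguities and the $1-O(\mu)$ lower bound from the previous paragraph forces each cell to possess at least $\sim 22$ distinct contiguities to other cells, giving a tightly connected cluster of cells inside $\Delta$. The hard part of the argument is to show that such a cluster cannot exist: the $(\lambda,c)$-quasigeodesicity of $\partial\Delta$ together with hyperbolicity of $\Gamma(G,\mathcal{A})$ produces an ``outermost'' cell whose contiguity to $\partial\Delta$ must then exceed $1-23\mu$, contradicting the assumption. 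I expect this geometric bookkeeping -- tracking how the $(\lambda,c)$-defect, the $\varepsilon_0$-thickness of contiguity subdiagrams, and the $\mu$-contributions from neighbouring cells interact in a hyperbolic ambient graph -- to be the main obstacle; the specific constant $23$ comes out of optimising these error terms, exactly as in the proof of \cite[Cor.~4.4]{Osin-SCT}.
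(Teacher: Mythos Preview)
The paper does not prove this lemma: it is quoted as \cite[Cor.~4.4]{Osin-SCT} and used as a black box, with no argument given in the present paper. There is therefore nothing here to compare your proposal against. Your outline is a plausible high-level summary of the strategy behind such Greendlinger-type lemmas in the small cancellation theory over (relatively) hyperbolic groups, and you yourself acknowledge at the end that you are sketching the approach of \cite[Cor.~4.4]{Osin-SCT}; but as far as this paper is concerned, the statement is simply cited.
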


The main application of this particular small cancellation condition is
\begin{lemma}[\cite{Osin-SCT}, Lemmas 5.1 and 6.3] \label{lem:sm_canc_appl} For any $0 < \lambda \le 1$, $c \ge 0$ and $N>0$ there
exist $\mu_1 >0$, $\varepsilon_1 \ge 0$ and $\rho_1 >0$ such that for any symmetrized set of words $\mathcal{R}$ satisfying
$C_1(\varepsilon_1,\mu_1,\lambda,c,\rho_1)$-condition the following hold.

\begin{itemize}
\item[1.] The group $G_1$ defined by \eqref{eq:G_1} is hyperbolic relative to the collection of images $\{\eta(H_i)\}_{i\in I}$
under the natural homomorphism $\eta:G \to G_1$.
\item[2.] The restriction of $\eta$ to the subset of elements having length at most $N$ with
respect to $\mathcal A$ is injective.
\item[3.] Any element that has a finite order in $G_1$ is an image of an element of finite order in $G$.
\end{itemize}
\end{lemma}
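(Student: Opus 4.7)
The lemma is essentially a restatement of results already proved by Osin, so the honest plan is to invoke Lemmas 5.1 and 6.3 of \cite{Osin-SCT} directly; nevertheless, here is how I would organize a self-contained proof building on Lemma \ref{lem:sm_canc_gamma-cell}.

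The plan is to fix the parameters in the following order. First choose $\lambda$ and $c$; then pick $\mu_1 \in (0,\mu_0]$ small enough that $1-23\mu_1 > 3/4$ (say) and such that the bound from Lemma \ref{lem:sm_canc_gamma-cell} forces a definite fraction of every reducing $\mathcal R$-cell to lie near the boundary. Having fixed $\mu_1$, let $\varepsilon_0$ and $\rho_0$ be produced by Lemma \ref{lem:sm_canc_gamma-cell}; set $\varepsilon_1=\varepsilon_0$, and finally enlarge $\rho_0$ to some $\rho_1 > \max\{\rho_0,\,4N/(1-23\mu_1)\}$. By Remark \ref{rem:C-1-C_1}, the $C_1(\varepsilon_1,\mu_1,\lambda,c,\rho_1)$ condition is no weaker than the one needed to apply the Greendlinger-type lemma.

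For assertion 1 (relative hyperbolicity of $G_1$), I would start with any word $w$ over $\cX \cup \cH$ that represents the identity in $G_1$, build a reduced diagram $\Delta$ over presentation \eqref{eq:G_1} with $\mathrm{lab}(\partial \Delta)\equiv w$, and proceed by induction on the number of $\mathcal R$-cells. If $\Delta$ has no $\mathcal R$-cells it is a diagram over $G$, and the linear relative isoperimetric inequality for $G$ controls the $\mathcal O$-area. Otherwise Lemma \ref{lem:sm_canc_gamma-cell} yields an $\mathcal R$-cell $\Pi$ with a contiguity subdiagram $\Gamma$ such that a $(1-23\mu_1)$-fraction of $\partial \Pi$ runs along $\partial\Delta$ up to $\varepsilon_0$-error; cutting $\Pi$ and $\Gamma$ out replaces this portion of $\partial\Delta$ by a much shorter piece, which strictly decreases the length of the boundary (this is where $1-23\mu_1>3/4$ is used) while removing one $\mathcal R$-cell. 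Iterating gives a linear bound on the number of $\mathcal R$-cells in terms of $\|w\|$, and combining with the relative isoperimetric function of $G$ on the leftover $\mathcal O$-subdiagrams produces a linear relative isoperimetric inequality for $G_1$ over $\{\eta(H_i)\}$.

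For assertion 2 (injectivity on the ball of radius $N$), suppose $g \in G$ has $|g|_{\mathcal A}\le N$ and $\eta(g)=1$. Take a reduced diagram $\Delta$ with a geodesic boundary of length $\le N$ (here the boundary is automatically $(\lambda,c)$-quasigeodesic for any $\lambda \le 1$, $c \ge 0$, after a mild adjustment of $\lambda$). If $\Delta$ contained an $\mathcal R$-cell, Lemma \ref{lem:sm_canc_gamma-cell} would exhibit one with contiguity degree $>1-23\mu_1$ to $\partial\Delta$, forcing $(1-23\mu_1)\rho_1 \le \L(\partial\Delta)+2\varepsilon_0 \le N+2\varepsilon_0$, which contradicts the choice $\rho_1>4N/(1-23\mu_1)$. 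Hence $\Delta$ is a $G$-diagram and $g=1$ in $G$.

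For assertion 3 (torsion), if $g\in G$ has infinite order but $\eta(g)^n=1$, apply the same Greendlinger argument to a reduced annular (or suitably cut open disk) diagram whose boundary labels are $g^n$ and the empty word; the cells of large contiguity to one boundary component give, after cutting, a shorter relation $g^{n'}=1$ of the same form, contradicting infinite order of $g$ in $G$. The main obstacle throughout is coordinating the constants so that (i) the $C_1$-condition is strong enough to trigger Lemma \ref{lem:sm_canc_gamma-cell} on every reduced diagram that arises, and (ii) the boundaries one feeds into the lemma remain $(\lambda,c)$-quasigeodesic after each surgical step; this is exactly the delicate bookkeeping carried out in Osin's Lemmas 5.1 and 6.3, so in the write-up I would simply cite those.
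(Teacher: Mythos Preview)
Your opening sentence is exactly right: the paper does not prove this lemma at all --- it is stated as a direct citation of Osin's Lemmas~5.1 and~6.3 from \cite{Osin-SCT}, with no argument given. So your ``honest plan'' matches the paper's approach precisely.

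Your additional sketch is a reasonable outline of how Osin's argument goes for parts~1 and~2, but be aware that the torsion claim (part~3) is rougher than you indicate. Cutting out a high-contiguity $\mathcal R$-cell from a disk diagram with boundary label $g^n$ does not in any obvious way produce a relation of the form $g^{n'}=1$ with $n'<n$; the boundary after surgery is a word close to a subword of $g^n$ with a short piece of a relator spliced in, not a shorter power of $g$. Osin's actual proof of Lemma~6.3 in \cite{Osin-SCT} uses a more delicate argument tracking how the contiguity arc interacts with the periodic structure of the boundary and the quasigeodesic properties of the relators. Since you already say you would cite Osin for the details, this is not a defect in your write-up, but you should not advertise the sketch for part~3 as a genuine proof strategy without substantial further work.
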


Below is the principal lemma of this section that will later be used to prove Theorem \ref{thm:gp=out}.

\begin{lemma}\label{lem:add_rel} Assume that $G$ is a torsion-free group hyperbolic relative to a family of proper subgroups
$\{H_i\}_{i\in I}$, $\cX$ is a finite relative generating set of $G$, $S$ is a suitable subgroup of $G$ and $U \subset G$ is a finite subset.
Suppose that $i_0 \in I$, $a \in H_{i_0} \setminus \{1\}$ and $v_1,v_2 \in G$
are hyperbolic elements which are not commensurable to each other.
Then there exists a word $W(x,y)$ over the alphabet $\{x,y\}$ such that the
following is true.

Denote $w_1=W(a,v_1) \in G$, $w_2=W(a,v_2) \in G$, and let $\langle \langle w_2 \rangle \rangle$ be the normal closure of $w_2$ in $G$,
$G_1=G/\langle \langle w_2 \rangle \rangle$ and $\eta:G \to G_1$ be the natural epimorphism. Then
\begin{itemize}
\item $\eta$ is injective on $\Hl \cup U$ and $G_1$ is hyperbolic relative to the family $\{\eta(H_\lambda)\}_{\lambda \in \Lambda}$;
\item $\eta(S)$ is a suitable subgroup of $G_1$;
\item $G_1$ is torsion-free;
\item $\eta(w_1) \neq 1$.
\end{itemize}
\end{lemma}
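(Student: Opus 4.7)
The plan is to define $W(x,y)=\prod_{i=1}^n x^{k_i}y^{l}$ for a common large $l$ and large, sufficiently distinct positive integers $k_1<k_2<\dots<k_n$, all to be specified at the end once the small cancellation constants have been fixed. Before choosing the word, I would enlarge the peripheral structure via Lemma~\ref{lem:Eg}: since $v_1,v_2\in G^0$ are non-commensurable and $G$ is torsion-free, each $E_G(v_j)$ is infinite cyclic, and two applications of Lemma~\ref{lem:Eg}(2) make $G$ hyperbolic relative to $\Hl\cup\{E_G(v_1),E_G(v_2)\}$ as well. In this extended peripheral structure, the element
$$
w_j=a^{k_1}v_j^{l}a^{k_2}v_j^{l}\cdots a^{k_n}v_j^{l}
$$
is represented by an honest alternating word in $\cX\cup\cH$ whose $\cH$-letters are $a^{k_i}\in H_{i_0}$ and $v_j^l\in E_G(v_j)$. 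An iterated application of Corollary~\ref{cor:hyp-elts} ensures that, for $k_i$ and $l$ sufficiently large, each $w_j$ is hyperbolic of infinite order in $G$ (with respect to the original peripheral structure), and that every cyclic shift of $w_j^{\pm 1}$ lies in the set $\cW(\Omega,m)$ of Section~\ref{sec:comb_paths} with $m=7K$, where $\Omega,K$ come from Lemma~\ref{lem:omega}.

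The central step is to verify that, once the parameters are taken large enough, the symmetrized closure $\mathcal R$ of $w_2$ satisfies the generalized small cancellation condition $C_1(\varepsilon,\mu,\lambda,c,\rho)$ demanded by Lemma~\ref{lem:sm_canc_appl} with $N=\max_{u\in U}|u|_{\cX\cup\cH}+1$, so that item~2 of that lemma yields injectivity of $\eta$ on $\bigcup_{\lambda}H_\lambda\cup U$. Any long common subword shared by two distinct cyclic shifts of $w_2^{\pm 1}$ gives rise to a cycle $rqr'q'$ in $\Gamma(G,\cX\cup\cH)$ with $\lab(q),\lab(q')\in\cW(\Omega,m)$ and $\L(r),\L(r')\le 1$; Lemma~\ref{lem:conseq-reg-strong} then forces long runs of consecutive components to be connected, so that several distinct $a^{k_i}$-letters become conjugated to $a^{\pm k_j}$-letters. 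Combined with the malnormality of $H_{i_0}$ (Lemma~\ref{lem:malnorm}) and the distinctness of the $k_i$'s, this would force the two shifts to coincide. Consequently $\mathcal R$ has only short pieces, and Lemma~\ref{lem:sm_canc_appl} produces the first and third bullet points of the lemma.

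Showing $\eta(w_1)\ne 1$ is, in my view, the main obstacle, handled by a parallel combinatorial argument. Assuming the contrary, there is a reduced van Kampen diagram $\Delta$ over the presentation of $G_1$ with boundary label $w_1$, and this boundary is $(\lambda,c)$-quasigeodesic in $G$ for the chosen constants. Lemma~\ref{lem:sm_canc_gamma-cell} then yields an $\mathcal R$-cell $\Pi$ with contiguity degree $>1-23\mu$ to $\partial\Delta$; translated into the Cayley graph, this is a cycle $rqr'q'$ with $\lab(q)$ a long subword of a cyclic shift of $w_2^{\pm 1}$ and $\lab(q')$ a long subword of $w_1^{\mp 1}$, both belonging to $\cW(\Omega,m)$. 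Lemma~\ref{lem:conseq-reg-strong} then produces a long component-by-component matching: many $a^{k_i}$-letters of $w_2$ are connected (as $H_{i_0}$-components) to the corresponding $a^{\pm k_i}$-letters of $w_1$, while the intervening $v_2^{l}$-letters are connected to $v_1^{\pm l}$-letters. Transcribing these connections into equalities in $G$ and using Lemma~\ref{lem:malnorm}, the resulting relations are exactly of the shape covered by Lemma~\ref{lem:comm-spec}, which forces $v_1\stackrel{G}{\approx}v_2$, a contradiction.

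Finally, the suitability of $\eta(S)$ in $G_1$ follows the pattern of Osin's proof of Theorem~\ref{thm:main_SCT}: by Lemma~\ref{lem:suit-inf_many_non-comm} pick non-commensurable hyperbolic $g_1,g_2\in S^0$ with $E_G(g_1)\cap E_G(g_2)=\{1\}$; any putative commensurability between $\eta(g_1)$ and $\eta(g_2)$, or a non-trivial element of $E_{G_1}(\eta(g_i))$ not already lying in $E_G(g_i)$, would yield a van Kampen diagram to which Lemma~\ref{lem:sm_canc_gamma-cell} can be applied, producing an identification impossible in $G$. The delicate coordination of the exponents $k_i,l$ with the small cancellation constants supplied by Lemma~\ref{lem:sm_canc_appl} is the real combinatorial core of the argument.
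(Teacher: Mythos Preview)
Your overall architecture is right, but there are two genuine gaps and one place where you make the argument much harder than necessary.

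\textbf{The $\eta(w_1)\ne 1$ step.} Your claim that ``the intervening $v_2^{l}$-letters are connected to $v_1^{\pm l}$-letters'' is false by definition: two components can only be connected if they are $H_\lambda$-components for the \emph{same} $\lambda$. Since $v_1\stackrel{G}{\not\approx}v_2$, the peripheral subgroups $E_G(v_1)$ and $E_G(v_2)$ are distinct, so an $E_G(v_2)$-component can never be connected to an $E_G(v_1)$-component (nor to an $H_{i_0}$-component). This is precisely the contradiction: Lemma~\ref{lem:conseq-reg} forces two consecutive components of $q$ to be connected to components of $q'$, and one of those components of $q$ is an $E_G(v_2)$-component, while $q'$ has only $H_{i_0}$- and $E_G(v_1)$-components. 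The argument ends there; there is no need to transcribe equalities or invoke Lemma~\ref{lem:comm-spec}, whose hypotheses do not match this situation anyway.

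\textbf{Suitability of $\eta(S)$.} Your proposed van Kampen argument is not developed, and is unnecessarily hard. The paper's trick is to use Lemma~\ref{lem:suit-inf_many_non-comm} to pick $v_3,v_4\in S^0$ pairwise non-commensurable with $v_1,v_2$, and then to enlarge the peripheral structure to $\{H_i\}_{i\in I}\cup\{E_G(v_1),E_G(v_2),E_G(v_3),E_G(v_4)\}$ \emph{before} taking the quotient. Lemma~\ref{lem:sm_canc_appl} then makes $G_1$ hyperbolic relative to the images of all these subgroups, with $\eta$ injective on each. By Lemma~\ref{lem:malnorm} the images $\eta(E_G(v_3))$ and $\eta(E_G(v_4))$ are malnormal and have trivial pairwise intersection with conjugates of each other and of the $\eta(H_i)$; after stripping off the elementary peripheral subgroups via Lemma~\ref{lem:exhyp}, this forces $\eta(v_3),\eta(v_4)$ to be non-commensurable hyperbolic elements of $G_1$ with trivially intersecting elementary closures. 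Suitability of $\eta(S)$ is then immediate.

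\textbf{The small cancellation condition.} You try to verify $C_1(\varepsilon,\mu,\lambda,c,\rho)$ by hand using Lemma~\ref{lem:conseq-reg-strong}. This is not impossible, but your sketch only addresses the piece condition and says nothing about the relator being $(\lambda,c)$-quasigeodesic, which is part of the $C_1$ definition. The paper sidesteps this by using the specific word $W(x,y)=x^ky^kx^{k+1}y^{k+1}\cdots x^{k+n-1}y^{k+n-1}$ and citing \cite[Thm.~7.5]{Osin-SCT}, which directly guarantees $C_1(\varepsilon,\mu,1/3,2,2n)$ for words of this alternating form once $H_{i_0}\cap E_G(v_2)=\{1\}$ (true here by torsion-freeness and Lemma~\ref{lem:malnorm}). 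If you want to keep your word shape $\prod x^{k_i}y^l$, you must either adapt that theorem or supply the quasigeodesic estimate yourself.
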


\begin{proof} By Lemma \ref{lem:suit-inf_many_non-comm} there are hyperbolic elements $v_3,v_4 \in S$ such that
$v_i \stackrel{G}{\not\approx} v_j$ if $1 \le i<j \le 4$. Then by Lemma \ref{lem:Eg},
the group $G$ is hyperbolic relative to the finite collection of subgroups $\{H_i\}_{i\in I} \cup \bigcup_{j=1}^4 \{E_G(v_j)\}$,
and generated by the set $$\mathcal{A}=\cX \cup \left(\bigcup_{i \in I} H_i \cup \bigcup_{j=1}^4 E_G(v_j)\right)\setminus\{1\}.$$
Let $\Omega \subset G$ and $K\in \N$ denote the finite subset and the constant achieved after
an application of Lemma \ref{lem:omega} to this new collection of peripheral subgroups.

Define $m=7K$, $\lambda=1/3$, $c=2$ and $N=\max\{|u|_{\mathcal{A}}~:~u\in U\}+1$. Choose
$\mu_j>0$, $\varepsilon_j\ge 0$ and $\rho_j>0$, $j=0,1$, according to the claims of
Lemmas \ref{lem:sm_canc_gamma-cell} and \ref{lem:sm_canc_appl}.
Let $\varepsilon=\max\{\varepsilon_0,\varepsilon_1\}$, and let $L=L(C,d)>0$ be the constant given by Lemma \ref{lem:conseq-reg} where
$C=\varepsilon_0$ and $d=2$.
Evidently there exists $n \in \N$ such that, for $\mu=(3\varepsilon+11)/n$, one has
$$0<\mu \le \min\{\mu_0,\mu_1\},~ 2n(1-23 \mu)>L,~\mbox{ and }~ 2n>\max\{\rho_0,\rho_1\}.$$
Set
$$\mathcal{F}(\varepsilon)=\bigl\{ h \in \langle \Omega \rangle~:~|h| \le \max\{K(32\varepsilon+70),m\} \bigr\}.$$
Since the subset $\mathcal{F}(\varepsilon)$ is finite, we can find $k \in \N$ such that
$a^{k'},v_1^{k'}, v_2^{k'} \notin \mathcal{F}(\varepsilon)$ whenever $k' \ge k$.
Consider the word $$W(x,y)\equiv x^ky^kx^{k+1}y^{k+1} \dots x^{k+n-1} y^{k+n-1}.$$
Let $w_j\in G$ be the element represented by the word   $W(a,v_j)$ in $G$, $j=1,2$,
and let $\mathcal{R}$ be the set of all cyclic shifts of $W(a,v_2)$
and their inverses. By Lemma \ref{lem:malnorm}, $H_{i_0} \cap E_G(v_2)=\{1\}$ because $G$ is torsion-free, hence by
\cite[Thm. 7.5]{Osin-SCT} the presentation
\eqref{eq:G_1} satisfies the condition $C_1(\varepsilon,\mu,1/3,2,2n)$, and therefore,
by Remark \ref{rem:C-1-C_1}, it satisfies the conditions
 $C_1(\varepsilon_0,\mu,1/3,2,\rho_0)$ and $C_1(\varepsilon_1,\mu_1,1/3,2,\rho_1)$.

Observe that $w_1 \neq 1$ in $G$ because, otherwise, there would have existed a closed path $q$ in $\Gamma(G,\mathcal{A})$ labelled
by the word $W(a,v_1)$, and, by part (a) of Lemma \ref{lem:regul}, all components of $q$ would have been regular in
the cycle $o=rqr'q'$ (where $r,r',q'$ are trivial paths), which is obviously impossible.

Denote $G_1=G/\langle \langle w_2 \rangle\rangle$ and let $\eta: G \to G_1$ be the natural epimorphism.
Then, according to Lemma \ref{lem:sm_canc_appl}, the group $G_1$ is is torsion-free, hyperbolic relative to
$\{\eta(H_i)\}_{i\in I} \cup \bigcup_{j=1}^4 \{\eta(E_G(v_j))\}$ and $\eta$ is injective on the set
$\bigcup_{i\in I} H_i \cup \bigcup_{j=1}^4 E_G(v_j) \cup U$
(because the length in $\mathcal A$ of any element from this set is at most $N$).
Since any elementary group is word hyperbolic, $G_1$ is also
hyperbolic relative to $\{\eta(H_i)\}_{i\in I}$ (by Lemma \ref{lem:exhyp}) and $\eta(v_3),\eta(v_4) \in \eta(S)$
become hyperbolic elements of infinite order in $G_1$, that are not commensurable with each other (by Lemma \ref{lem:malnorm}).
Therefore $E_{G_1}(\eta(v_3)) \cap E_{G_2}(\eta(v_4))=\{1\}$ (recall that these
subgroups are cyclic by Lemma \ref{lem:Eg} and because $G_1$ is torsion-free), and, consequently, $\eta(S)$ is a suitable subgroup
of $G_1$.

Suppose that $\eta(w_1)=1$. By van Kampen's Lemma there exists a reduced planar diagram $\Delta$ over the presentation \eqref{eq:G_1}
with the word $W(a,v_1)$ written on its boundary. Since $W(a,v_1) \stackrel{G}{\neq} 1$, $\Delta$ possesses at least one $\mathcal{R}$-cell.
It was proved in \cite[Lemma 7.1]{Osin-SCT} that any path in $\Gamma(G,\mathcal{A})$ labelled by $W(a,v_1)$ is $(1/3,2)$-quasigeodesic,
hence we can apply Lemma \ref{lem:sm_canc_gamma-cell} to find an $\mathcal{R}$-cell $\Pi$ of $\Delta$ and an $\varepsilon_0$-contiguity
subdiagram $\Gamma$ (containing no $\mathcal R$-cells) between $\Pi$ and $\partial \Delta$ such that
$(\Pi,\Gamma,\partial \Delta)>1-23\mu$. Thus there exists a cycle $o=rqr'q'$ in $\Gamma(G,\mathcal{A})$ such that
$q$ is labelled by a subword of (a cyclic shift of) $W(a,v_2)$, $q'$ is labelled by a subword of (a cyclic shift of) $W(a,v_1)^{\pm 1}$,
$\L(r),\L(r') \le \varepsilon_0=C$ and $$\L(q) > (1-23 \mu) \cdot \L(\partial \Pi) = (1-23 \mu)\cdot 2n> L.$$
In particular, $\lab(q),\lab(q') \in \cW(\Omega,m)$. Therefore we can apply Lemma \ref{lem:conseq-reg} to find two consecutive components
of $q$ that are connected to some components of $q'$. Due to the form of the word $W(a,v_2)$, one of the formers will have to be an
$E_G(v_2)$-component, but $q'$ can have only $E_G(v_1)$- or $H_{i_0}$-components. This yields a contradiction because $E_G(v_2)\neq E_G(v_1)$
and $E_G(v_2) \neq H_{i_0}$. Hence $\eta(w_1) \neq 1$ in $G_1$, and the proof is complete.
\end{proof}

\section{Every group is a group of outer automorphisms of a (2CC)-group}\label{sec:every_gp=out}
\begin{lemma} \label{lem:hyp-spec-gen} There exists a word $R(x,y)$ over the two-letter alphabet $\{x,y\}$ such that
every non-elementary torsion-free word hyperbolic group $F_1$ has a non-elemen\-tary torsion-free word hyperbolic
quotient $F$ that is generated by two elements $a,b \in F$ satisfying
\begin{equation} \label{eq:Rab} R(a,b) \stackrel{F}{\neq} 1,~
R(a^{-1},b^{-1})\stackrel{F}{=} 1, ~R(b,a)\stackrel{F}{=} 1, ~R(b^{-1},a^{-1}) \stackrel{F}{=} 1.\end{equation}
\end{lemma}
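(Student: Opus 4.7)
My plan is to fix universally the Osin-style small-cancellation word
\[
R(x,y) = x^k y^k x^{k+1} y^{k+1} \cdots x^{k+n-1} y^{k+n-1}
\]
for integers $k,n$ to be chosen sufficiently large, and then, given $F_1$, to construct $F$ in two stages: first pass to a well-controlled 2-generated non-elementary torsion-free word hyperbolic quotient $F_0$ of $F_1$, and then impose the three desired relations via a simultaneous small cancellation quotient.

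For the first stage, I would pick in $F_1$ two elements $a_0, b_0$ freely generating a non-abelian subgroup (which exist in any non-elementary torsion-free word hyperbolic group). View $F_1$ as hyperbolic relative to $\{\{1\}\}$ by Remark~\ref{rem:free_prod_rel_hyp}(b); the free subgroup $S = \langle a_0, b_0 \rangle$ is suitable in $F_1$ by the argument of Lemma~\ref{lem:hyp_suit_sbgp_free_prod}. Apply Theorem~\ref{thm:main_SCT} with $G = F_1$, suitable subgroup $S$, $T$ a finite generating set of $F_1$, and $U$ a finite set comprising all freely reduced words in $a_0^{\pm 1}, b_0^{\pm 1}$ up to a length $N_0$ depending only on $k,n$. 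The quotient $F_0 = \eta(F_1)$ is then torsion-free, non-elementary, word hyperbolic, and generated by $a = \eta(a_0)$ and $b = \eta(b_0)$; injectivity of $\eta$ on $U$ ensures that $(a,b)$ behaves freely on a ball of radius $N_0$ in $F_0$.

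For the second stage, the four specializations $w_1 = R(a,b)$, $w_2 = R(a^{-1}, b^{-1})$, $w_3 = R(b,a)$, $w_4 = R(b^{-1}, a^{-1})$ are, by the choice of $N_0$, hyperbolic elements of infinite order in $F_0$ (by Corollary~\ref{cor:hyp-elts}) and pairwise non-commensurable (by Lemma~\ref{lem:comm-spec} applied with $\langle a\rangle$ as peripheral cyclic subgroup; the structural conclusion of that lemma is excluded by the free-like behavior of $(a,b)$). Form $F = F_0 / \langle\langle w_2, w_3, w_4 \rangle\rangle$. The symmetrized closure of $\{w_2, w_3, w_4\}$ satisfies the generalized small cancellation condition $C_1(\varepsilon,\mu,\lambda,c,\rho)$ of Section~\ref{sec:smal_canc}, by a direct extension of Theorem~7.5 of~\cite{Osin-SCT} to three pairwise non-commensurable relators. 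By Lemma~\ref{lem:sm_canc_appl}, $F$ is then torsion-free, non-elementary, word hyperbolic, and a 2-generated quotient of $F_1$ in which $R(a^{-1},b^{-1}) = R(b,a) = R(b^{-1},a^{-1}) = 1$.

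The principal obstacle is verifying that $R(a,b) \neq 1$ in $F$. By van~Kampen's lemma, a reduced diagram $\Delta$ for $R(a,b)$ over the presentation of $F$ would, by Lemma~\ref{lem:sm_canc_gamma-cell}, contain an $\mathcal R$-cell $\Pi$ with a contiguity subdiagram $\Gamma$ to $\partial\Delta$ of degree $>1-23\mu$. Translating to the Cayley graph of $F_0$ yields a cycle $o = rqr'q'$ in which $q$ is labeled by a long subword of a cyclic shift of some $w_i^{\pm 1}$, $i \in \{2,3,4\}$, and $q'$ by a long subword of a cyclic shift of $R(a,b)$. An application of Lemma~\ref{lem:conseq-reg} produces many consecutive pairs of connected components of $q$ and $q'^{-1}$, and the explicit form of $R$ -- with strictly increasing exponents, and with $w_1$ starting $a^k b^k$ while $w_2, w_3, w_4$ begin respectively with $a^{-k} b^{-k}$, $b^k a^k$, $b^{-k} a^{-k}$ -- prevents the required coincidence of signs and letter orderings, giving a contradiction. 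Making the choice of $k, n$ truly universal requires that the constants of Lemmas~\ref{lem:omega}, \ref{lem:sm_canc_gamma-cell}, and~\ref{lem:sm_canc_appl} be uniformly controllable across all $F_0$'s produced by the first stage; this is indeed the case because the free behavior of $(a,b)$ on a ball of radius $N_0$ controls the hyperbolicity data uniformly.
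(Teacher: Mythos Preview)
Your approach has a genuine circularity that breaks the universality required by the statement. The word $R(x,y)$ must be fixed \emph{before} $F_1$ is given, but in your construction the integers $k,n$ are governed by the constants of Lemmas~\ref{lem:omega}, \ref{lem:sm_canc_gamma-cell}, and~\ref{lem:sm_canc_appl} applied to the group $F_0$, and those constants depend on the hyperbolicity data of $F_0$ (its $\delta$ with respect to $\{a,b\}$, the finite set $\Omega$, the number $K$, etc.), which in turn depend on $F_1$. Your final paragraph asserts that ``the free behavior of $(a,b)$ on a ball of radius $N_0$ controls the hyperbolicity data uniformly'', but this is false: two word hyperbolic groups can agree on balls of arbitrarily large radius and yet have arbitrarily different hyperbolicity constants. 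There is no uniform bound available here, and without it you cannot choose $k,n$ independently of $F_1$.

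There are two secondary gaps as well. The ``direct extension of Theorem~7.5 of~\cite{Osin-SCT} to three pairwise non-commensurable relators'' is not immediate and would need an argument. And your verification that $R(a,b)\neq 1$ in $F$ via Lemma~\ref{lem:conseq-reg} is more delicate than you indicate: in Lemma~\ref{lem:add_rel} the contradiction comes from connected components lying in \emph{different} peripheral subgroups ($E_G(v_1)$ versus $E_G(v_2)$), whereas in your setup all four words $w_1,\dots,w_4$ have their components in the \emph{same} two peripheral subgroups $\langle a\rangle$ and $\langle b\rangle$, so you must compare the actual labels, and that comparison must survive the connecting paths whose labels are arbitrary elements of the peripheral subgroup.

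The paper avoids all of this by performing the small cancellation once and for all in the \emph{free} group $F(a,b)$: the word $R(x,y)=xy^{101}x^2y^{102}\cdots x^{100}y^{200}$ is chosen so that the symmetrization of $\{R(a,b),R(a^{-1},b^{-1}),R(b,a),R(b^{-1},a^{-1})\}$ satisfies classical $C'(1/8)$; then $\tilde F = F(a,b)/\langle\langle R(a^{-1},b^{-1}),R(b,a),R(b^{-1},a^{-1})\rangle\rangle$ is a fixed non-elementary torsion-free word hyperbolic group with $R(a,b)\neq 1$, entirely independent of $F_1$. Given $F_1$, one forms $\tilde F * F_1$ and applies Ol'shanskii's common-quotient theorem \cite[Thm.~2]{Olsh2} to obtain a non-elementary torsion-free hyperbolic $F$ onto which both $\tilde F$ and $F_1$ surject, with the image of $R(a,b)$ still nontrivial. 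This cleanly decouples the choice of $R$ from any data depending on $F_1$.
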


\begin{proof} Consider the word $$R(x,y) \equiv xy^{101} x^2 y^{102} \dots x^{100} y^{200}.$$
Denote by $F(a,b)$ the free group with the free generators $a,b$. Let
$$\mathcal{R}_1=\{R(a,b),R(a^{-1},b^{-1}), R(b,a),R(b^{-1},a^{-1})\},$$ and $\mathcal{R}_2$ be the set of all cyclic permutations of
words from $\mathcal{R}_1^{\pm 1}$. It is easy to see that the set $\mathcal{R}_2$ satisfies the classical small cancellation condition
$C'(1/8)$ (see \cite[Ch. V]{L-S}). Denote by $\tilde N$ the normal closure of the set
$$\mathcal{R}_3=\{R(a^{-1},b^{-1}), R(b,a),R(b^{-1},a^{-1})\}$$ in
$F(a,b)$. Since the symmetrization of $\mathcal{R}_3$ also satisfies $C'(1/8)$,
the group $\tilde F=F(a,b)/{\tilde N}$ is a torsion-free (\cite[Thm. V.10.1]{L-S})
word hyperbolic group (because it has a finite presentation for which the Dehn function is linear by \cite[Thm. V.4.4]{L-S}) such that
$$R(a,b) \stackrel{\tilde F}{\neq} 1~\mbox{ but }~
R(a^{-1},b^{-1})\stackrel{\tilde F}{=}R(b,a)\stackrel{\tilde F}{=}R(b^{-1},a^{-1}) \stackrel{\tilde F}{=} 1.$$
Indeed, if the word $R(a,b)$ were trivial in $\tilde F$ then,
by Greendlinger's Lemma \cite[Thm. V.4.4]{L-S}, it would contain more than a half of a relator from (the symmetrization of)
$\mathcal{R}_3$ as a subword, which would contradict the fact that $\mathcal{R}_2$ enjoys $C'(1/8)$.
The group $\tilde F$ is non-elementary because every torsion-free  elementary group is cyclic, hence, abelian, but in any
abelian group the relation $R(a^{-1},b^{-1})=1$ implies $R(a,b) = 1$.

Now, the free product $\tilde G=\tilde F * F_1$ is a torsion-free hyperbolic group. Its subgroups $\tilde F$ and $F_1$ are non-elementary,
hence, according to a theorem of Ol'shanskii \cite[Thm. 2]{Olsh2}, there exists a non-elementary torsion-free word hyperbolic
group $F$ and a homomorphism $\phi: \tilde G \to F$ such that $\phi(\tilde F)=\phi(F_1)=F$ and $\phi(R(a,b)) \neq 1$ in $F$. Therefore
$F$ is a quotient of $F_1$, the ($\phi$-images of the) elements $a,b$ generate $F$ and enjoy the required relations.
\end{proof}

We are now ready to prove Theorem \ref{thm:gp=out}.

\begin{proof}[Proof of Theorem \ref{thm:gp=out}.]  The argument will be similar to the one used to prove Theorem \ref{thm:ext-main}.

First, set $n=2$ and apply Lemma \ref{lem:ncc-normal} to find a countable torsion-free group $H$ and a normal subgroup
$M \lhd H$, where $H/M \cong C$ and $M$ has {\cc} (alternatively, one could start with a free group $H'$ and $M' \lhd H'$
such that $H'/M' \cong C$, and then apply Lemma \ref{lem:emb-non-simple} to the pair $(H',M')$ to obtain $H$ and $M$ with these
properties). Consider the word $R(x,y)$ and the torsion-free hyperbolic group $F$,
generated by the elements $a,b \in F$ which satisfy \eqref{eq:Rab}, given by
Lemma \ref{lem:hyp-spec-gen}. Denote $G(-2)=H*F$ and let $N(-2)$ be the normal closure of $\langle M, F \rangle$ in $G(-2)$, $F(-2)=F$,
${\mathfrak R}(-2)=\{R(a,b)\}$ -- a finite subset of $F(-2)$. By Lemma \ref{lem:hyp_suit_sbgp_free_prod},
$G(-2)$ will be hyperbolic relative to the
subgroup $H$, $G(-2)=H\cdot N(-2)$, $H \cap N(-2)=M$ and $F(-2)$ will be a suitable subgroup of $G(-2)$.

The element $a \in F(-2)$ will be hyperbolic in $G(-2)$ and since the group $G(-2)$ is torsion-free, the maximal elementary subgroup
$E_{G(-2)}(a)$ will be cyclic generated by some element $h_{-2}x_{-2}$, where $h_{-2} \in H$, $x_{-2} \in N(-2)$.

Choose $y_{-2} \in M$ so that $h_{-2}y_{-2} \neq 1$.
By Lemmas \ref{lem:Eg} and \ref{lem:HNN-rel_hyp}, the HNN-extension
$$G(-3/2)=\langle G(-2),t_{-1}~\|~t_{-1}h_{-2}x_{-2}t_{-1}^{-1}=h_{-2}y_{-2}\rangle$$
is hyperbolic
relative to $H$. As in proof of Theorem \ref{thm:ext-main}, one can verify that $F(-3)$ is a suitable subgroup of $G(-3/2)$, and apply
Theorem \ref{thm:main_SCT} to find an epimorphism $\eta_{-2}:G(-3/2) \to G(-1)$ such that $G(-1)$ is a torsion-free group
hyperbolic relative to $\eta_{-2}(H)$, $\eta_{-2}$ is injective on $H \cup {\mathfrak R}(-2)$ and $\eta_{-2}(t_{-1}) \in F(-1)$ where
$F(-1)=\eta_{-2}(F(-2))$ is a suitable subgroup of $G(-1)$. Hence $\eta_{-2}(G(-2))=G(-1)$ as $G(-3/2)$ was generated by $G(-2)$ and $t_{-1}$.

Denote $N(-1)=\eta_{-2}(N(-2))$,  ${\mathfrak R}(-1)=\eta_{-2}({\mathfrak R}(-2))$ and
$\psi_{-2}=\left.{\eta_{-2}}\right|_{G(-2)}:G(-2) \twoheadrightarrow G(-1)$. One can show that $G(-1)=H\cdot N(-1)$ and $H \cap N(-1)=M$
using the same arguments as in the proof of Theorem \ref{thm:ext-main}.
According to the construction, we have
$$\eta_{-2}(t_{-1})\eta_{-2}(a) \eta_{-2}(t_{-1}^{-1}) =\eta(t_{-1}at_{-1}^{-1}) \in N(-1) \cap H = M$$ in $G(-1)$, therefore, since the conjugation by
$\eta_{-2}(t_{-1})$ is an inner automorphism of $F(-1)$, we can assume that $F(-1)$ is generated by $a_{-1}$ and $b_{-1}$,
where $a_{-1} \in M$ and $R(a_{-1},b_{-1}) \neq 1$ in $F(-1)$ (because $\eta_{-2}(R(a,b)) \neq 1$ in $F(-1)$).

Now, if $b_{-1}$ is not a hyperbolic element of $G(-1)$, i.e., if $b_{-1} \stackrel{G(-1)}{\sim} c$ for some $c \in H$,
then $c \in N(-1) \cap H=M$, and since $M$ has {\cc} we can find $s_{-1} \in G(-1)$ such that $b_{-1}=s_{-1}a_{-1} s_{-1}^{-1}$.
In this case we define $G(0)=G(-1)$, $N(0)=N(-1)$, $F(0)=F(-1)$, ${\mathfrak R}(0)={\mathfrak R}(-1)$, $a_0=a_{-1}$, $s_0=s_{-1}$ and $\psi_{-1}=id_{G(-1)}$.

Otherwise, if $b_{-1}$ is hyperbolic in $G(-1)$, then we construct the group $G(0)$, and an epimorphism $\psi_{-1}:G(-1) \to G(0)$
in an analogous way, to make sure that $\eta_{-1}$ is injective on $H \cup {\mathfrak R}(-1)$, $G(0)$ torsion-free and
hyperbolic relative to (the image of) $H$, $F(0)=\psi_{-1}(F(-1))$ is a suitable subgroup of $G(0)$, $G(0)=H \cdot N(0)$ and
$H \cap N(0)=M$ where $N(0)=\psi_{-1}(N(-1))$, and $b_{0}=s_{0}a_{0} s_{0}^{-1}$ in $G(0)$ where
$b_0=\psi_{-1}(b_{-1})$, $a_0=\psi_{-1}(a_{-1})$ for some $s_0 \in G(0)$

Enumerate all elements of $N(0)$: $\{g_0,g_1,g_2,\dots\}$, and
of $G(0)$: $\{q_0,q_1,q_2,\dots\}$, so that $g_0=q_0=1$.

The groups $G(j)$ together with $N(j) \lhd G(j)$, $F(j) \le G(j)$, finite subsets ${\mathfrak R}(j) \subset G(j)$,
and elements $a_j,s_j \in G(j)$,
$j=1,2,\dots$, that we will construct shall satisfy the following properties:

\begin{itemize}
\item[$1^\circ$.] for each $j\in \N$ there is an epimorphism $\psi_{j-1}: G(j-1) \to G(j)$ which is injective on
$H\cup {\mathfrak R}(j-1)$. $F(j)=\psi_{j-1}(F(j-1))$, $N(j)=\psi_{j-1}(N(j-1))$, $a_j=\psi_{j-1}(a_{j-1}) \in M$,
$s_j=\psi_{j-1}(s_{j-1}) \in G(j)$;
\item[$2^\circ$.] $G(j)$ is torsion-free and hyperbolic relative to (the image of) $H$, and $F(j) \le G(j)$
is a suitable subgroup generated by $a_j$ and $s_ja_js_j^{-1}$;
\item[$3^\circ$.] $G(j)=H\cdot N(j)$, $N(j)\lhd G(j)$ and $H \cap N(j)=M$;
\item[$4^\circ$.] the natural image $\bar g_j$ of $g_j$  in $G(j)$ belongs to $F(j)$;
\item[$5^\circ$.] there exists $z_j \in H$ such that $\bar q_j \stackrel{G(j)}{\sim} z_j$, where $\bar q_j$
is the image of $q_j$ in $G(j)$;
\item[$6^\circ$.] if $j \ge 1$, $\bar q_{j-1} \in G(j-1)\setminus H$ and for each $\hat k \in \N$ there is $k \ge \hat k$ such that
$a_{j-1}^ks_{j-1}a_{j-1}^ks_{j-1}^{-1} \stackrel{G(j-1)}{\not \approx} a_{j-1}^k \bar q_{j-1}a_{j-1}^k \bar q_{j-1}^{-1}$,
then 
there is a word $R_{j-1}(x,y)$ over the two-letter alphabet $\{x,y\}$ which satisfies
$${\mathfrak R}(j) \ni \psi_{j-1}\left(R_{j-1}(a_{j-1},s_{j-1}a_{j-1}s_{j-1}^{-1}) \right) \neq 1~\mbox{ and }~$$
$$\psi_{j-1}\left(R_{j-1}(a_{j-1},\bar q_{j-1}a_{j-1}\bar q_{j-1}^{-1})\right) {=} 1~\mbox{ in } G(j).$$
\end{itemize}

Suppose that the groups $G(0),\dots, G(i)$ have already been defined. The group $G(i+1)$ will be constructed in three steps.

First, assume that $\bar q_i \in G(i) \setminus H$ and for each $\hat k \in \N$ there is $k \ge \hat k$ such that
$a_{i}^ks_{i}a_{i}^ks_{i}^{-1} \stackrel{G(i)}{\not \approx} a_{i}^k \bar q_{i}a_{i}^k \bar q_{i}^{-1}$. Observe that $s_i \notin H$
because, otherwise, one would have  $F(i) \subset H$, which is impossible as $F(i)$ is suitable in $G(i)$. Therefore, by Corollary
\ref{cor:hyp-elts}, we can suppose that $k$ is so large that the elements $v_1=a_{i}^ks_{i}a_{i}^ks_{i}^{-1}$ and
$v_2=a_{i}^k \bar q_{i}a_{i}^k \bar q_{i}^{-1}$ are hyperbolic in $G(i)$. Applying Lemma \ref{lem:add_rel} we can find a word
$W(x,y)$ over $\{x,y\}$ such that the group $G(i+1/3)=G(i)/\langle \langle W(a_i,v_2) \rangle\rangle$ and the
natural epimorphism $\eta: G(i) \to G(i+1/3)$ satisfy the following: $\eta$ is injective on $H \cup \mathfrak{R}(i)$,
$G(i+1/3)$ is torsion-free and hyperbolic relative to (the image of) $H$, $\eta(F(i)) \le G(i+1/3)$ is a suitable subgroup,
and $\eta(W(a_i,v_1)) \neq 1$. Define the word $R_{i}(x,y)\equiv W(x,x^ky^k)$. Then
$R_{i}(a_i,s_ia_is_i^{-1})=W(a_i,v_1)$, $R_i(a_i,\bar q_ia_i \bar q_i^{-1})=W(a_i,v_2)$ in $G(i)$, hence
$$\eta\left(R_{i}(a_i,s_ia_is_i^{-1})\right) \neq 1~\mbox{ and }~ \eta\left(R_i(a_i,\bar q_ia_i \bar q_i^{-1})\right)=1 ~\mbox{ in } G(i+1/3).$$

If, on the other hand, $\bar q_i \in H$ or there is $\hat k \in \N$ such that for every $k \ge \hat k$ one has
$a_{i}^ks_{i}a_{i}^ks_{i}^{-1} \stackrel{G(i)}{\approx} a_{i}^k \bar q_{i}a_{i}^k \bar q_{i}^{-1}$, then we define $G(i+1/3)=G(i)$,
$\eta: G(i) \to G(i+1/3)$ to be the identical homomorphism and $R_i(x,y)$ to be the empty word.

Let $\hat g_{i+1}$ and $\hat q_{i+1}$ denote the images of $g_{i+1}$ and $q_{i+1}$ in $G(i+1/3)$, $\hat N(i)=\eta(N(i))$, $\hat F(i)=\eta(F(i))$
and $\hat {\mathfrak{R}}(i)=\eta\left(\mathfrak{R}(i) \cup \{R_{i}(a_i,s_ia_is_i^{-1})\}\right)$. Then, using $3^\circ$,
we get  $G(i+1/3)=H\cdot \hat N(i)$ and $H \cap \hat N(i)=M$ because $\ker(\eta)\le N(i)$ (as $a_i, \bar q_ia_i \bar q_i^{-1} \in N(i)$).

Now we construct the group $G(i+2/3)$ in exactly the same way as the group $G(i+1/2)$ was constructed in during the
proof of Theorem \ref{thm:ext-main}.

If for some $f \in G(i+1/3)$, $f\hat q_{i+1} f^{-1}=z \in H$, then
set $G(i+2/3)=G(i)$, $K_{i+1}=\hat N(i)\lhd G(i+2/3)$ and $t_{i+1}=1$.

Otherwise, $\hat q_{i+1}$ is a hyperbolic element of infinite order in $G(i+1/3)$. Since $G(i+1/3)$ is torsion-free, one has
$E_{G(i+1/3)}(\hat q_{i+1}) = \langle h x \rangle$ for some $h \in H$ and $x \in \hat N(i)$, and there is $m \in \Z$ such
that $\hat q_{i+1}=(hx)^m$.
Now, by Lemma \ref{lem:Eg}, $G(i+1/3)$ is hyperbolic relative to $\{H,\langle hx \rangle\}$. Choose $y \in M$ so that
$hy \neq 1$ and let $G(i+2/3)$ be the following HNN-extension of $G(i+1/3)$:
$$G(i+2/3) = \langle G(i+1/3),t_{i+1}~\|~t_{i+1} (hx) t_{i+1}^{-1} = hy\rangle. $$
The group $G(i+2/3)$ is torsion-free and hyperbolic relative to $H$ by Lemma \ref{lem:HNN-rel_hyp}. One can show that $\hat F(i)$
is a suitable subgroup of $G(i+2/3)$ in the same way as during the proof of Theorem \ref{thm:ext-main}.
Lemma \ref{lem:HNN-univer} assures that $H \cap K_{i+1}=M$ where $K_{i+1} \lhd G(i+2/3)$ is the normal closure of
$\langle \hat N(i),t_{i+1} \rangle$ in $G(i+2/3)$. Finally,  note that
$$t_{i+1} \hat q_{i+1} t_{i+1}^{-1}=t_{i+1} (hx)^m t_{i+1}^{-1}=(hy)^m = z \in H~\mbox{ in } G(i+2/3).$$

Define $T_{i+1}=\{\hat g_{i+1}, t_{i+1}\}\subset K_{i+1}$. The group $G(i+1)$ is constructed from $G(i+2/3)$ as follows.
Since $T_{i+1} \cdot \hat F(i) \subset K_{i+1} \lhd G(i+2/3)$,
we can apply Theorem \ref{thm:main_SCT} to find a group $G(i+1)$ and an epimorphism $\varphi_i:G(i+2/3) \to G(i+1)$ such that
$\varphi_i$ is injective on $H \cup \hat{\mathfrak{R}}(i)$, $G(i+1)$ is torsion-free and hyperbolic relative to (the image of) $H$,
$\{\varphi_i(\hat g_{i+1}), \varphi_i(t_{i+1})\} \subset \varphi_i(\hat F(i))$, $\varphi_i(\hat F(i))$ is a suitable subgroup of $G(i+1)$,
and $\ker(\varphi_i) \le K_{i+1}$. Denote by $\psi_i:G(i) \to G(i+1)$ the
composition $\varphi_i \circ \eta$. Then $\psi_i(G(i))=\varphi_i(G(i))=G(i+1)$ because $G(i+2/3)$ was generated by $G(i)$ and
$t_{i+1}$, and according to the construction,
$t_{i+1} \in \varphi_i(\hat F(i))\le \varphi_i(G(i))$. Now, after defining $F(i+1)=\psi_{i}(F(i))$, $N(i+1)=\psi_{i}(N(i))$,
$\mathfrak{R}(i+1)=\varphi_i(\hat{\mathfrak{R}}(i))$,
$\bar g_{i+1}=\varphi_i(\hat g_{i+1}) \in F(i+1)$ and $z_{i+1}=\varphi_i(z) \in H$, we see that the conditions
$1^\circ$ - $5^\circ$ hold in the case when $j=i+1$, as in the proof of Theorem \ref{thm:ext-main}.
The last property $6^\circ$ follows from the way we constructed the group $G(i+1/3)$.

Let $Q=G(\infty)$ be the direct limit of the sequence $(G(i),\psi_i)$ as $i \to \infty$, and let $F(\infty)$ and $N=N(\infty)$ be the limits
of the corresponding subgroups. Let $a_\infty$, $b_\infty$ and $s_\infty$ be the images of $a_0$, $b_0$ and $s_0$ in $Q$ respectively.
Then $b_\infty=s_\infty a_\infty s_\infty^{-1}$, $Q$ is torsion-free by $2^\circ$,
$N\lhd Q$, $Q=H \cdot N$ and $H \cap N=M$ by $3^\circ$, $N \le F(\infty)$ by $4^\circ$. Hence $Q/N \cong H/M \cong C$.

Since $F(0) \le N(0)$ we get $F(\infty) \le N$.
Thus $N=F(\infty)$ is a homomorphic image of $F(0)=F$, and, consequently, it is a quotient of $F_1$.
By $5^\circ$, for any $q \in N$ there are $z \in H$ and $p \in Q$ such that $pqp^{-1}=z$. Consequently $z \in H\cap N=M$.
Choose $x \in N$ and $h \in H$ so that
$p=hx$. Since $M$ has {\cc} and $h^{-1}zh \in M$,  there is $y \in M$ such that $yh^{-1}zhy^{-1}=z$, therefore
$(yx) q (yx)^{-1}=z \in M$ and $yx \in MN =N$. Hence each element $q$ of $N$ will be conjugated (in $N$) to an element
of $M$, and since $M$ has {\cc}, therefore the group $N$ will also have {\cc}.

The property that $C_Q(N)=\{1\}$ can be established in the same way as in Theorem \ref{thm:ext-main}.
Therefore the natural homomorphism $Q \to Aut(N)$ is injective. It remains to show that it is surjective, that is for every
$\phi \in Aut(N)$ there is $g \in Q$ such that $\phi(x)=gxg^{-1}$ for every $x\in N$. Since all non-trivial elements of $N$
are conjugated, after composing $\phi$ with an inner automorphism of $N$, we can assume that $\phi(a_\infty)=a_\infty$.
On the other hand, there exist $q_\infty \in N$ and $i \in \N$ such that $\phi(b_\infty)=q_\infty a_\infty q_\infty^{-1}$ and
$q_\infty$ is the image of $q_i$ in $Q$. Note that $s_\infty \notin H$ because $s_i \in G(i)\setminus H$ for every $i \in \N$. This implies
that $H$ is a proper subgroup of $N$, thus $q_\infty \notin H$ since
$N=F(\infty)= \langle a_\infty, q_\infty a_\infty q_\infty^{-1}\rangle \le Q$, and $a_\infty \in H$. Hence $\bar q_i \in G(i)\setminus H$.

Now we have to consider two possibilities.

Case 1: for each $\hat k \in \N$ there is $k \ge \hat k$ such that
$$a_{i}^ks_{i}a_{i}^ks_{i}^{-1} \stackrel{G(i)}{\not \approx} a_{i}^k \bar q_{i}a_{i}^k \bar q_{i}^{-1}.$$
Then there is a word $R_i(x,y)$ such that the property $6^\circ$ holds for $j=i+1$. And, since
each $\psi_j$ is injective on $\{1\} \cup \mathfrak{R}_j$ (by $2^\circ$), we conclude that
$$R_i(a_\infty,s_\infty a_\infty s_\infty^{-1})\neq 1~\mbox{ and }~
R_i(a_\infty,q_\infty a_\infty q_\infty^{-1})=1~\mbox{ in } Q,$$
which contradicts the injectivity of $\phi$. Hence Case 1 is impossible.

Case 2: the assumptions of Case 1 fail. Then we can use Lemma \ref{lem:comm-spec} to find $\beta,\gamma \in H$ and
$\epsilon,\xi \in \{-1,1\}$ such that $\bar q_i=\gamma s_i^\xi \beta$, $\beta a_i \beta^{-1} =a_i^\epsilon$ and
$\gamma^{-1} a_i \gamma =a_i^\epsilon$ in $G(i)$. Denote by $\gamma_\infty$ the image $\gamma$ in $Q$, and for any $y \in Q$ let
$C_{y}$ be the automorphism of $N$ defined by
$C_{y}(x)=y x y^{-1}$ for all $x \in N$.

If $\xi=-1$ then $\gamma_\infty^{-1} a_\infty \gamma_\infty = a_\infty^\epsilon$ and
$\phi(b_\infty)=q_\infty a_\infty q_\infty^{-1}=\gamma_\infty s_\infty^{-1} a_\infty^\epsilon s_\infty \gamma_\infty^{-1}$,
hence $$Aut(N) \ni C_{s_\infty \gamma_\infty^{-1}} \circ \phi: \left\{ \begin{array}{rcl}
a_\infty  & \mapsto & s_\infty a_\infty^\epsilon s_\infty^{-1}=b_\infty^\epsilon \\
b_\infty=s_\infty a_\infty s_\infty^{-1} & \mapsto & a_\infty^\epsilon
\end{array} \right. .$$
But $N$ has no such automorphisms because $R(a_\infty,b_\infty) \neq 1$ and $R(b_\infty^\epsilon,a_\infty^\epsilon) = 1$
in $N$ (since $N$ is a quotient of $F$ and $1\neq R(a_0,b_0) \in \mathfrak{R}(0)$ in $G(0)$).

Therefore $\xi =1$. Similarly, $\epsilon=1$, as otherwise we would obtain a contradiction with the fact that
$R(a_\infty^{-1},b_\infty^{-1})=1$ in $N$. Thus
$$Aut(N) \ni C_{\gamma_\infty^{-1}} \circ \phi: \left\{ \begin{array}{rcl}
a_\infty  & \mapsto &  a_\infty \\
b_\infty=s_\infty a_\infty s_\infty^{-1} & \mapsto & s_\infty a_\infty s_\infty^{-1}=b_\infty
\end{array} \right. .$$
And since $a_\infty$ and $b_\infty$ generate $N$ we conclude that for all $x \in N$,  $\phi(x)= g x g^{-1}$, where
$g=\gamma_\infty \in Q$. Thus the natural homomorphism from $Q$ to $Aut(N)$ is bijective, implying that
$Out(N)=Aut(N)/Inn(N) \cong Q/N \cong C$. Q.e.d.
\end{proof}

\end{document}